\documentclass[12pt,twoside]{amsart}
\usepackage{amsmath, amsthm, amscd, amsfonts, amssymb, graphicx}
\usepackage[bookmarksnumbered, plainpages]{hyperref}

\textwidth 16 cm \textheight 22 cm

\oddsidemargin 1.0cm \evensidemargin 1.0cm

\setcounter{page}{1}


\newcommand{\lmd}{\lambda}

\newcommand{\og}{\omega}

\newcommand{\tr}{\text{tr}}
\newcommand{\therm}{\text{Herm}}
\newcommand{\tend}{\text{End}}
\newcommand{\lag}{\langle}
\newcommand{\rag}{\rangle}

\newtheorem{thm}{Theorem}[section]
\newtheorem{cor}[thm]{Corollary}
\newtheorem{lem}[thm]{Lemma}
\newtheorem{prop}[thm]{Proposition}
\newtheorem{defn}[thm]{Definition}

\numberwithin{equation}{section}


\begin{document}

\title{\bf Higgs bundles over non-compact Gauduchon manifolds}
\author{Chuanjing Zhang, Pan Zhang and Xi Zhang}

\thanks{{\scriptsize
\hskip -0.4 true cm \textit{2010 Mathematics Subject Classification:}
53C07; 14J60; 32Q15.
\newline \textit{Key words and phrases:} Higgs bundles; Gauduchon manifold; approximate Hermitian-Einstein structure; Hermitian-Einstein metric; non-compact}}

\maketitle

\begin{abstract}
In this paper, we prove a generalized Donaldson-Uhlenbeck-Yau theorem on Higgs bundles over a class of non-compact Gauduchon manifolds.
\end{abstract}

\vskip 0.2 true cm


\pagestyle{myheadings}
\markboth{\rightline {\scriptsize C. Zhang et al.}}
         {\leftline{\scriptsize Higgs bundles over non-compact Gauduchon manifolds}}

\bigskip
\bigskip


\section{ Introduction}

Let $X$ be a  complex manifold of dimension $n$ and $g$ a Hermitian metric with associated K\"{a}hler form $\omega$. The metirc $g$ is called Gauduchon if $\omega$ satisfies $\partial \bar{\partial} \omega^{n-1}=0$.  A Higgs bundle $(E , \overline{\partial }_{E}, \theta )$ over $X$ is a holomorphic bundle $(E , \overline{\partial }_{E})$ coupled with a Higgs field $\theta \in \Omega_X^{1,0}(\mathrm{End}(E))$
such that $\overline{\partial}_{E}\theta =0$ and $\theta \wedge \theta =0$.
Higgs bundles were introduced by Hitchin (\cite{HIT}) in his study of the self duality
equations. They have  rich structures and play an important role in many  areas
including gauge theory, K\"ahler and hyperk\"ahler geometry, group representations
and nonabelian Hodge theory. Let $H$ be a Hermitian metric on the bundle $E$, we consider the Hitchin-Simpson connection
 \begin{equation*}
 \overline{\partial}_{\theta}:=\overline{\partial}_{E}+\theta , \quad D_{H,  \theta }^{1, 0}:=D_{H }^{1, 0}  +\theta^{*_H}, \quad D_{H,  \theta }= \overline{\partial}_{\theta}+ D_{H,  \theta }^{1, 0},
 \end{equation*}
where $D_{H}$ is the Chern connection of $(E,\overline{\partial }_{E},H)$ and $\theta^{*_H}$ is the adjoint of $\theta $ with respect to the metric $H$.
The curvature of this connection is
\begin{equation*}
F_{H,\theta}=F_H+[\theta,\theta^{*_H}]+\partial_H\theta+\bar{\partial}_E\theta^{*_H},
\end{equation*}
where $F_H$ is the curvature of $D_{H}$ and $\partial_H$ is the $(1, 0)$-part of $D_{H}$.  $H$  is said to be
a Hermitian-Einstein metric on Higgs bundle $(E, \overline{\partial }_{E}, \theta  )$ if the
curvature  of the Hitchin-Simpson connection
  satisfies the Einstein condition, i.e.
\begin{equation*}
\sqrt{-1}\Lambda_{\omega} (F_{H} +[\theta , \theta^{*_H}])
=\lambda\cdot \mathrm{Id}_{E},
\end{equation*}
where  $\Lambda_{\omega }$ denotes the contraction with  $\omega $, and $\lambda $ is a constant.

When the base space $(X, \omega)$ is a compact K\"ahler manifold,
the stability of Higgs bundles, in the sense of Mumford-Takemoto, was a well established concept. Hitchin (\cite{HIT}) and Simpson (\cite{SIM}, \cite{SIM2}) obtained a Higgs
bundle version of the Donaldson-Uhlenbeck-Yau theorem (\cite{NS65}, \cite{DON85}, \cite{UY86}), i.e. they proved that a Higgs bundle
admits the Hermitian-Einstein metric if and only if it's Higgs poly-stable. Simpson (\cite{SIM}) also considered some non-compact K\"ahler manifolds case, he introduced the concept of analytic stability for Higgs bundle, and proved that the analytic stability implies the existence of Hermitian-Einstein metric. There
are  many other interesting and important works related (\cite{ag,Bi,bis,br,DW,JZ, LN2, LZ, LZZ, LZZ2, Mo1, Mo2, Mo3,NR01,WZ}, etc.). The non-K\"ahler  case is also very interesting. The Donaldson-Uhlenbeck-Yau theorem is  valid for compact Gauduchon manifolds (see \cite{Bu, LY, LT,LT95}).

In this paper, we want to study the non-compact and non-K\"ahler case. In the following, we always suppose that $(X, g)$ is a Gauduchon manifold unless otherwise stated.
By \cite{SIM}, we will make the following three assumptions:

\medskip

{\bf Assumption 1.} $(X, g)$ has finite volume.

{\bf Assumption 2.} There exists a non-negative exhaustion function $\phi$ with $\sqrt{-1}\Lambda_{\omega }\partial \bar{\partial} \phi$ bounded.

{\bf Assumption 3.} There is an increasing function $a:[0,+\infty)\rightarrow [0,+\infty)$ with $a(0)=0$ and $a(x)=x$ for $x>1$, such that if $f$ is a bounded positive function on $X$ with $\sqrt{-1}\Lambda_{\omega } \partial \bar{\partial} f\geq -B$ then
$$\sup_X|f|\leq C(B)a(\int_X|f|\frac{\omega^{n}}{n!}).$$
Furthermore, if $\sqrt{-1}\Lambda_{\omega } \partial \bar{\partial} f\geq 0$, then $\sqrt{-1}\Lambda_{\omega } \partial \bar{\partial} f=0$.

\medskip

We fix a background metric $K$ in the bundle $E$, and suppose that
\begin{equation*}
\sup_X|\Lambda_{\omega} F_{K,\theta}|_K<+\infty.
\end{equation*}
Define the analytic degree of $E$ to be the real number
\begin{equation*}
\textmd{deg}_{\omega}(E,K)=\sqrt{-1}\int_X\textmd{tr}(\Lambda_{\omega} F_{K,\theta})\frac{\omega^{n}}{n!}.
\end{equation*}
According to the Chern-Weil formula with respect to the metric $K$ (Lemma 3.2 in \cite{SIM}), we can define the analytic degree of any saturated sub-Higgs sheaf $V$ of $(E , \overline{\partial }_{E}, \theta )$ by
\begin{equation} \label{cw}
\textmd{deg}_{\omega}(V,K)=\int_X \sqrt{-1}\textmd{tr}(\pi \Lambda_{\omega} F_{K,\theta})-|\overline{\partial }_{\theta}\pi |_{K}^{2}\frac{\omega^{n}}{n!},
\end{equation}
where $\pi $ denotes the projection onto $V$ with respect to the metric $K$. Following \cite{SIM}, we say that the Higgs bundle $(E , \overline{\partial }_{E}, \theta )$  is $K$-analytic stable (semi-stable) if for every proper saturated sub-Higgs sheaf $V\subset E$,
\begin{equation*}
\frac{\textmd{deg}_{\omega}(V,K)}{\textmd{rank}(V)}<(\leq)\frac{\textmd{deg}_{\omega}(E,K)}{\textmd{rank}(E)}.
\end{equation*}

In this paper, we will show that, under some assumptions on the base space $(X, g)$, the analytic stability implies the existence of Hermitian-Einstein metric on $(E , \overline{\partial }_{E}, \theta )$, i.e. we obtain the following Donaldson-Uhlenbeck-Yau type theorem.

\begin{thm} \label{theorem1}
Let $(X, g)$ be a non-compact Gauduchon manifold satisfying the Assumptions 1,2,3, and $|\mathrm{d}\omega^{n-1}|_{g}\in L^2(X)$,  $(E,\bar{\partial}_E,\theta)$ be a Higgs bundle over $X$ with a Hermitian metric $K$ satisfying $\sup_X|\Lambda_{\omega} F_{K,\theta}|_{K}<+\infty$. If $(E,\bar{\partial}_E,\theta)$ is $K$-analytic stable, then there exists a Hermitian metric $H$ with $\overline{\partial }_{\theta} (\log K^{-1}H)\in L^{2}$,  $H$ and $K$ are mutually bounded,  such that
\begin{equation*}
\sqrt{-1}\Lambda_{\omega} (F_{H} +[\theta , \theta ^{*_H}])
=\lambda_{K, \omega}\cdot \mathrm{Id}_{E},
\end{equation*}
where the constant $\lambda_{K, \omega }=\frac{\mathrm{deg}_{\omega}(E,K)}{\mathrm{rank}(E) \mathrm{Vol}(X, g)}$.
\end{thm}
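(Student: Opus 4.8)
\emph{Strategy.} The plan is to follow the heat-flow strategy of Simpson, adapted to the non-compact Gauduchon setting. Writing any Hermitian metric as $H=Ke^{s}$ with $s=\log(K^{-1}H)$ a $K$-self-adjoint endomorphism, the Hermitian-Einstein equation becomes a nonlinear second-order elliptic equation in $s$ (recall that the $(2,0)$ and $(0,2)$ parts of $F_{H,\theta}$ drop out under $\Lambda_{\omega}$, so that $\sqrt{-1}\Lambda_{\omega}F_{H,\theta}=\sqrt{-1}\Lambda_{\omega}(F_H+[\theta,\theta^{*_H}])$). I would introduce the Hitchin-Simpson heat flow
\[
H^{-1}\frac{\pt H}{\pt t}=-2\left(\sqrt{-1}\,\Lambda_{\omega}F_{H,\theta}-\lmd_{K,\omega}\,\mathrm{Id}_E\right),\qquad H(0)=K,
\]
and first establish long-time existence. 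Short-time existence is standard parabolic theory; for long-time existence on the non-compact $X$ the crux is a uniform bound on the Einstein tensor $e(H):=\sqrt{-1}\Lambda_{\omega}F_{H,\theta}-\lmd_{K,\omega}\mathrm{Id}_E$. Here Assumption 2 enters: the exhaustion function $\phi$ with $\sqrt{-1}\Lambda_{\omega}\pt\bp\phi$ bounded allows a maximum-principle argument at infinity (via a cut-off against $\phi$) showing that $\sup_X|e(H(t))|_H$ is controlled by its initial value $\sup_X|\Lambda_{\omega}F_{K,\theta}|_K<+\infty$, which is finite by hypothesis.

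\emph{Uniform control and the role of the Gauduchon/$L^2$ hypotheses.} Second, I would derive the integral control needed for convergence through the Donaldson-type functional $M(K,H)$, whose variation along the flow is monotone and whose value is bounded below in terms of the analytic degree via the Chern-Weil identity \eqref{cw}. Integration by parts is exactly where the Gauduchon hypothesis and the new assumption $|\mathrm{d}\omega^{n-1}|_g\in L^2(X)$ are used: differentiating traces of curvature-type quantities against $\omega^{n-1}$ produces second-order terms annihilated by $\pt\bp\omega^{n-1}=0$, together with first-order error terms weighted by $\mathrm{d}\omega^{n-1}$. These errors are estimated by Cauchy-Schwarz against the $L^2$ quantity $\bp_{\theta}\pi$, so that \eqref{cw} and the energy identities remain valid on the non-compact base of finite volume.

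\emph{The dichotomy and the main obstacle.} Third comes the dichotomy producing the conclusion. Either $\sup_X|s(t)|_K$ stays bounded as $t\raw\infty$, in which case, combined with the Einstein-tensor bound and interior elliptic estimates, one extracts a limit metric $H$ solving the Hermitian-Einstein equation with $H$ and $K$ mutually bounded and $\bp_{\theta}s\in L^2$; or $\sup_X|s(t)|_K\raw\infty$. The heart of the argument is to exclude the second alternative using stability, and this is where I expect the principal difficulty. Following Uhlenbeck-Yau, after rescaling $s(t)$ by its $C^0$-norm I would extract a weak $L^2_1$ limit and show it is a weakly holomorphic projection $\pi_\infty\in L^2_1$ satisfying $(\mathrm{Id}_E-\pi_\infty)\bp_{\theta}\pi_\infty=0$; the vanishing of the $\theta$-component in the limit is the feature special to the Higgs setting, and it guarantees that $\pi_\infty$ defines a saturated sub-Higgs sheaf $V$. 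The key inequality extracted from $M(K,H)$ then forces $\deg_\omega(V,K)/\mathrm{rank}(V)\ge\deg_\omega(E,K)/\mathrm{rank}(E)$, contradicting $K$-analytic stability. Making this limiting argument rigorous — establishing the $L^2_1$ regularity and $\theta$-invariance of $\pi_\infty$ on the non-compact base, and invoking Assumption 3 to pass from the integral bound back to a sup bound (using in particular the rigidity statement that $\sqrt{-1}\Lambda_{\omega}\pt\bp f\ge 0$ implies $\sqrt{-1}\Lambda_{\omega}\pt\bp f=0$) — is the main obstacle, and it is precisely here that all three assumptions, the Gauduchon condition, and $|\mathrm{d}\omega^{n-1}|_g\in L^2$ must be combined.
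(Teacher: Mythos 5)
There is a genuine gap, and it is exactly the one the paper is organized around avoiding. Your strategy hinges on the Donaldson-type functional $M(K,H)$: you use its monotonicity along the Hitchin--Simpson flow to get integral control, and you extract the destabilizing inequality for the limiting subsheaf from "the key inequality extracted from $M(K,H)$." But on a Gauduchon manifold the Donaldson functional is not well defined: its construction requires path-independence of $\int_0^1\int_X \mathrm{tr}\bigl(\dot H_t H_t^{-1}\,\Phi(H_t,\theta)\bigr)\frac{\omega^n}{n!}\,\mathrm{d}t$, which needs $\mathrm{d}\omega^{n-1}=0$ (K\"ahler, or at least balanced); the Gauduchon condition $\partial\bar\partial\omega^{n-1}=0$ is strictly weaker and only makes degrees well defined, not the functional. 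The paper states this obstruction explicitly and, for that reason, abandons Simpson's functional-based heat-flow scheme in favour of the Uhlenbeck--Yau continuity method: it solves the perturbed equation \eqref{eq} for each $\varepsilon\in(0,1]$ (where the $\varepsilon\log(K^{-1}H)$ term supplies the a priori $C^0$ bound $|\log h_\varepsilon|\le \varepsilon^{-1}\sup_X|\Phi(K,\theta)|$ that your unperturbed flow lacks), and replaces the functional by the pointwise-integrated identity \eqref{eq33}, proved via Stokes on the non-compact base using precisely the Gauduchon condition, $|\mathrm{d}\omega^{n-1}|_g\in L^2$, and Simpson's Lemma on integrating exact $(2n-1)$-forms. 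The destabilizing inequality in the blow-up alternative is then derived from \eqref{eq33} applied to the rescaled $u_{\varepsilon_i}=s_{\varepsilon_i}/\|s_{\varepsilon_i}\|_{L^2}$, not from any functional. As written, your second step ("Uniform control...via the Donaldson-type functional") and the concluding inequality of your third step cannot be carried out.

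A secondary omission: the paper first solves a Poisson equation $\widetilde{\Delta}f=\psi$ on $(X,g)$ (Section 4, itself a nontrivial use of Assumptions 1--3 and $|\mathrm{d}\omega^{n-1}|_g\in L^2$) to perform a conformal change $\overline K=e^fK$ normalizing $\mathrm{tr}\,\Phi(\overline K,\theta)=0$; this is what lets one reach the full Hermitian--Einstein equation rather than only the trace-free version, and is the stated improvement over Simpson. Your outline does not address how the trace part of the equation is handled. The remaining ingredients you list (exhaustion by $X_\varphi$, maximum principle against $\phi$, Assumption 3 converting integral bounds to sup bounds, Uhlenbeck--Yau regularity of $L^2_1$ weakly holomorphic subbundles, and the $\theta$-invariance $(\mathrm{Id}_E-\pi_\alpha)[\theta,\pi_\alpha]=0$) do match the paper's argument, but they are deployed there within the continuity-method framework, not along the unperturbed flow.
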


 From the Chern-Weil formula (\ref{cw}), it is easy to see that the existence of Hermitian-Einstein metric $H$ implies $(E,\bar{\partial}_E,\theta)$ is $H$-analytic poly-stable. Our result is slightly better than that in \cite{SIM}, where Simpson only obtained a Hermitian metric with vanishing trace-free curvature. The reason is that, in Section $4$, we can solve the following Poisson equation
\begin{equation}\label{po1}
-2\sqrt{-1}\Lambda_{\omega }  \bar{\partial} \partial f =\psi
\end{equation}
on the non-K\"ahler and non-compact manifold $(X, g)$ when $\int_{X}\psi\frac{\omega^{n}}{n!}=0$. In \cite{SIM}, Simpson used Donaldson's heat flow method to attack the existence problem of the Hermitian-Einstein metrics on Higgs bundles, and his proof relies on the properties of the Donaldson functional. However, the Donaldson functional is not well-defined when $g$ is only Gauduchon. So Simpson's argument is not applicable in our situation directly. In this paper, we follow the argument of Uhlenbeck-Yau in \cite{UY86}, where they used the continuity method and their argument is more natural. We first solve  the following perturbed equation on $(X, g)$:
\begin{equation} \label{eq}
\sqrt{-1}\Lambda_{\omega } (F_{H}+[\theta,\theta^{*_H}])-\lambda \cdot \textmd{Id}_E+\varepsilon \log (K^{-1}H)=0.
\end{equation}
The above perturbed equation can be solved by using the fact that the elliptic operators are Fredholm if the base manifold is compact. Generally speaking, this fact is not true in the non-compact case, which means we can not directly apply this method to solve the perturbed equation on the non-compact manifold. To fix this, we combine the method of heat flow and the method of exhaustion to solve the perturbed equation on $(X, g)$ for any $0< \varepsilon \leq 1$, see Section $5$ for details. For simplicity, we  set
\begin{equation} \Phi(H, \theta)=\sqrt{-1}\Lambda_{\omega } (F_{H}+[\theta,\theta^{*_H}])-\lambda_{K , \omega } \cdot \textmd{Id}_E. \end{equation}
Under the assumptions as that in Theorem \ref{theorem1}, we can prove the following identity:
\begin{equation}\label{eq33}
\int_X \tr(\Phi(K,\theta) s)+\lag\Psi(s)(\overline{\partial}_{\theta}s), \overline{\partial}_{\theta}s\rag_{K}\frac{\og^n}{n!}=\int_X \tr(\Phi(H,\theta) s),
\end{equation}
where $s=\log (K^{-1}H)$ and
\begin{equation}\label{eq3301}
\Psi(x,y)=
\begin{cases}
&\frac{e^{y-x}-1}{y-x},\ \ \ x\neq y;\\
&\ \ \ \  1,\ \ \ \ \ \  x=y.
\end{cases}
\end{equation}
By the above identity (\ref{eq33}) and Uhlenbeck-Yau's result (\cite{UY86}), that $L_{1}^{2}$
weakly holomorphic sub-bundles define coherent sub-sheaves, we can obtain the existence result of Hermitian-Einstein metric by using the continuity method. It should be pointed out that application of the identity (\ref{eq33}) plays a key role  in our argument (see Section 6), which is slightly different with that in \cite{UY86} (or \cite{Bu, LY, LT}).

In the end of this paper, we also study the semi-stable case. A Higgs bundle is said to be admitting an approximate Hermitian-Einstein structure, if for every $\delta >0$, there exists a Hermitian metric $H$ such that
\begin{equation*}
\sup_X|\sqrt{-1}\Lambda_{\omega }(F_H+[\theta,\theta^{*_H}])-\lambda_{K, \omega } \cdot \textmd{Id}_E|_H<\delta.
\end{equation*}
This notion was firstly introduced by Kobayashi(\cite{Kobayashi})  in  holomorphic vector bundles (i.e. $\theta =0$). He proved that over projective manifolds, a semi-stable holomorphic vector bundle  must admit an approximate Hermitian-Einstein structure. In \cite{LZ},  Li and  the third author proved this result is valid for Higgs bundles over compact K\"ahler manifolds. There are also some other interesting works related, see references \cite{bruzzo, Cardona, Jacob1, NZ} for details. In this paper, we obtain an existence result of approximate Hermitian-Einstein structures on  analytic semi-stable Higgs bundles over a class of non-compact Gauduchon  manifolds. In fact, we prove that:

\begin{thm} \label{theorem2}
Under the  same assumptions as that in Theorem \ref{theorem1}, if the Higgs bundle $(E,\bar{\partial}_E,\theta)$ is $K$-analytic semi-stable, then there must exist an approximate Hermitian-Einstein structure, i.e. for every $\delta >0$, there exists a Hermitian metric $H$ with $H$ and $K$  mutually bounded,  such that
\begin{equation*}\sup_X|\sqrt{-1}\Lambda_{\omega }(F_H+[\theta,\theta^{*_H}])-\lambda_{K, \omega } \cdot \mathrm{Id}_E|_H<\delta .\end{equation*}
\end{thm}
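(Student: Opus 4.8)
The plan is to deduce Theorem \ref{theorem2} from Theorem \ref{theorem1} by a standard perturbation-of-stability argument adapted to the Gauduchon, non-compact setting. The key idea is that a semi-stable Higgs bundle, while not stable, can be made stable after perturbing the underlying Einstein constant. Concretely, for a small parameter $\epsilon>0$ I would consider the modified slope condition obtained by replacing $\lambda_{K,\omega}$ with a constant shifted by $\epsilon$; equivalently, one solves the perturbed Hermitian-Einstein equation
\begin{equation*}
\sqrt{-1}\Lambda_{\omega}(F_{H_\epsilon}+[\theta,\theta^{*_{H_\epsilon}}])-\lambda_{K,\omega}\cdot\mathrm{Id}_E+\epsilon\log(K^{-1}H_\epsilon)=0,
\end{equation*}
which is precisely equation (\ref{eq}) with $\varepsilon=\epsilon$. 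By the analysis indicated in Section $5$ of the paper, this perturbed equation admits a solution $H_\epsilon$ mutually bounded with $K$ for each $0<\epsilon\leq 1$. The strategy is then to show that as $\epsilon\to 0$, the quantity $\sup_X|\Phi(H_\epsilon,\theta)|_{H_\epsilon}$ tends to zero, which yields the approximate Hermitian-Einstein structure upon taking $\delta$-dependent choices of $\epsilon$.

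First I would record, from the solvability of the perturbed equation, that $\Phi(H_\epsilon,\theta)=-\epsilon\log(K^{-1}H_\epsilon)=-\epsilon s_\epsilon$, where $s_\epsilon=\log(K^{-1}H_\epsilon)$. Hence it suffices to establish the uniform bound $\sup_X\epsilon|s_\epsilon|\to 0$ as $\epsilon\to 0$. Testing the perturbed equation and invoking the integral identity (\ref{eq33}) with $H=H_\epsilon$, together with $\int_X\mathrm{tr}\,\Phi(H_\epsilon,\theta)\frac{\omega^n}{n!}=0$ (which follows from the definition of $\lambda_{K,\omega}$ and the Chern-Weil formula), I would derive an a priori $L^1$ or $L^2$ control on $s_\epsilon$ and on $\overline{\partial}_\theta s_\epsilon$. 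The semi-stability hypothesis enters here: the point is to show $\epsilon\|s_\epsilon\|$ stays bounded and in fact that $\|s_\epsilon\|$ cannot blow up faster than $\epsilon^{-1}$ in a way that would force a destabilizing sub-Higgs sheaf.

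The heart of the argument is the dichotomy in the limit $\epsilon\to 0$. Either $\epsilon\sup_X|s_\epsilon|\to 0$, in which case we are done since $\sup_X|\Phi(H_\epsilon,\theta)|_{H_\epsilon}=\epsilon\sup_X|s_\epsilon|\to 0$; or there is a subsequence along which $\epsilon_i\sup_X|s_{\epsilon_i}|$ stays bounded below by a positive constant while $\sup_X|s_{\epsilon_i}|\to\infty$. In the latter case, following the Uhlenbeck-Yau strategy, I would normalize $u_{\epsilon_i}=s_{\epsilon_i}/\|s_{\epsilon_i}\|_{L^2}$ (or an appropriate $L^1$ normalization), extract a weak limit $u_\infty$ in $L_1^2$, and use Uhlenbeck-Yau's regularity result (that $L_1^2$ weakly holomorphic sub-bundles define coherent sub-sheaves) to produce a saturated proper sub-Higgs sheaf from the eigenspace decomposition of $u_\infty$. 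A careful computation using the identity (\ref{eq33}) and the convexity encoded in $\Psi$ then shows this sheaf destabilizes $(E,\overline{\partial}_E,\theta)$, contradicting semi-stability; this step must respect the Gauduchon condition through the assumption $|\mathrm{d}\omega^{n-1}|_g\in L^2(X)$, which is what allows the relevant integrations by parts to close.

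I expect the main obstacle to be the limiting/compactness step in the non-compact setting. Unlike the compact case, extracting a convergent subsequence of the normalized endomorphisms $u_{\epsilon_i}$ and ensuring the weak limit is a genuine nonzero $L_1^2$ section requires the finite-volume Assumption 1, the exhaustion-function Assumption 2, and the analytic estimate in Assumption 3 to control behavior near infinity and to rule out the limit escaping to zero or losing integrability. Establishing that the weak limit has the eigenvalue structure needed to build a destabilizing sub-Higgs sheaf, and that the degree inequality derived from (\ref{eq33}) genuinely contradicts semi-stability rather than merely semi-stability-with-equality, is the delicate part; the Higgs field $\theta$ must be shown to preserve the limiting sub-sheaf, which uses $\overline{\partial}_\theta$-closedness in the limit.
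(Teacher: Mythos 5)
Your proposal follows essentially the same route as the paper: solve the perturbed equation (\ref{eq}), observe $\Phi(H_\varepsilon,\theta)=-\varepsilon\log(K^{-1}H_\varepsilon)$, and argue by contradiction that $\varepsilon\sup_X|\log(K^{-1}H_\varepsilon)|\to 0$, using the normalized blow-up limit $u_\infty$ and the Uhlenbeck--Yau regularity statement to produce sub-Higgs sheaves. The one step you flag as ``delicate'' --- obtaining a strict contradiction with semi-stability rather than landing in the equality case --- is exactly where the paper does quantitative work: the $C^0$--$L^2$ comparison of Lemma \ref{lastsectionlemma} converts the assumed lower bound $\varepsilon_i\sup_X|s_{\varepsilon_i}|\geq\delta$ into $\varepsilon_i l_i\geq \delta/C_7-\varepsilon_i C_8/C_7$, so the right-hand side $-\varepsilon_i l_i$ of the identity (\ref{eq33}) contributes a persistent negative constant that survives the weak limit and forces $\nu\leq-\delta/(2C_7)<0$, which does contradict semi-stability. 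With that quantitative input your outline closes; the remaining points you raise (compactness via Assumptions 1--3, $\theta$-invariance of the limiting subsheaves) are handled exactly as in the stable case.
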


This paper is organized as follows. In Section 2,  we give some estimates and
preliminaries which will be used in the proof of Theorems \ref{theorem1} and \ref{theorem2}. At the end of Section 2, we prove the identity (\ref{eq33}). In Section 3, we get the long-time existence result
of the related heat flow. In Section 4, we consider the Poisson equation (\ref{po1}) on some non-compact Gauduchon manifolds. In Section 5, we solve
the perturbed equation (\ref{eq}). In Section 6,  we complete the proof of Theorems \ref{theorem1} and \ref{theorem2}.

\section{Preliminary results}

 Let $(M, g)$ be an $n$-dimensional  Hermitian manifold. Let $(E,\bar{\partial}_E,\theta)$ be
a rank $r$ Higgs bundle over $M$ and $H_{0}$ be a Hermitian metric on $E$.
We consider the following heat flow.
\begin{equation} \label{Flow}
H^{-1}\frac{\partial H}{\partial t}=-2(\sqrt{-1}\Lambda_{\omega } (F_{H}+[\theta,\theta^{*_H}])-\lambda \cdot \textmd{Id}_E+\varepsilon \log (H_0^{-1}H)),
\end{equation}
where $H(t)$ is  a family of Hermitian metrics  on $E$ and $\varepsilon$ is a nonnegative constant.
Choosing local
complex coordinates $\{ z^{i}\}_{i=1}^{n}$ on $M$, then $\omega =\sqrt{-1} g_{i \bar{j }} dz^{i} \wedge d\overline{z}^{j}$.  We
define the complex Laplace operator for functions
$$
\widetilde{\Delta}f=-2\sqrt{-1}\Lambda_{\omega } \bar{\partial }\partial
f =2g^{i \bar{j }}\frac{\partial ^{2 }f}{\partial z^{i }\partial
\bar{z}^{j }},
$$
where $(g^{i \bar{j }})$ is the inverse matrix of the metric
matrix $(g_{i \bar{j }})$.
As usual, we denote the Beltrami-Laplcaian operator by $\Delta$. It is well known that the difference of the two Laplacians is given by a first order differential operator as follows
\begin{equation*} \label{laplacian}
(\widetilde{\Delta}-\Delta)f=\langle V,\nabla f\rangle_g,
\end{equation*}
where $V$ is a well-defined vector field on $M$.

\begin{prop}  \label{P1}
Let $H(t)$ be a solution of the flow \eqref{Flow}, then
\begin{equation}\label{trace}
(\frac{\partial}{\partial t}-\widetilde{\Delta})\{e^{2\varepsilon t}\cdot \mathrm{tr} (\sqrt{-1}\Lambda_{\omega }(F_H+[\theta,\theta^{\ast_{H}}])-\lambda \cdot \mathrm{Id}_E+\varepsilon \log h)\}=0
\end{equation}
and
\begin{equation} \label{dec}
(\frac{\partial}{\partial t}-\widetilde{\Delta})|\sqrt{-1}\Lambda_{\omega }(F_H+[\theta,\theta^{\ast_{H}}])-\lambda \cdot \mathrm{Id}_E+\varepsilon \log h|^2_{H}\leq 0.
\end{equation}
\end{prop}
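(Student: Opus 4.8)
The plan is to derive both statements from a single computation: the pointwise evolution along \eqref{Flow} of the endomorphism
\[
\Phi:=\sqrt{-1}\Lambda_{\og}(F_{H}+[\theta,\theta^{*_H}])-\lmd\cdot \mathrm{Id}_E+\vps\log h,\qquad h:=H_0^{-1}H,
\]
which is exactly the quantity inside the right-hand side of \eqref{Flow}, so that along the flow $v:=H^{-1}\tfrac{\pt H}{\pt t}=-2\Phi$. First I would record the two variation formulas under $H^{-1}\delta H=v$: $\delta F_H=\bp_E(\pt_H v)$ for the Chern curvature and $\delta(\theta^{*_H})=[\theta^{*_H},v]$ for the Higgs adjoint. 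Writing $S:=\sqrt{-1}\Lambda_{\og}(F_H+[\theta,\theta^{*_H}])$ and differentiating along \eqref{Flow} gives $\tfrac{\pt S}{\pt t}=\sqrt{-1}\Lambda_{\og}\big(\bp_E\pt_H v+[\theta,[\theta^{*_H},v]]\big)$. The algebraic point is that, since $\theta$ is of type $(1,0)$ and $\theta^{*_H}$ of type $(0,1)$, the two remaining terms in $\bp_\theta D^{1,0}_{H,\theta}v$ are of type $(0,2)$ and $(2,0)$ and are annihilated by $\Lambda_{\og}$; hence $\tfrac{\pt S}{\pt t}=\sqrt{-1}\Lambda_{\og}\,\bp_\theta D^{1,0}_{H,\theta}v=-2\sqrt{-1}\Lambda_{\og}\,\bp_\theta D^{1,0}_{H,\theta}\Phi$, i.e. $S$ evolves by the Hitchin–Simpson Laplacian of $\Phi$.

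For \eqref{trace} I would take the trace of $\tfrac{\pt S}{\pt t}=\sqrt{-1}\Lambda_{\og}(\bp_E\pt_H v+[\theta,[\theta^{*_H},v]])$. The double bracket is a trace of a commutator and vanishes, while $\tr(\bp_E\pt_H v)=\bp\pt\,\tr v$; since $\tr v=-2\tr\Phi$, contracting and invoking the definition $\widetilde{\Delta}f=-2\sqrt{-1}\Lambda_{\og}\bp\pt f$ yields $\tr\tfrac{\pt S}{\pt t}=\widetilde{\Delta}\,\tr\Phi$. For the remaining piece, Jacobi's formula gives $\tr(\tfrac{\pt}{\pt t}\log h)=\tfrac{d}{dt}\log\det h=\tr(h^{-1}\tfrac{\pt h}{\pt t})=\tr v=-2\tr\Phi$. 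Adding the two contributions gives $(\tfrac{\pt}{\pt t}-\widetilde{\Delta})\tr\Phi=-2\vps\,\tr\Phi$, and multiplying by the integrating factor $e^{2\vps t}$ produces \eqref{trace} at once; this is precisely where the exponential prefactor in \eqref{trace} originates.

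For \eqref{dec} I would first differentiate $|\Phi|^2_H=\tr(\Phi\,\Phi^{*_H})$. Because $\Phi$ is $H$-Hermitian, the adjoint-variation contribution collapses to the commutator trace $\tr(\Phi[\Phi,v])=0$, leaving the clean identity $\tfrac{\pt}{\pt t}|\Phi|^2_H=2\,\mathrm{Re}\,\langle\tfrac{\pt\Phi}{\pt t},\Phi\rangle_H$. On the other side, the Bochner–Kodaira identity for the unitary Hitchin–Simpson connection, applied consistently with $\widetilde{\Delta}=-2\sqrt{-1}\Lambda_{\og}\bp\pt$, reads
\[
\widetilde{\Delta}|\Phi|^2_H=2\,\mathrm{Re}\,\langle\tfrac{\pt S}{\pt t},\Phi\rangle_H+2\big(|D^{1,0}_{H,\theta}\Phi|^2_H+|\bp_\theta\Phi|^2_H\big),
\]
with a manifestly nonnegative gradient term, and crucially no torsion correction, precisely because both Laplacians are $\og$-contractions of $\bp\pt$-type expressions (this is exactly why the statement is phrased with $\widetilde{\Delta}$ rather than the Beltrami–Laplacian). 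Subtracting and using $\tfrac{\pt\Phi}{\pt t}=\tfrac{\pt S}{\pt t}+\vps\tfrac{\pt}{\pt t}\log h$ collapses everything to
\[
\Big(\tfrac{\pt}{\pt t}-\widetilde{\Delta}\Big)|\Phi|^2_H=2\vps\,\mathrm{Re}\,\langle\tfrac{\pt}{\pt t}\log h,\Phi\rangle_H-2\big(|D^{1,0}_{H,\theta}\Phi|^2_H+|\bp_\theta\Phi|^2_H\big).
\]

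It then remains to show the first term is nonpositive. Since $\Phi=-\tfrac12 v=-\tfrac12 h^{-1}\tfrac{\pt h}{\pt t}$, I would evaluate $\langle\tfrac{\pt}{\pt t}\log h,\Phi\rangle_H=-\tfrac12\tr\big((\tfrac{\pt}{\pt t}\log h)(h^{-1}\tfrac{\pt h}{\pt t})\big)$ in a frame diagonalizing $h=\mathrm{diag}(\mu_i)$, where the entries of $\tfrac{\pt}{\pt t}\log h$ are $\tfrac{\log\mu_i-\log\mu_j}{\mu_i-\mu_j}\tfrac{\pt h_{ij}}{\pt t}$. Because $\tfrac{\pt h}{\pt t}$ is Hermitian and $\tfrac{\log\mu_i-\log\mu_j}{\mu_i-\mu_j}\,\mu_j^{-1}>0$, this trace equals $\sum_{i,j}\tfrac{\log\mu_i-\log\mu_j}{\mu_i-\mu_j}\,\mu_j^{-1}\big|\tfrac{\pt h_{ij}}{\pt t}\big|^2\ge 0$, so the $\vps$-term is $\le 0$ and \eqref{dec} follows. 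I expect the two delicate points to be exactly these: the logarithmic monotonicity estimate, which fixes the sign of the $\vps$-contribution and explains why \eqref{dec} needs no integrating factor, and the verification that the pointwise Bochner identity carries no torsion term on the Gauduchon base — both of which are resolved by working throughout with $-2\sqrt{-1}\Lambda_{\og}\bp\pt$ in place of the Riemannian Laplacian.
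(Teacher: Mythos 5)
Your proposal is correct and follows essentially the same route as the paper: compute the evolution of $\Phi_{\varepsilon}$ along the flow, take traces and use the integrating factor $e^{2\varepsilon t}$ for \eqref{trace}, and combine a Bochner-type identity for $\widetilde{\Delta}|\Phi_{\varepsilon}|^2_H$ with the pointwise inequality $\langle \frac{\partial}{\partial t}(\log h), h^{-1}\frac{\partial h}{\partial t}\rangle_H\geq 0$ for \eqref{dec}. The only cosmetic differences are that you absorb the Higgs terms into the Hitchin--Simpson operators (which is equivalent to the paper's separate nonpositive term $-4\langle \sqrt{-1}\Lambda_{\omega}[\theta,[\theta^{*_H},\Phi_{\varepsilon}]],\Phi_{\varepsilon}\rangle_H$ via $\langle \sqrt{-1}\Lambda_{\omega}[\theta,[\theta^{*_H},\Phi_{\varepsilon}]],\Phi_{\varepsilon}\rangle_H=|[\theta,\Phi_{\varepsilon}]|^2_H$), and that you verify the sign of the $\varepsilon$-term by the divided-difference formula for $\frac{\partial}{\partial t}\log h$ in an eigenframe of $h$, where the paper uses the moving unitary frame of \cite{LT95} — the two computations are the same positivity of $(\lambda_i-\lambda_j)(e^{\lambda_i-\lambda_j}-1)$.
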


\begin{proof}
For simplicity, we denote $\sqrt{-1}\Lambda_{\omega }(F_H+[\theta,\theta^{\ast_{H}}])-\lambda \cdot \textmd{Id}_E+\varepsilon \log h=\Phi_{\varepsilon}$. By calculating directly, we have
\begin{equation}\label{trace1}
\frac{\partial}{\partial t}\Phi_{\varepsilon}
=\sqrt{-1}\Lambda_{\omega }\{\bar{\partial}_E(\partial_H(h^{-1}\frac{\partial h}{\partial t}))+[\theta,[\theta^{*_H},h^{-1}\frac{\partial h}{\partial t}]]\}
+\varepsilon \frac{\partial}{\partial t}(\log h),
\end{equation}
and
\begin{equation*}
\begin{split}
\widetilde{\Delta}|\Phi_{\varepsilon}|^2_H&=-2\sqrt{-1}\Lambda_{\omega } \bar{\partial}\partial \textmd{tr}\{\Phi_{\varepsilon} H^{-1}\bar{\Phi}_{\varepsilon}^{t}H\}\\
&=-2\sqrt{-1}\Lambda_{\omega } \bar{\partial} \textmd{tr}\{\partial \Phi_{\varepsilon} H^{-1}\bar{\Phi}_{\varepsilon}^{t}H-\Phi_{\varepsilon} H^{-1}\partial H H^{-1}\bar{\Phi}_{\varepsilon}^{t}H\\
&~~~~+\Phi H^{-1}\overline{{\bar{\partial}\Phi_{\varepsilon}}}^{t}H+\Phi_{\varepsilon} H^{-1}\bar{\Phi}_{\varepsilon}^{t}H H^{-1}\partial H\}\\
&=2\textmd{Re} \langle -2\sqrt{-1}\Lambda_{\omega } \bar{\partial}_E\partial_H{\Phi_{\varepsilon}},\Phi_{\varepsilon}\rangle_H
+2|\partial_H\Phi_{\varepsilon}|^2_H+2|\bar{\partial}_E\Phi_{\varepsilon}|^2_H.
\end{split}
\end{equation*}
From (\ref{trace1}), it is easy to conclude that
\begin{equation}\label{trace2}
(\frac{\partial}{\partial t}-\widetilde{\Delta}) \tr \Phi_{\varepsilon}=-2\varepsilon \tr \Phi_{\varepsilon}.
\end{equation}
Then, (\ref{trace2}) implies (\ref{trace}).

From \protect{\cite[p. 237]{LT95}}, we can choose an open dense subset $W\subset M\times [0, T_{0}]$ satisfying at each $(x_{0}, t_{0})\in W$ there exist an open neighborhood $U$ of $(x_{0}, t_{0})$, a local unitary basis $\{e_i\}_{i=1}^r$ with respect to $H$ and functions $\{\lambda_i\in C^{\infty}(U,\mathbb{R})\}_{i=1}^r$ such that
$$h(y, t)=\sum_{i=1}^re^{\lambda_i(y, t)}e_i(y,t)\otimes e^i(y, t)$$
for all $(y, t)\in U$, where $\{e^i\}_{i=1}^r$ is the corresponding dual basis. Then
 we get
$$\frac{\partial}{\partial t}(\log h)=\sum_{i=1}^r(\frac{\textmd{d} \lambda_i}{\textmd{d} t})e_i\otimes e^i+\sum_{i\neq j}(\lambda_j-\lambda_i)\alpha_{ji}e_i\otimes e^j,$$
and
$$h^{-1}\frac{\partial h}{\partial t}=\sum_{i=1}^r(\frac{\textmd{d} \lambda_i}{\textmd{d} t})e_i\otimes e^i+\sum_{i\neq j}(e^{\lambda_j-\lambda_i}-1)\alpha_{ji}e_i\otimes e^j,$$
where $\frac{\textmd{d}}{\textmd{d}t}e_i=\alpha_{ij}e_j$. Since $(\lambda_i-\lambda_j)(e^{\lambda_i-\lambda_j}-1)\geq 0$ for all $\lambda_i,\lambda_j\in \mathbb{R}$, we have
\begin{equation*}
\langle \frac{\partial}{\partial t}(\log h), h^{-1}\frac{\partial h}{\partial t}\rangle_H\geq 0.
\end{equation*}
Using the above formulas, we conclude that
\begin{equation*}\label{cur1}
\begin{split}
(\frac{\partial}{\partial t}-\widetilde{\Delta})|\Phi_{\varepsilon}|^2_H
&=-4\langle \sqrt{-1}\Lambda_{\omega } [\theta,[\theta^{*_H},\Phi_{\varepsilon}]], \Phi_{\varepsilon}\rangle_H
-2|\partial_H\Phi_{\varepsilon}|^2_H-2|\bar{\partial}_E\Phi|^2_H\\
&~~~~+2\varepsilon \langle \frac{\partial}{\partial t}(\log h),\Phi_{\varepsilon}\rangle_H\\
&\leq 0.
\end{split}
\end{equation*}
\end{proof}

We introduce the Donaldson's distance on the space of the Hermitian metrics as follows.

\begin{defn}
For any two Hermitian metrics $H$ and $K$ on the bundle $E$, we define
$$\sigma(H,K)=\mathrm{tr}(H^{-1}K)+\mathrm{tr}(K^{-1}H)-2\mathrm{rank}(E).$$
\end{defn}

It is obvious that $\sigma(H,K)\geq 0$, with equality if and only
if $H=K$. A sequence of metrics $H_{i}$ converges
to $H$ in the usual $C^{0}$ topology if and only if $\sup_{M}\sigma
(H_{i}, H)\rightarrow 0$.

\begin{prop}  \label{P2}
Let $H(t)$, $K(t)$ be two solutions of the flow \eqref{Flow}, then
$$(\widetilde{\Delta}-\frac{\partial}{\partial t})\sigma(H(t),K(t))\geq 0.$$
\end{prop}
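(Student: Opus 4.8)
The plan is to prove the equivalent pointwise inequality $(\frac{\partial}{\partial t}-\widetilde{\Delta})\sigma(H(t),K(t))\leq 0$, which exhibits $\sigma$ as a subsolution of the heat equation. Writing $\tau=H^{-1}K$ and $\tilde{\tau}=K^{-1}H$, we have $\sigma=\mathrm{tr}(\tau)+\mathrm{tr}(\tilde\tau)-2\,\mathrm{rank}(E)$, and since the constant is annihilated by both $\frac{\partial}{\partial t}$ and $\widetilde{\Delta}$, it suffices to compute $(\frac{\partial}{\partial t}-\widetilde{\Delta})\mathrm{tr}(\tau)$ and add the symmetric expression obtained by interchanging $H$ and $K$. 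I emphasize at the outset that the single term $\mathrm{tr}(\tau)$ is \emph{not} individually a subsolution once $\varepsilon>0$; only the symmetrized quantity $\sigma$ is, so the two computations must be combined before the sign of the zeroth-order term can be settled.

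First I would differentiate in time. Abbreviating $\Phi^{H}_{\varepsilon}=\sqrt{-1}\Lambda_{\omega}(F_{H}+[\theta,\theta^{*_H}])-\lambda\cdot\mathrm{Id}_E+\varepsilon\log(H_0^{-1}H)$, the flow reads $H^{-1}\frac{\partial H}{\partial t}=-2\Phi^{H}_{\varepsilon}$, and likewise for $K$. Using $\frac{\partial}{\partial t}(H^{-1}K)=-H^{-1}\frac{\partial H}{\partial t}H^{-1}K+H^{-1}K\,K^{-1}\frac{\partial K}{\partial t}$ and the cyclic invariance of the trace, a direct calculation gives
\[
\frac{\partial}{\partial t}\mathrm{tr}(\tau)=2\,\mathrm{tr}(\Phi^{H}_{\varepsilon}\,H^{-1}K)-2\,\mathrm{tr}(H^{-1}K\,\Phi^{K}_{\varepsilon}).
\]
Next comes the main computation, a Bochner identity for $\widetilde{\Delta}\,\mathrm{tr}(\tau)=-2\sqrt{-1}\Lambda_{\omega}\,\mathrm{tr}\big(D^{1,0}_{H,\theta}\bar{\partial}_{\theta}\tau\big)$, where I have used that the Higgs contributions to $\bar\partial_\theta$ and $D^{1,0}_{H,\theta}$ enter through commutators and hence vanish under the trace. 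Here I would invoke the standard relation between the two Hitchin--Simpson connections, $D^{1,0}_{K,\theta}-D^{1,0}_{H,\theta}=\tau^{-1}D^{1,0}_{H,\theta}\tau$, and the induced curvature identity, noting that $\sqrt{-1}\Lambda_{\omega}F_{H,\theta}=\sqrt{-1}\Lambda_{\omega}(F_H+[\theta,\theta^{*_H}])$ because $\Lambda_\omega$ kills the $(2,0)$ and $(0,2)$ pieces $\partial_H\theta$ and $\bar\partial_E\theta^{*_H}$. Substituting and reorganizing the second-order part should yield
\[
\widetilde{\Delta}\,\mathrm{tr}(\tau)=2\,\mathrm{tr}\big(\sqrt{-1}\Lambda_{\omega}(F_H+[\theta,\theta^{*_H}])\,H^{-1}K\big)-2\,\mathrm{tr}\big(H^{-1}K\,\sqrt{-1}\Lambda_{\omega}(F_K+[\theta,\theta^{*_K}])\big)+2\,G,
\]
with $G\geq 0$ a density built from $D^{1,0}_{H,\theta}\tau$ weighted by $\tau^{-1}$ (the exact positive form is checked, for instance, in the line-bundle model, where $G=\frac{K}{H}|\partial\log(K/H)|_g^2$).

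Subtracting the two displays, the pure curvature and Higgs terms are identical and cancel, and the $-\lambda\cdot\mathrm{Id}_E$ contributions cancel as well; what remains is
\[
\Big(\frac{\partial}{\partial t}-\widetilde{\Delta}\Big)\mathrm{tr}(\tau)=2\varepsilon\,\mathrm{tr}\big((\log(H_0^{-1}H)-\log(H_0^{-1}K))\,H^{-1}K\big)-2G.
\]
Adding the symmetric expression for $\mathrm{tr}(\tilde\tau)$ and recalling $G\geq 0$, the decisive point becomes the sign of the combined logarithmic term $2\varepsilon\,\mathrm{tr}\big((\log(H_0^{-1}H)-\log(H_0^{-1}K))(H^{-1}K-K^{-1}H)\big)$. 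Diagonalizing $H^{-1}K$ in an $H$-orthonormal frame reduces this to an eigenvalue sum of the form $\sum\log u_i\cdot(u_i-u_i^{-1})\geq 0$, which is nonpositive after the sign in front, by the monotonicity of $\log$ — the same mechanism as the inequality $(\lambda_i-\lambda_j)(e^{\lambda_i-\lambda_j}-1)\geq 0$ used in Proposition \ref{P1}. This yields $(\frac{\partial}{\partial t}-\widetilde{\Delta})\sigma\leq 0$, equivalently the claim. I expect the main obstacle to be the Bochner identity of the third paragraph: carrying the Higgs term $[\theta,\theta^{*_H}]$ correctly through the commutation of $D^{1,0}_{H,\theta}$ and $\bar{\partial}_{\theta}$ and verifying that the remainder $G$ is genuinely nonnegative with the stated sign, together with the careful bookkeeping that forces the symmetrization of $H$ and $K$ before the $\varepsilon$-logarithm term can be controlled.
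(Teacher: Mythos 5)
Your overall route is the paper's: compute $(\widetilde{\Delta}-\frac{\partial}{\partial t})(\mathrm{tr}(H^{-1}K)+\mathrm{tr}(K^{-1}H))$, watch the curvature and $\lambda$ terms cancel between the time derivative and the Bochner identity, absorb the first-order and Higgs contributions into a nonnegative density, and reduce everything to the sign of the $\varepsilon$-logarithm term. Up to that point your bookkeeping is correct (including the correct observation that the two traces must be symmetrized before the $\varepsilon$-term can be controlled), and your packaging of the first-order and $[\theta,\theta^{*}]$ terms into a single nonnegative $G$ via the Hitchin--Simpson connection is a harmless repackaging of the paper's two separate nonnegative terms.

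The gap is in the last step, which is the only genuinely new algebraic point of this proposition. You claim that diagonalizing $H^{-1}K$ in an $H$-orthonormal frame reduces $\mathrm{tr}\bigl((\log(H_0^{-1}H)-\log(H_0^{-1}K))(H^{-1}K-K^{-1}H)\bigr)$ to a single eigenvalue sum $\sum_i \log u_i\,(u_i-u_i^{-1})$. This implicitly assumes that $s_1:=\log(H_0^{-1}H)$ and $s_2:=\log(H_0^{-1}K)$ commute, so that $s_1-s_2=\log(K^{-1}H)$ and all the operators involved are simultaneously diagonalizable. In general they do not commute, $s_1-s_2\neq\log(K^{-1}H)$, and the trace of a product of two non-commuting self-adjoint endomorphisms is not determined by the eigenvalues of one factor; so the reduction to a single-index sum fails. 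The paper handles exactly this point by diagonalizing $s_1$ and $s_2$ in two \emph{different} unitary frames, writing the change-of-basis matrix $b_{\alpha\beta}$, and obtaining the double sum $\sum_{\alpha,\beta} e^{-\lambda_{\alpha}}|b_{\alpha\beta}|^2(\widetilde{\lambda}_{\beta}-\lambda_{\alpha})(e^{\widetilde{\lambda}_{\beta}}-e^{\lambda_{\alpha}})\geq 0$ for each of the two summands $\mathrm{tr}\bigl(e^{-s_2}(e^{s_1}-e^{s_2})(s_1-s_2)\bigr)$ and its mirror image. Your argument gives the right answer only in the commuting (e.g.\ line bundle) case; to close the proof you need this two-frame expansion or an equivalent operator-monotonicity argument.
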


\begin{proof}
Setting $h(t)=K(t)^{-1}H(t)$, we have
\begin{equation*}
\begin{split}
&(\widetilde{\Delta}-\frac{\partial}{\partial t})(\textmd{tr}h+\textmd{tr}h^{-1})\\
&=2\textmd{tr}(-\sqrt{-1}\Lambda_{\omega }\bar{\partial}_Ehh^{-1}\partial_Kh)+2\textmd{tr}(-\sqrt{-1}\Lambda_{\omega }\bar{\partial}_Eh^{-1}h\partial_Hh^{-1})\\
&~~+2\textmd{tr}\{h(\sqrt{-1}\Lambda_{\omega }[\theta,\theta^{*_H}-\theta^{*_K}]))+2\textmd{tr}(h^{-1}(\sqrt{-1}\Lambda_{\omega }[\theta,\theta^{*_K}-\theta^{*_H}])\}\\
&~~+2\varepsilon \textmd{tr}\{h(\log(H_0^{-1}H)-\log(H_0^{-1}K))+h^{-1}(\log(H^{-1}_0K)-\log(H^{-1}_0H))\}\\
&\geq0,
\end{split}
\end{equation*}
where we used
\begin{equation*}
\textmd{tr}\{h(\sqrt{-1}\Lambda_{\omega }[\theta,\theta^{*_H}-\theta^{*_K}])\}=|\theta h^{\frac{1}{2}}-h\theta h^{-\frac{1}{2}}|^2_{K}
\end{equation*}
and
\begin{equation*}
\textmd{tr}\{h^{-1}(\sqrt{-1}\Lambda_{\omega }[\theta,\theta^{*_K}-\theta^{*_H}])\}=|h^{-\frac{1}{2}}\theta-h^{\frac{1}{2}}\theta h^{-1}|^2_{K}.
\end{equation*}
It remains to show that
\begin{equation*}
A:=\textmd{tr}\{h(\log(H_0^{-1}H)-\log(H_0^{-1}K))+h^{-1}(\log(H^{-1}_0K)-\log(H^{-1}_0H))\}\geq 0.
\end{equation*}
Once we set $\log(H_0^{-1}H)=s_1$, $\log(H_0^{-1}K)=s_2$, we have
\begin{equation*}
\begin{split}
A&=\textmd{tr}\Big(e^{-s_2}e^{s_1}(s_1-s_2)+e^{-s_1}e^{s_2}(s_2-s_1)\Big)\\
&=\textmd{tr}\Big(e^{-s_2}(e^{s_1}-e^{s_2})(s_1-s_2)+e^{-s_1}(e^{s_2}-e^{s_1})(s_2-s_1)\Big).
\end{split}
\end{equation*}
Hence we only need to show
$$\textmd{tr}\Big(e^{-s_2}(e^{s_1}-e^{s_2})(s_1-s_2)\Big)\geq0.$$
Choose unitary basis $\{e_{\alpha}\}_{\alpha=1}^r$ such that $s_2(e_{\alpha})=\lambda_{\alpha}e_{\alpha}$. Similarly, $s_1(\widetilde{e}_{\beta})=\widetilde{\lambda}_{\beta}\widetilde{e}_{\beta}$ under the unitary basis $\{\widetilde{e}_{\beta}\}_{\beta=1}^r$. We also assume that $e_{\alpha}=b_{\alpha\beta}\widetilde{e}_{\beta}$. Direct calculation yields
\begin{equation*}
\begin{split}
\textmd{tr}\Big(e^{-s_2}(e^{s_1}-e^{s_2})(s_1-s_2)\Big)
&=\sum_{\alpha=1}^r\langle e^{-s_2}(e^{s_1}-e^{s_2})(s_1-s_2)(e_{\alpha}),e_{\alpha}\rangle_{H_0}\\
&=\sum_{\alpha=1}^r e^{-\lambda_{\alpha}} \langle \sum_{\beta=1}^rb_{\alpha\beta}(\widetilde{\lambda}_{\beta}-\lambda_{\alpha})\widetilde{e}_{\beta},
\sum_{\gamma=1}^rb_{\alpha\gamma}(e^{\widetilde{\lambda}_{\gamma}}-e^{\lambda_{\alpha}})\widetilde{e}_{\gamma}\rangle_{H_0}\\
&=\sum_{\alpha,\beta=1}^r e^{-\lambda_{\alpha}}b_{\alpha\beta}\overline{b_{\alpha\beta}}(\widetilde{\lambda}_{\beta}
-\lambda_{\alpha})(e^{\widetilde{\lambda}_{\beta}}-e^{\lambda_{\alpha}})\\
&\geq0.
\end{split}
\end{equation*}

\end{proof}

\begin{cor} \label{uniq}
Let $H$, $K$ be two Hermitian metrics satisfying \eqref{eq}, then
$$\widetilde{\Delta}\sigma(H,K)\geq 0.$$
\end{cor}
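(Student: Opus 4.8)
The plan is to recognize that Corollary \ref{uniq} is essentially the stationary specialization of Proposition \ref{P2}, so the work is to see that a solution of the perturbed equation \eqref{eq} is nothing but a time-independent solution of the heat flow \eqref{Flow}. Indeed, comparing the two displays, the right-hand side of \eqref{Flow} (with the background metric taken to be the fixed $H_0$ appearing in \eqref{eq}) is, up to the factor $-2$, precisely the quantity
\begin{equation*}
\sqrt{-1}\Lambda_{\omega }(F_{H}+[\theta,\theta^{*_H}])-\lambda\cdot\mathrm{Id}_E+\varepsilon\log(H_0^{-1}H),
\end{equation*}
which vanishes identically when $H$ solves \eqref{eq}. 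Hence, if $H$ and $K$ both satisfy \eqref{eq} (with the same background metric and the same constants $\lambda,\varepsilon$), then each of them, regarded as a constant-in-$t$ family, is a solution of \eqref{Flow}.

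With this observation in hand, I would apply Proposition \ref{P2} directly to the pair of (stationary) solutions $H$ and $K$. That proposition gives
\begin{equation*}
\Big(\widetilde{\Delta}-\frac{\partial}{\partial t}\Big)\sigma(H,K)\geq 0 .
\end{equation*}
Because neither $H$ nor $K$ depends on $t$, the Donaldson distance $\sigma(H,K)=\mathrm{tr}(H^{-1}K)+\mathrm{tr}(K^{-1}H)-2\,\mathrm{rank}(E)$ is also independent of $t$, so $\frac{\partial}{\partial t}\sigma(H,K)=0$. Substituting this into the inequality immediately yields $\widetilde{\Delta}\sigma(H,K)\geq 0$, which is exactly the claim.

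There is no serious analytic obstacle here; the entire content is the algebraic matching of \eqref{eq} with the stationary version of \eqref{Flow}. The only point demanding a little care is to confirm that the two solutions are taken with respect to a common background metric $H_0$ and the same constant $\lambda$ (and the same $\varepsilon$), so that they genuinely solve \emph{the same} flow \eqref{Flow} and Proposition \ref{P2} is applicable without modification; the differential-inequality computation itself has already been carried out in the proof of that proposition. I would also remark, looking ahead, that this corollary is the natural input for a maximum-principle uniqueness argument: since $\sigma(H,K)\geq 0$ and $\widetilde{\Delta}\sigma(H,K)\geq 0$, one expects to combine it with Assumption 3 on $(X,g)$ to conclude that $\sigma(H,K)$ is constant and in fact forces $H=K$, giving uniqueness of solutions to the perturbed equation.
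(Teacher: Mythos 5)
Your proposal is correct and is exactly the argument the paper intends: the corollary is stated as an immediate consequence of Proposition \ref{P2}, since a metric solving \eqref{eq} is a stationary (time-independent) solution of the flow \eqref{Flow}, so the $\frac{\partial}{\partial t}$ term in the differential inequality vanishes. Your care about using a common background metric and the same constants $\lambda$, $\varepsilon$ is the right point to check, and your closing remark about the maximum-principle uniqueness application matches how the corollary is used later in the paper.
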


\medskip

At the end of this section, we give a proof of the identity (\ref{eq33}).  We  first recall some notation. Set $\therm(E,H_{0})=\{\eta\in \tend(E)\mid \eta^{*_{H_{0}}}=\eta\}$. Given $s \in \therm(E,H_{0})$,  we can choose  a local unitary basis  $\{e_{\alpha}\}_{\alpha=1}^{r}$ respect to $H_{0}$ and local functions $\{\lmd_{\alpha}\}_{\alpha=1}^{r}$ such that
\begin{eqnarray*}
s=\sum_{\alpha=1}^r \lmd_{\alpha} \cdot e_{\alpha}\otimes e^{\alpha}  ,
\end{eqnarray*}
 where  $\{e^{\alpha}\}_{\alpha=1}^{r}$ denotes the dual basis of $E^*$. Let   $\Psi\in C^{\infty}(\mathbb{R}\times \mathbb{R}, \mathbb{R})$ and $A=\sum\limits^r_{\alpha,\beta=1}A_{\beta}^{\alpha} e_{\alpha}\otimes e^{\beta}\in \tend(E)$.  We define:
\begin{equation*}
\Psi(\eta)(A)=\Psi(\lmd_{\alpha},\lmd_{\beta})A^{\alpha}_{\beta} e_{\alpha}\otimes e^{\beta}.
\end{equation*}

\medskip

Let $(M, g)$ be a compact Gauduchon manifold with non-empty
smooth boundary $\partial M$. Let $\varphi $ be a smooth function defined on $M$ and satisfy the boundary condition $\varphi |_{\partial M}=t$, where $t$ is a constant. By Stokes' formula, we have
\begin{equation} \label{theta01}
\begin{split}
\int_{M}|\mathrm{d}\varphi |^{2}\frac{\omega^n}{n!}
&=2\int_M(t-\varphi )\sqrt{-1}\partial \bar{\partial}\varphi \wedge\frac{\omega^{n-1}}{(n-1)!}
-2\int_M\sqrt{-1}\partial ((t-\varphi ) \bar{\partial}\varphi )\wedge\frac{\omega^{n-1}}{(n-1)!}\\
&=\int_M(t-\varphi )\tilde{\Delta }\varphi \frac{\omega^{n}}{n!}
+\int_M\sqrt{-1}  \bar{\partial}((t-\varphi)^{2} \wedge \partial \frac{\omega^{n-1}}{(n-1)!})\\
&~~~~+\int_M \sqrt{-1}\partial (\bar{\partial}(t-\varphi)^2\wedge \frac{\omega^{n-1}}{(n-1)!})-\int_M\sqrt{-1}  (t-\varphi)^{2}  \bar{\partial}(\partial \frac{\omega^{n-1}}{(n-1)!})\\
&=\int_M(t-\varphi )\tilde{\Delta }\varphi \frac{\omega^{n}}{n!}.\\
\end{split}
\end{equation}
Using (\ref{theta01}), by the same argument as that in \cite{SIM} (Lemma 5.2), we can obtain the following lemma.

\medskip

\begin{lem} [\protect {\cite[Lemma 5.2]{SIM}}] \label{SIMLEMMA}
Suppose $(X, g)$ is a non-compact Gauduchon manifold admitting  an exhaustion function $\phi$ with $\int_X|\widetilde{\Delta} \phi|\frac{\omega ^{n}}{n!}< \infty$, and suppose $\eta$ is a $(2n-1)$-form with
$\int_X|\eta|^2\frac{\omega ^{n}}{n!}< \infty$. Then if $\mathrm{d}\eta$ is integrable,
\begin{equation*}
\int_{X}\mathrm{d}\eta=0.
\end{equation*}
\end{lem}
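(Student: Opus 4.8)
The plan is to run a cut-off argument against the exhaustion function $\phi$, using Stokes' theorem on precompact sublevel sets and then killing the resulting boundary error with the two integrability hypotheses. Fix a smooth $\gamma:\mathbb{R}\to[0,1]$ with $\gamma\equiv 1$ on $(-\infty,0]$ and $\gamma\equiv 0$ on $[1,\infty)$, set $\gamma_c:=\gamma(\,\cdot\,-c)$ and $\beta_c:=\gamma_c\circ\phi$. Since $\phi$ is an exhaustion function, $\beta_c$ has compact support, so Stokes on $X$ (oriented, without boundary in the support) gives $\int_X \mathrm{d}(\beta_c\,\eta)=0$, that is
\[
\int_X \beta_c\,\mathrm{d}\eta=-\int_X \mathrm{d}\beta_c\wedge\eta=-\int_X \gamma_c'(\phi)\,\mathrm{d}\phi\wedge\eta .
\]
As $c\to\infty$ we have $\beta_c\to 1$ pointwise with $0\le\beta_c\le 1$, so since $\mathrm{d}\eta\in L^1$ dominated convergence sends the left side to $\int_X\mathrm{d}\eta$. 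It therefore suffices to produce a sequence $c_k\to\infty$ along which the right side tends to zero.

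To estimate the error term I would apply Cauchy--Schwarz on the annulus $A_c:=\{c\le\phi\le c+1\}$, where $\gamma_c'(\phi)$ is supported:
\[
\Big|\int_X \gamma_c'(\phi)\,\mathrm{d}\phi\wedge\eta\Big|\le \|\gamma'\|_{\infty}\Big(\int_{A_c}|\mathrm{d}\phi|^2\,\tfrac{\omega^n}{n!}\Big)^{1/2}\Big(\int_{A_c}|\eta|^2\,\tfrac{\omega^n}{n!}\Big)^{1/2}.
\]
For integer $c$ the annuli $A_c$ are disjoint up to measure zero and cover $X$ (taking $\phi\ge 0$, which we may arrange by a constant shift preserving $\widetilde{\Delta}\phi\in L^1$), so $\sum_c\int_{A_c}|\eta|^2<\infty$ by the hypothesis $\eta\in L^2$, and hence $\int_{A_c}|\eta|^2\to 0$. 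The only remaining quantity to control is the gradient energy $\int_{A_c}|\mathrm{d}\phi|^2$, and the whole proof comes down to showing this stays bounded along a subsequence.

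This is the crux, and the place where the Gauduchon identity \eqref{theta01} and the hypothesis $\widetilde{\Delta}\phi\in L^1$ enter. Writing $I(t):=\int_{\{\phi\le t\}}|\mathrm{d}\phi|^2\,\tfrac{\omega^n}{n!}$, I would apply \eqref{theta01} with $\varphi=\phi$ on the precompact domain $M=\{\phi\le t\}$ (legitimate for regular values $t$, which are co-null by Sard, and then extended to all $t$ by monotonicity of $I$) to get
\[
I(t)=\int_{\{\phi\le t\}}(t-\phi)\,\widetilde{\Delta}\phi\,\tfrac{\omega^n}{n!}\le t\int_X|\widetilde{\Delta}\phi|\,\tfrac{\omega^n}{n!}=:Ct,
\]
using $0\le t-\phi\le t$ on $\{\phi\le t\}$. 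So the sublevel energy grows at most linearly. Since the increments $I(c+1)-I(c)=\int_{A_c}|\mathrm{d}\phi|^2$ are nonnegative and telescope to $I(N)\le CN$, their running averages are bounded by $C$, forcing $\liminf_{c\to\infty}\int_{A_c}|\mathrm{d}\phi|^2\le C$; choose $c_k\to\infty$ realizing this. I expect the main obstacle to be precisely this step: turning the purely $L^1$ control of $\widetilde{\Delta}\phi$ into a uniform bound on the annular gradient energy, which the linear-growth estimate plus a pigeonhole extraction accomplishes.

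Along $c_k$ the first Cauchy--Schwarz factor stays bounded while the second tends to zero, so $\int_X\gamma_{c_k}'(\phi)\,\mathrm{d}\phi\wedge\eta\to 0$; combined with the dominated-convergence limit of the main term this gives $\int_X\mathrm{d}\eta=0$. The points needing care are the reduction to regular values when invoking \eqref{theta01} and the selection of the subsequence; everything else is routine. I note that finite volume (Assumption 1) plays no role here, only $\widetilde{\Delta}\phi\in L^1$ together with $\eta\in L^2$ and $\mathrm{d}\eta\in L^1$.
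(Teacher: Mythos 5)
Your argument is correct and is essentially the proof the paper intends: the lemma is established by running Simpson's Lemma 5.2 argument with the Gauduchon identity \eqref{theta01} supplying the key linear-growth bound $\int_{\{\phi\le t\}}|\mathrm{d}\phi|^2\,\frac{\omega^n}{n!}\le t\int_X|\widetilde{\Delta}\phi|\,\frac{\omega^n}{n!}$, exactly as you do. The only cosmetic difference is that Simpson uses a rescaled cutoff $\chi(\phi/c)$, so the factor $1/c$ from $\mathrm{d}\chi$ beats the $\sqrt{Cc}$ growth directly, whereas you translate the cutoff and extract a good annulus by pigeonhole; both are fine.
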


\medskip

\begin{prop} \label{idbundle01}
Let $(E,\bar{\partial}_E,\theta)$ be a Higgs bundle with a fixed Hermitian metric $H_0$
over a Gauduchon manifold $(M, g)$. Let $H$ be a Hermitian metric on $E$ and $s:=\log(H^{-1}_0H)$. If one of the following two conditions is satisfied:

(1)Suppose that $M$ is a compact manifold with non-empty smooth boundary $\partial M$,  and $H$ is a Hermitian metric on $E$ with the same boundary condition as that of $H_{0}$, i.e. $H|_{\partial M}=H_0|_{\partial M}$.

(2)Suppose that $M$ is a non-compact manifold admitting an exhaustion function $\phi $ with $\int_M|\widetilde{\Delta} \phi|\frac{\omega^{n}}{n!}<+\infty$. Furthermore, we also assume that $|\mathrm{d}\omega^{n-1}|_{g}\in L^2(M)$, $s\in L^{\infty}(M)$ and $D_{H_{0}, \theta }^{1, 0}s \in L^{2}(M)$.

Then we have the following identity:
\begin{equation}\label{eq04021}
\int_M \mathrm{tr}(\Phi(H_0,\theta)s)\frac{\omega^n}{n!}+\int_{M}\langle \Psi(s)(\overline{\partial}_{\theta}s),\overline{\partial}_{\theta}s\rangle_{H_0}\frac{\omega^n}{n!}=\int_M \mathrm{tr}(\Phi(H,\theta)s)\frac{\omega^n}{n!},
\end{equation}
where $\overline{\partial}_{\theta}=\bar{\partial}_E+\theta$ and $\Psi $ is the function which is defined in (\ref{eq3301}).
\end{prop}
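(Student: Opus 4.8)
The plan is to realise the difference of the two sides of \eqref{eq04021} as the endpoint value of a one–parameter family and to integrate the resulting variation by parts, the non-closedness of $\og^{n-1}$ being absorbed through the Gauduchon condition. Since the constant term in $\Phi$ cancels, it suffices to prove
\begin{equation*}
\int_M \tr\big((\Phi(H,\theta)-\Phi(H_0,\theta))\,s\big)\frac{\og^n}{n!}
=\int_M\lag\Psi(s)(\bp_\theta s),\bp_\theta s\rag_{H_0}\frac{\og^n}{n!}.
\end{equation*}
I would connect $H_0$ to $H$ by the path $H_t=H_0e^{ts}$, $t\in[0,1]$, so that $h_t:=H_0^{-1}H_t=e^{ts}$ and the velocity $h_t^{-1}\frac{\pt h_t}{\pt t}=s$ is constant. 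As $s$ is fixed along the path, the left-hand side equals $\int_0^1\!\int_M \tr\big(\frac{\pt}{\pt t}\Phi(H_t,\theta)\cdot s\big)\frac{\og^n}{n!}\,dt$, and the variation formula obtained in the proof of Proposition \ref{P1} (put $\vps=0$ and $h^{-1}\frac{\pt h}{\pt t}=s$ in \eqref{trace1}) gives
\begin{equation*}
\tfrac{\pt}{\pt t}\Phi(H_t,\theta)=\sqrt{-1}\Lambda_\og\big(\bp_E\pt_{H_t}s+[\theta,[\theta^{*_{H_t}},s]]\big).
\end{equation*}

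Next I would integrate by parts in the space directions for each fixed $t$, using $\sqrt{-1}\Lambda_\og\beta\cdot\frac{\og^n}{n!}=\sqrt{-1}\beta\wedge\frac{\og^{n-1}}{(n-1)!}$ for a $(1,1)$-form $\beta$ and moving $\bp_E$ (and, in the Higgs contribution, $\theta$) off the second derivative of $s$. Up to sign this turns the curvature-type integrand into a pairing of $\pt_{H_t,\theta}s=\pt_{H_t}s+[\theta^{*_{H_t}},s]$ with $\bp_\theta s=\bp_E s+[\theta,s]$, plus two kinds of error. The genuine total-derivative terms $\int_M d\big(\tr(\,\cdot\,)\wedge\frac{\og^{n-1}}{(n-1)!}\big)$ vanish: under hypothesis~(1) because $s|_{\pt M}=0$ forces the integrand to vanish on $\pt M$, and under hypothesis~(2) by Lemma \ref{SIMLEMMA}, whose $L^2$-finiteness is guaranteed by $s\in L^\infty(M)$, $D^{1,0}_{H_0,\theta}s\in L^2(M)$ and the mutual boundedness of $H_t$ and $H_0$. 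The remaining contributions carry a factor $\mathrm d\og^{n-1}$; these I would handle exactly as in the model computation \eqref{theta01}, by a further integration by parts pushing all derivatives onto $\og^{n-1}$, so that only the term with $\pt\bp\og^{n-1}$ survives, which vanishes by the Gauduchon condition, the auxiliary total derivatives again being killed by the boundary condition or by Lemma \ref{SIMLEMMA}; this is where the hypothesis $|\mathrm d\og^{n-1}|_g\in L^2(M)$ secures the integrability needed to apply that lemma.

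Finally I would extract the weight $\Psi$. After the integration by parts the integrand is, for each $t$, a pairing of $\bp_\theta s$ with itself in which the metric $H_t$ enters only through conjugation by $h_t=e^{ts}$ (here one uses that $s$ commutes with $h_t$, hence stays self-adjoint with respect to every $H_t$). In a local $H_0$-unitary frame $\{e_\alpha\}$ diagonalising $s$ with eigenvalues $\lmd_\alpha$, this conjugation multiplies the $(\alpha,\beta)$-component of $\bp_\theta s$ by $e^{t(\lmd_\beta-\lmd_\alpha)}$, so the $t$-integration produces exactly
\begin{equation*}
\int_0^1 e^{t(\lmd_\beta-\lmd_\alpha)}\,dt=\frac{e^{\lmd_\beta-\lmd_\alpha}-1}{\lmd_\beta-\lmd_\alpha}=\Psi(\lmd_\alpha,\lmd_\beta),
\end{equation*}
with diagonal value $1$ when $\lmd_\alpha=\lmd_\beta$, assembling the components of $\bp_\theta s$ into $\lag\Psi(s)(\bp_\theta s),\bp_\theta s\rag_{H_0}$ and yielding \eqref{eq04021}. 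The main difficulty is the second step: on a non-K\"ahler, possibly non-compact base one must check that every boundary and $\mathrm d\og^{n-1}$ contribution either vanishes through the Gauduchon identity $\pt\bp\og^{n-1}=0$ or is eliminated by Lemma \ref{SIMLEMMA}, and it is precisely the integrability hypotheses $s\in L^\infty$, $D^{1,0}_{H_0,\theta}s\in L^2$ and $|\mathrm d\og^{n-1}|_g\in L^2$ that make this control possible.
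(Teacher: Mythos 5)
Your argument is correct, but it follows a genuinely different route from the paper. The paper works directly with the endpoint metrics: it writes $\tr((\Phi(H,\theta)-\Phi(H_0,\theta))s)=\langle \sqrt{-1}\Lambda_{\omega}(\bar{\partial}(h^{-1}\partial_{H_0}h)+[\theta,\theta^{*_H}-\theta^{*_{H_0}}]),s\rangle_{H_0}$, performs a single integration by parts (the computation \eqref{theta11}, where the Gauduchon condition and either the boundary condition or Lemma \ref{SIMLEMMA} kill the total-derivative and $\mathrm{d}\omega^{n-1}$ terms), and then invokes the pointwise algebraic identities \eqref{theta21}--\eqref{theta2} quoted from \cite[p.635]{NZ}, in which $\Psi$ appears through the derivative-of-the-exponential relation between $h^{-1}\partial_{H_0}h$ and $\partial_{H_0}s$. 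You instead interpolate along $H_t=H_0e^{ts}$, differentiate $\Phi(H_t,\theta)$ using \eqref{trace1}, integrate by parts for each fixed $t$, and recover $\Psi(\lambda_\alpha,\lambda_\beta)$ as the explicit $t$-integral $\int_0^1 e^{t(\lambda_\beta-\lambda_\alpha)}\,\mathrm{d}t$; in effect you unpack the homotopy that is hidden inside the cited identity of \cite{NZ}, which makes the origin of the weight $\Psi$ transparent and keeps the proof self-contained, at the cost of having to justify the exchange of the $t$- and $M$-integrations and of running the Stokes/Gauduchon/Lemma~\ref{SIMLEMMA} argument for every $t$ (both routine given $s\in L^\infty$, $D^{1,0}_{H_0,\theta}s\in L^2$ and $|\mathrm{d}\omega^{n-1}|_g\in L^2$, since $\partial_{H_t}s=h_t^{-1}(\partial_{H_0}s)h_t$ is uniformly controlled along the path). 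Your integrability checks for the discarded exact terms match the paper's, and the observation that $s$ remains $H_t$-self-adjoint because it commutes with $e^{ts}$ is exactly the point needed to make the conjugation bookkeeping work. Two small points you should make explicit in a final write-up: the Higgs contribution $\tr([\theta,[\theta^{*_{H_t}},s]]s)=\langle h_t^{-1}[\theta,s]^{*_{H_0}}h_t,[\theta,s]^{*_{H_0}}\rangle$-type term is purely algebraic and needs no integration by parts (so it produces no $\mathrm{d}\omega^{n-1}$ error at all), and the local unitary frame diagonalising $s$ exists only on an open dense subset where the eigenvalue multiplicities are locally constant, from which the pointwise identity extends by continuity -- the same device the paper uses via \cite[p.237]{LT95}.
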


\begin{proof}
 Set $h=H^{-1}_0H =e^s$. By the definition, we have
\begin{equation}\label{def}
\textmd{tr}((\Phi(H,\theta)-\Phi(H_0,\theta))s)=\langle \sqrt{-1}\Lambda_{\omega } (\bar{\partial}(h^{-1}\partial_{H_0}h)+[\theta,\theta^{*_H}-\theta^{*_{H_0}}]),s\rangle_{H_0}.
\end{equation}
Using $\textmd{tr}(h^{-1}(\partial_{H_0}h)s)=\textmd{tr}(s\partial_{H_0}s)$, $\tr (s[\theta^{*_{ H_{0}}} , s])=0$ and $ \partial\bar{\partial}\omega^{n-1}=0$,  we have
\begin{equation} \label{theta11}
\begin{split}
&\int_{M}\langle \sqrt{-1}\Lambda_{\omega } (\bar{\partial}(h^{-1}\partial_{H_0}h)),s\rangle_{H_0}\frac{\omega^n}{n!}\\
&=\int_M\sqrt{-1}\bar{\partial}\textmd{tr}(s\partial_{H_0}s)\wedge\frac{\omega^{n-1}}{(n-1)!}
+\int_M\sqrt{-1}\textmd{tr}(h^{-1}\partial_{H_0}h\bar{\partial}s)\wedge\frac{\omega^{n-1}}{(n-1)!}\\
&=\int_M\sqrt{-1}\textmd{tr}(s\partial_{H_0}s)\wedge\overline{\partial }(\frac{\omega^{n-1}}{(n-1)!})
+\int_M \sqrt{-1}\textmd{tr}(h^{-1}\partial_{H_0}h\bar{\partial}s)\wedge \frac{\omega^{n-1}}{(n-1)!}\\
&~~~~+\int_M\sqrt{-1}\bar{\partial}(\textmd{tr}(s\partial_{H_0}s)\wedge\frac{\omega^{n-1}}{(n-1)!})\\
&=\int_M\partial (\frac{\sqrt{-1}}{2}\textmd{tr}(s^{2})\wedge\overline{\partial }(\frac{\omega^{n-1}}{(n-1)!}))+\int_M\sqrt{-1}\bar{\partial}(\textmd{tr}(s\partial_{H_0}s)\wedge\frac{\omega^{n-1}}{(n-1)!})\\
&~~~~+\int_M\sqrt{-1}\textmd{tr}(h^{-1}\partial_{H_0}h\bar{\partial}s)\wedge\frac{\omega^{n-1}}{(n-1)!}\\
&=\int_M\partial (\frac{\sqrt{-1}}{2}\textmd{tr}(s^{2})\wedge\overline{\partial }(\frac{\omega^{n-1}}{(n-1)!}))+\int_M\sqrt{-1}\bar{\partial}(\textmd{tr}(sD_{H_{0}, \theta }^{1, 0}s)\wedge\frac{\omega^{n-1}}{(n-1)!})\\
&~~~~+\int_M\sqrt{-1}\textmd{tr}(h^{-1}\partial_{H_0}h\bar{\partial}s)\wedge\frac{\omega^{n-1}}{(n-1)!}.\\
\end{split}
\end{equation}
In condition (1), by using $s|_{\partial M}=0$ and Stokes formula, in condition (2), by using Lemma \ref{SIMLEMMA}, we have
\begin{equation} \label{theta1}
\int_{M}\langle \sqrt{-1}\Lambda_{\omega } (\bar{\partial}(h^{-1}\partial_{H_0}h)),s\rangle_{H_0}\frac{\omega^n}{n!}=\int_M\sqrt{-1}\textmd{tr}(h^{-1}\partial_{H_0}h\bar{\partial}s)\wedge\frac{\omega^{n-1}}{(n-1)!}.
\end{equation}
 In \cite[p.635]{NZ}, it was proved that
\begin{equation}\label{theta21}\textmd{tr} \sqrt{-1}\Lambda_{\omega }(h^{-1}D^{1,0}_{H,\theta}h\overline{\partial}_{\theta}s)=\langle \Psi(s)(\overline{\partial}_{\theta}s),\overline{\partial}_{\theta}s\rangle_{H_0},\end{equation}
and
\begin{equation}\label{theta2}
\int_M\textmd{tr}(\sqrt{-1}\Lambda_{\omega }[\theta,\theta^{*_H}-\theta^{*_{H_0}}]s)\frac{\omega^n}{n!}
=\int_M\textmd{tr}(\sqrt{-1}h^{-1}[\theta^{*_{H_0}},h][\theta,s])\frac{\omega^{n-1}}{(n-1)!}.
\end{equation}
By (\ref{theta1}), (\ref{theta21}) and (\ref{theta2}), we obtain
\begin{equation}\label{theta22}\int_{M}\langle \sqrt{-1}\Lambda_{\omega } (\bar{\partial}(h^{-1}\partial_{H_0}h)+[\theta,\theta^{*_H}-\theta^{*_{H_0}}]),s\rangle_{H_0}\frac{\omega^n}{n!}
=\int_{M}\langle \Psi(s)(\overline{\partial}_{\theta}s),\overline{\partial}_{\theta}s\rangle_{H_0}\frac{\omega^n}{n!}.\end{equation}
Then (\ref{def}) and (\ref{theta22}) imply (\ref{eq04021}).

\end{proof}

\section{The related heat flow on  Hermitian manifolds}

In this section, we consider the existence of  long-time solutions of the related heat flow (\ref{Flow}). Let $(M, g)$ be a compact Hermitian manifold (with possibly
non-empty boundary), and $(E,\bar{\partial}_E,\theta)$ be a Higgs bundle over
$M$.
If $M$ is closed then we consider the following evolution equation:
\begin{equation} \label{Flow1}
\begin{cases}
H^{-1}\frac{\partial H}{\partial t}=-2(\sqrt{-1}\Lambda_{\omega } (F_{H}+[\theta,\theta^{*_H}])-\lambda \cdot \textmd{Id}_E+\varepsilon \log (H_0^{-1}H)),\\
H(0)=H_0.
\end{cases}
\end{equation}
If $M$ is a compact manifold with non-empty smooth boundary $\partial
M$,  for given data $\widetilde{H}$ on $\partial M$, we consider
the following Dirichlet boundary value problem:
\begin{equation} \label{Flow2}
\begin{cases}
H^{-1}\frac{\partial H}{\partial t}=-2(\sqrt{-1}\Lambda_{\omega } (F_{H}+[\theta,\theta^{*_H}])-\lambda \cdot \textmd{Id}_E+\varepsilon \log (H_0^{-1}H)),\\
H(0)=H_0,\\
H|_{\partial M}=\widetilde{H}.
\end{cases}
\end{equation}
When $\varepsilon =0$, (\ref{Flow}) is just the Hermitian-Yang-Mills flow, the existence of long-time solutions of (\ref{Flow1}) and (\ref{Flow2}) on Hermitian manifolds was proved in \cite{Z}. It is easy to see that the flow (\ref{Flow}) is strictly parabolic, so standard parabolic theory gives the short-time existence.

\begin{prop} \label{shorttime}
 For sufficiently small $T >0 $,
\eqref{Flow1} and \eqref{Flow2} have a smooth solution defined for
$0\leq t <T$.
\end{prop}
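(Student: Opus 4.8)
The plan is to recognize the flow as a quasilinear, strictly parabolic system for the Hermitian-endomorphism-valued unknown $H$ and then to invoke the standard short-time existence theory for such systems. First I would expose the principal part of the right-hand side. Writing the Chern curvature in a holomorphic frame as $F_H=\bar{\partial}(H^{-1}\partial H)$ and expanding, the only second-order contribution is $H^{-1}\bar{\partial}\partial H$, so that
\[
\sqrt{-1}\Lambda_{\omega}F_H=-\tfrac{1}{2}H^{-1}\widetilde{\Delta}H+(\text{terms of order}\le 1\text{ in }H).
\]
Multiplying the flow through by $H$ therefore puts it in the schematic form
\[
\frac{\partial H}{\partial t}=\widetilde{\Delta}H+Q(H,\partial H,\bar{\partial}H),
\]
where $\widetilde{\Delta}=2g^{i\bar{j}}\partial_i\partial_{\bar{j}}$ and $Q$ collects all lower-order contributions. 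The remaining pieces $[\theta,\theta^{*_H}]$, $\varepsilon\log(H_0^{-1}H)$ and $\lambda\cdot\mathrm{Id}_E$ feed only into $Q$ and are in fact algebraic in $H$: the adjoint $\theta^{*_H}$ is algebraic in $H$ and $H^{-1}$, and $\log(H_0^{-1}H)$ is a smooth function of $H$ on the open cone of positive-definite metrics, neither involving any derivative of $H$.

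Next I would read off the principal symbol to confirm strict parabolicity. The leading operator $\widetilde{\Delta}$ acts on each matrix entry of $H$ with symbol $2g^{i\bar{j}}\xi_i\bar{\xi}_j$, which is positive for every $\xi\neq 0$ because $g$ is a Hermitian metric; hence the linearization of the right-hand side at any positive-definite $H$ is uniformly elliptic and the system is strictly parabolic, exactly the parabolicity asserted in the remark preceding the statement.

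Then I would apply the standard theory. For the closed manifold \eqref{Flow1} I would linearize about the constant-in-time metric $H_0$ and solve in the parabolic H\"older spaces $C^{2+\alpha,\,1+\alpha/2}$ by a contraction-mapping (or inverse-function-theorem) argument; since $H_0$ is positive definite and positivity is an open condition, the solution remains a genuine metric on a short interval $[0,T)$ by continuity, and parabolic bootstrapping promotes it to a smooth solution. For the boundary problem \eqref{Flow2} the Dirichlet condition $H|_{\partial M}=\widetilde{H}$ is the classical complementing condition for a second-order parabolic system, and with the compatibility of the initial and boundary data the problem falls within the Ladyzhenskaya--Solonnikov--Ural'tseva framework, giving a unique short-time smooth solution.

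The step demanding the most care -- rather than a genuine obstacle -- is the bookkeeping verifying that every term swept into $Q$ is of order at most one: the only first-order contribution is the cross term $-H^{-1}(\bar{\partial}H)H^{-1}(\partial H)$ produced when one expands $\bar{\partial}(H^{-1}\partial H)$, while $[\theta,\theta^{*_H}]$, $\varepsilon\log(H_0^{-1}H)$ and $\lambda\cdot\mathrm{Id}_E$ are zeroth order in $H$. Once one checks that no second derivatives of $H$ survive outside the $\widetilde{\Delta}H$ term, the principal symbol computed above is unaffected and the cited short-time existence theorems apply verbatim to both \eqref{Flow1} and \eqref{Flow2}.
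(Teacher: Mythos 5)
Your proposal is correct and follows essentially the same route as the paper, which disposes of this proposition by observing that the flow is strictly parabolic (the remark immediately preceding the statement) and invoking standard short-time existence theory for quasilinear parabolic systems. Your expansion of the principal symbol computation, the verification that $[\theta,\theta^{*_H}]$ and $\varepsilon\log(H_0^{-1}H)$ are zeroth order, and the appeal to parabolic H\"older theory for the closed case and the Dirichlet problem are exactly the details the paper leaves implicit.
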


Next, following the arguments in \cite[Lemma 19]{DON85} and \cite[Lemma 6.4]{SIM},  we will prove the long-time existence.

\begin{lem}  \label{C0ofdistance}
Suppose that a smooth solution $H(t)$ of
\eqref{Flow1} or \eqref{Flow2} is defined for $0\leq t < T <+\infty $. Then
$H(t)$ converge in $C^{0}$-topology to some continuous
non-degenerate metric $H_{T}$ as $t\rightarrow T$.
\end{lem}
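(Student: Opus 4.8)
The plan is to reduce the entire statement to a single, time-independent $C^{0}$ bound on the Einstein tensor
$\Phi_{\varepsilon}:=\sqrt{-1}\Lambda_{\omega}(F_{H}+[\theta,\theta^{*_{H}}])-\lambda\cdot\mathrm{Id}_{E}+\varepsilon\log(H_{0}^{-1}H)$, which along the flow satisfies $H^{-1}\partial_{t}H=-2\Phi_{\varepsilon}$. By Proposition \ref{P1}, inequality (\ref{dec}), the quantity $|\Phi_{\varepsilon}|^{2}_{H}$ is a subsolution of the heat operator $\partial_{t}-\widetilde{\Delta}$. First I would invoke the parabolic maximum principle on the compact manifold $M$ to conclude $\sup_{M}|\Phi_{\varepsilon}(t)|_{H}\le C_{0}$ for all $t\in[0,T)$, where $C_{0}$ is controlled by the parabolic boundary data: in the closed case (\ref{Flow1}) simply by $\sup_{M}|\Phi_{\varepsilon}(0)|_{H_{0}}$, and in the Dirichlet case (\ref{Flow2}) in addition by $\sup_{\partial M\times[0,T)}|\Phi_{\varepsilon}|$, which is finite and time-independent because $H|_{\partial M}=\widetilde{H}$ is fixed.

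Since $H^{-1}\partial_{t}H=-2\Phi_{\varepsilon}$, this yields the uniform speed bound $\sup_{M}|H^{-1}\partial_{t}H|_{H}\le 2C_{0}$. Viewing $t\mapsto H(x,t)$ as a curve in the space of positive-definite Hermitian matrices with its natural $GL(r,\mathbb{C})$-invariant metric, whose speed is exactly $|H^{-1}\partial_{t}H|_{H}$, the length of each segment is at most $2C_{0}(t_{2}-t_{1})$. Integrating from $t=0$ bounds the total distance travelled by $2C_{0}T<+\infty$, so all eigenvalues of $H_{0}^{-1}H(t)$ remain in a fixed compact subset of $(0,+\infty)$; hence the metrics $H(t)$ are mutually bounded, uniformly in $t$, which will give non-degeneracy of any limit. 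On this bounded region the Donaldson distance $\sigma$ is comparable to the squared geodesic distance, so $\sup_{M}\sigma(H(t_{1}),H(t_{2}))\le C'(t_{2}-t_{1})^{2}$ for $0\le t_{1}\le t_{2}<T$, i.e.\ $t\mapsto H(t)$ is uniformly Cauchy in $\sigma$ as $t\to T$. As a clean alternative for propagating this in time I would apply Proposition \ref{P2} to the two solutions $H(t)$ and $H(t+\tau)$ (both solve the autonomous flow): it gives $(\widetilde{\Delta}-\partial_{t})\sigma(H(t),H(t+\tau))\ge 0$, whence by the maximum principle $\sup_{M}\sigma(H(t),H(t+\tau))$ is non-increasing in $t$ — in the Dirichlet case one uses that $\sigma(H(t),H(t+\tau))$ vanishes on $\partial M$ — so that $\sup_{M}\sigma(H(t_{1}),H(t_{2}))\le\sup_{M}\sigma(H(0),H(t_{2}-t_{1}))$, and the speed bound makes the right-hand side tend to $0$. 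Uniform Cauchyness then produces a continuous limit $H_{T}$ with $\sup_{M}\sigma(H(t),H_{T})\to 0$, and the uniform mutual boundedness forces $H_{T}$ to be non-degenerate.

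I expect the main obstacle to be the time-independent $C^{0}$ bound on $\Phi_{\varepsilon}$ in the first step: the maximum principle must be run with the complex Laplacian $\widetilde{\Delta}$, which differs from the Beltrami--Laplacian by a first-order term $\langle V,\nabla\,\cdot\,\rangle_{g}$, and in the boundary case one must verify carefully that all parabolic boundary data are genuinely controlled (here the fixed Dirichlet datum $\widetilde{H}$ is what makes this work). Granting Proposition \ref{P1}, this is the crux; the subsequent comparison between $|H^{-1}\partial_{t}H|_{H}$ and the distance $\sigma$, together with the passage to the uniform limit and the verification of non-degeneracy, are then routine.
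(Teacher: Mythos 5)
Your proposal is correct and follows essentially the same route as the paper: Proposition \ref{P1} plus the maximum principle give the time-independent bound on $\Phi_{\varepsilon}$ (with the lateral boundary data controlled because $\partial_t H=0$ on $\partial M$ forces $\Phi_{\varepsilon}=-\tfrac12 H^{-1}\partial_t H$ to vanish there), Proposition \ref{P2} applied to time-shifted solutions gives the uniform Cauchy property in $\sigma$, and integrating the speed bound $|H^{-1}\partial_t H|_H\le 2C_0$ yields the uniform bound on $\sigma(H(t),H_0)$ and hence non-degeneracy of the limit. Your primary derivation of the Cauchy property directly from the speed bound is just a more quantitative variant of the paper's continuity-at-$t=0$ argument, resting on the same two propositions.
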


\begin{proof}
Given $\epsilon >0$, by continuity at $t=0$ we can
find a $\delta$ such that

$$
\sup_{M} \sigma (H(t_{0}) , H(t'_{0})) <\epsilon
$$
for $0< t_{0}, t'_{0} <\delta $. Then Proposition \ref{P2} and the maximum
principle imply that
$$
\sup_{M} \sigma (H(t) , H(t'))<\epsilon
$$
for all $t, t' > T-\delta$. This implies that $H(t)$ are
uniformly Cauchy and  converge to a continuous limiting
metric $H_{T}$. On the other hand, by Proposition \ref{P1}, we know
that
\begin{equation*}\label{z1}
\sup_{M\times [0, T)}|\sqrt{-1}\Lambda_{\omega } (F_{H(t)}+[\theta,\theta^{*_H(t)}])-\lambda \cdot \textmd{Id}_E+\varepsilon \log (H_0^{-1}H(t))|_{H(t)}<B,
\end{equation*}
where $B$ is a uniform constant depending only on the initial data $H_{0}$. Then using
\begin{equation*}
|\frac{\partial }{\partial t}(\log \textmd{tr} h)|_H\leq 2|\sqrt{-1}\Lambda_{\omega } (F_{H}+[\theta,\theta^{*_H}])-\lambda \cdot \textmd{Id}_E+\varepsilon \log (H_0^{-1}H)|_{H},
\end{equation*}
and
\begin{equation*}
|\frac{\partial }{\partial t}(\log \textmd{tr} h^{-1})|_H\leq
2|\sqrt{-1}\Lambda_{\omega } (F_{H}+[\theta,\theta^{*_H}])-\lambda \cdot \textmd{Id}_E+\varepsilon \log (H_0^{-1}H)|_{H},
\end{equation*}
one can conclude that
$\sigma (H(t), H_{0})$ are bounded uniformly on $M\times [0, T)$, therefore $H_{T}$ is a
non-degenerate metric.
\end{proof}

For further consideration, we recall the following lemma.

\begin{lem} [Lemma 3.3 in \cite{Z}]\label{C1ofh}
 Let $M$ be a compact  Hermitian manifold without boundary (with non-empty boundary). Let $H(t)$, $0\leq t
<T$, be any one-parameter family of Hermitian metrics on the Higgs bundle $E$ over $M$ (and satisfying Dirichlet boundary
condition), and suppose $H_{0}$ is the initial Hermitian metric.
If $H(t)$ converge in the $C^{0}$ topology to some continuous
metric $H_{T}$ as $t\rightarrow T$, and if $\sup_{M} |\Lambda_{\omega }
F_{H(t)}|_{H_{0}}$ is bounded uniformly in $t$, then $H(t)$ are
bounded in $C^{1}$ and also bounded in $L_{2}^{p}$ (for any
$1<p<+\infty $) uniformly in $t$.
\end{lem}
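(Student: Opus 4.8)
The plan is to reduce the statement to an a priori elliptic estimate for the relative endomorphism $h(t)=H_0^{-1}H(t)$, which is positive and $H_0$-self-adjoint. Since $H(t)$ converges in $C^{0}$ to the non-degenerate metric $H_T$, the endomorphisms $h(t)$ and $h(t)^{-1}$ are uniformly bounded in $C^{0}$. Writing the curvature in the background gauge,
\begin{equation*}
\Lambda_{\omega}F_{H(t)}=\Lambda_{\omega}F_{H_0}+\Lambda_{\omega}\bar{\partial}_E(h^{-1}\partial_{H_0}h),
\end{equation*}
the hypothesis $\sup_M|\Lambda_{\omega}F_{H(t)}|_{H_0}\le C$ together with the fixed background term shows that $G_t:=\Lambda_{\omega}\bar{\partial}_E(h^{-1}\partial_{H_0}h)$ is uniformly bounded in $L^{\infty}$. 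Expanding the derivative and multiplying by $h$ on the left then gives the quasilinear elliptic equation
\begin{equation*}
\Lambda_{\omega}\bar{\partial}_E\partial_{H_0}h=hG_t+\Lambda_{\omega}\bigl(\bar{\partial}_Eh\,h^{-1}\wedge\partial_{H_0}h\bigr),
\end{equation*}
whose principal part is the covariant complex Laplacian and whose right-hand side is bounded plus a term quadratic in $\partial_{H_0}h$ with coefficients controlled by the $C^{0}$ bounds on $h^{\pm1}$.

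The heart of the argument is a uniform $L_1^2$ estimate extracted from the Gauduchon condition. Taking traces and using that $h$ is positive and $H_0$-self-adjoint, I would show
\begin{equation*}
\widetilde{\Delta}\,\tr(h)\ge c\,|\partial_{H_0}h|_{H_0}^2-C,\qquad \widetilde{\Delta}\,\tr(h^{-1})\ge c\,|\partial_{H_0}h|_{H_0}^2-C,
\end{equation*}
where the decisive positive terms are $\sum_i|h^{-1/2}\nabla_i h|_{H_0}^2$ and $\sum_i|h^{1/2}\nabla_i(h^{-1})|_{H_0}^2$. Since $\partial\bar{\partial}\omega^{n-1}=0$, two integrations by parts give $\int_M\widetilde{\Delta}f\,\frac{\omega^n}{n!}=0$ for any function on the closed manifold (in the boundary case the Dirichlet condition $h|_{\partial M}=\mathrm{Id}$ controls the boundary contributions), whence
\begin{equation*}
c\int_M|\partial_{H_0}h|_{H_0}^2\,\frac{\omega^n}{n!}\le C\,\mathrm{Vol}(M),
\end{equation*}
uniformly in $t$. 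This is the uniform $L_1^2$ bound on $h$.

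With $h\in L^{\infty}\cap L_1^2$ solving the quasilinear equation, I would then bootstrap using the $L^p$-theory for the elliptic operator $\Lambda_{\omega}\bar{\partial}_E\partial_{H_0}$: once $\partial_{H_0}h\in L^{q}$ the quadratic term lies in $L^{q/2}$, so the equation yields $h\in L_2^{q/2}$, and Sobolev embedding improves the integrability of $\partial_{H_0}h$; iterating produces $h\in L_2^{p}$ for every $p<+\infty$, and hence $h\in C^{1}$ via $L_2^{p}\hookrightarrow C^{1}$ for $p>2n$. All constants entering depend only on the $C^{0}$ bounds on $h^{\pm1}$, on $\sup_M|\Lambda_{\omega}F_{H(t)}|_{H_0}$, and on fixed background data, so every bound is uniform in $t$, which is exactly the assertion.

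The main obstacle is precisely the quadratic gradient term $\Lambda_{\omega}(\bar{\partial}_Eh\,h^{-1}\wedge\partial_{H_0}h)$, which has critical growth: the naive bootstrap from the $L_1^2$ bound stalls at the Sobolev-critical exponent, and differentiating the equation to run a Bochner or maximum-principle argument on $|\partial_{H_0}h|_{H_0}^2$ is unavailable, because we control only the contracted curvature $\Lambda_{\omega}F_{H}$ and not its derivatives. Crossing this threshold is where the two-sided $C^{0}$ control on $h$ must be exploited, through a Moser iteration on the bounded subsolutions $\tr(h)$ and $\tr(h^{-1})$ following the elliptic estimates of Donaldson and Simpson, and where, in the non-K\"ahler setting, the torsion $\mathrm{d}\omega^{n-1}$ enters the integrations by parts (bounded on the compact $M$) and must be carried along throughout.
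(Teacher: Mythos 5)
First, a remark on the comparison: the paper does not prove this lemma at all --- it is imported verbatim from \cite{Z} (Lemma 3.3 there), which in turn follows Donaldson's Lemma 19 in \cite{DON85} and the Uhlenbeck--Yau interior estimates. So your attempt can only be measured against that standard argument. Your setup is the right one and the first two stages are sound: the gauge-fixed identity $\Lambda_{\omega}F_{H}=\Lambda_{\omega}F_{H_0}+\Lambda_{\omega}\bar{\partial}_E(h^{-1}\partial_{H_0}h)$, the two-sided $C^0$ control on $h^{\pm1}$ from non-degeneracy of $H_T$, and the uniform $L_1^2$ bound obtained by integrating $\widetilde{\Delta}\,\mathrm{tr}(h)$ against the Gauduchon condition are all correct (modulo a sign check on the boundary term for $\mathrm{tr}(h)+\mathrm{tr}(h^{-1})-2r$ in the Dirichlet case, which vanishes on $\partial M$ and is nonnegative inside, so its outward normal derivative has the right sign).

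The genuine gap is in your treatment of the critical quadratic term $\Lambda_{\omega}(\bar{\partial}_Eh\,h^{-1}\wedge\partial_{H_0}h)$, which you correctly identify as ``the main obstacle'' but then propose to cross by ``Moser iteration on the bounded subsolutions $\mathrm{tr}(h)$ and $\mathrm{tr}(h^{-1})$.'' That does not work: Moser iteration upgrades integral bounds on a subsolution to sup bounds, and you already have sup bounds on $\mathrm{tr}(h^{\pm1})$; it produces no new gradient integrability and cannot move the bootstrap past the Sobolev-critical exponent. The ingredient you are missing is the full strength of the hypothesis that $H(t)$ \emph{converges in $C^0$ to a continuous limit}, which is strictly stronger than the two-sided bound you extract from it and is exactly what breaks the criticality. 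Because the family $h(t)$ is uniformly equicontinuous, on a ball $B_r(x_0)$ of small radius one has $\sup_{B_r}|h-h(x_0)|<\epsilon$ uniformly in $t$; pairing the equation with $h-h(x_0)$ (times a cutoff) and absorbing then gives \emph{smallness} of the local energy $\int_{B_r}|\partial_{H_0}h|^2\le\epsilon'(r)$, not merely boundedness. This Morrey-type smallness is what allows the quadratic term to be absorbed into the left-hand side of the interior $L^p$ elliptic estimate (an $\epsilon$-regularity argument in the style of Uhlenbeck--Yau, reproduced in \cite{LT95}), after which your bootstrap to $L_2^p$ for all $p$ and hence $C^1$ does go through. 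Without this step the proof is incomplete, and indeed the statement would be false if one assumed only uniform bounds on $h^{\pm1}$ rather than $C^0$ convergence.
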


\begin{prop} \label{compactlongtime}
 \eqref{Flow1} and \eqref{Flow2}
have a unique solution $H(t)$ which exists for $0\leq t <+\infty$.
\end{prop}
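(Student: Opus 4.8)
The plan is to run the standard maximal-interval (continuity) argument for a strictly parabolic equation, feeding in the a priori estimates already established. By Proposition \ref{shorttime} the flow \eqref{Flow1} (resp. \eqref{Flow2}) admits a smooth solution on a maximal time interval $[0, T_{\max})$, so it suffices to prove $T_{\max} = +\infty$. For uniqueness, I would argue as follows: suppose $H(t)$ and $K(t)$ are two solutions with the same initial data $H_0$ (and, in the Dirichlet case, the same boundary data $\widetilde{H}$). Then $u(x,t):=\sigma(H(t),K(t))$ vanishes on the parabolic boundary, and Proposition \ref{P2} gives $(\partial_t-\widetilde{\Delta})u\leq 0$ on $M\times[0,T)$. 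The maximum principle for the parabolic operator $\partial_t-\widetilde{\Delta}$ then forces $u\leq 0$; since $\sigma\geq 0$ always, we conclude $u\equiv 0$, i.e.\ $H(t)=K(t)$.

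For long-time existence I argue by contradiction, assuming $T_{\max}<+\infty$. Lemma \ref{C0ofdistance} already guarantees that $H(t)$ converges in $C^0$ to a continuous non-degenerate metric $H_{T_{\max}}$ as $t\to T_{\max}$, and in particular that $H(t)$ and $H_0$ are uniformly mutually bounded on $M\times[0,T_{\max})$. To upgrade this $C^0$ convergence I would invoke Lemma \ref{C1ofh}, whose only nontrivial hypothesis is a uniform bound on $\sup_M|\Lambda_{\omega}F_{H(t)}|_{H_0}$. This bound I would extract from Proposition \ref{P1}: inequality \eqref{dec} together with the maximum principle yields a uniform-in-$t$ bound on $|\sqrt{-1}\Lambda_{\omega}(F_{H(t)}+[\theta,\theta^{*_{H(t)}}])-\lambda\cdot\mathrm{Id}_E+\varepsilon\log(H_0^{-1}H(t))|_{H(t)}$. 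Since the $C^0$ estimate on $\sigma(H(t),H_0)$ controls $\log(H_0^{-1}H(t))$ uniformly, while $\theta$ is fixed and $\theta^{*_{H(t)}}$ is controlled by the mutual boundedness of $H(t)$ and $H_0$, the terms $[\theta,\theta^{*_{H(t)}}]$ and $\varepsilon\log(H_0^{-1}H(t))$ are uniformly bounded; subtracting them leaves a uniform bound on $\Lambda_{\omega}F_{H(t)}$, and, the norms $|\cdot|_{H(t)}$ and $|\cdot|_{H_0}$ being comparable, this is exactly the hypothesis of Lemma \ref{C1ofh}. That lemma then provides uniform $C^1$ and $L_2^p$ bounds on $H(t)$.

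Next I would bootstrap. Writing the flow as a strictly parabolic equation for the endomorphism $h(t)=H_0^{-1}H(t)$ (equivalently for $s(t)=\log h(t)$), the uniform $C^1$ and $L_2^p$ control, combined with the $C^0$ convergence, feeds into the interior parabolic $L^p$- and Schauder-estimates to give uniform bounds on all higher derivatives on subintervals approaching $T_{\max}$ (in the Dirichlet case one uses the boundary Schauder estimates together with the fixed boundary data). Hence $H(t)\to H_{T_{\max}}$ in $C^\infty$, with $H_{T_{\max}}$ a smooth non-degenerate metric satisfying the prescribed boundary condition. Taking $H_{T_{\max}}$ as new initial data and applying Proposition \ref{shorttime} again extends the solution smoothly past $T_{\max}$, contradicting maximality; therefore $T_{\max}=+\infty$.

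The hard part will be the bootstrap step: although Lemmas \ref{C0ofdistance} and \ref{C1ofh} supply the crucial low-order estimates, one must still check carefully that the parabolic regularity estimates are uniform up to the finite time $T_{\max}$ (and compatible with the boundary data in the Dirichlet problem), so that the limit $H_{T_{\max}}$ is genuinely smooth and usable as initial data for restarting the flow. The companion point requiring care is the derivation of the curvature bound $\sup_M|\Lambda_{\omega}F_{H(t)}|_{H_0}<+\infty$ from Proposition \ref{P1}, since it is precisely this bound that unlocks Lemma \ref{C1ofh} and hence the whole extension argument.
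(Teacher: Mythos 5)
Your proposal is correct and follows essentially the same route as the paper: short-time existence from Proposition \ref{shorttime}, the $C^0$ limit from Lemma \ref{C0ofdistance}, the uniform bound on $\Lambda_{\omega}F_{H(t)}$ extracted from \eqref{dec} to unlock Lemma \ref{C1ofh}, a parabolic bootstrap (the paper cites Hamilton's method, exploiting that the equation is quadratic in the first derivatives of $H$) to get $C^\infty$ convergence and restart the flow, and uniqueness from Proposition \ref{P2} plus the maximum principle. The only difference is that you spell out the intermediate steps (subtracting the $[\theta,\theta^{*_H}]$ and $\varepsilon\log h$ terms, and the Schauder/$L^p$ machinery) that the paper leaves implicit.
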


\begin{proof}
 Proposition \ref{shorttime} guarantees that a solution exists for
a short time. Suppose that the solution $H(t)$ exists for $0\leq
t<T < +\infty $. By Lemma \ref{C0ofdistance},  $H(t)$ converges in $C^{0}$ to a
non-degenerate continuous limit metric $H(T)$ as $t\rightarrow T
$. Since $t<+\infty$, \eqref{dec} implies $\sup_{M} |\Lambda_{\omega }
F_{H(t)}|_{H_{0}}$ is bounded uniformly in $[0, T)$. Then by Lemma \ref{C1ofh}, $H(t)$ are
bounded in $C^{1}$ and also bounded in $L_{2}^{p}$ (for any
$1<p<+\infty $) uniformly in $t$. Since \eqref{Flow1} and \eqref{Flow2} is quadratic in the first derivative of $H$ we can
apply Hamilton's method \cite{HAMILTON} to deduce that $H(t)\rightarrow H(T)$
in $C^{\infty}$, and the solution can be continued past $T$. Then
\eqref{Flow1} and \eqref{Flow2} have a solution $H(t)$
defined for all time.

From Proposition \ref{P2} and the maximum principle, it is easy to
conclude the uniqueness of the solution.

\end{proof}

\begin{prop} \label{noncompactc1} Suppose $H(t)$ is a long-time solution of the flow (\ref{Flow}) on compact Hermitian manifold $\overline{M}$ (with nonempty smooth boundary $\partial M$). Set $h(t)=H_{0}^{-1}H(t)$ and assume that there exists a constant $\overline{C}_0$ such that
$$\sup_{(x,t)\in \overline{M}\times[0,+\infty )}|\log h|_{H_0}\leq \overline{C}_0.
$$
Then, for any compact subset $\Omega \subset \overline{M} $, there exists a uniform constant $\overline{C}_1$ depending only on $\overline{C}_0$, $d^{-1}$ and the geometry of $\tilde{\Omega }$ such that
\begin{equation}\label{CC1}\sup_{(x,t)\in \Omega \times[0,+\infty )}|h^{-1}\partial_{H_0}h|_{H_0}\leq \overline{C}_1,
\end{equation}
where $d$ is the distance of $\Omega $ to $\partial M$ and $\tilde{\Omega }=\{x \in \overline{M}|\mathrm{dist}(x, \Omega)\leq \frac{1}{2}d\}$.
\end{prop}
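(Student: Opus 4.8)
The plan is to run a Bernstein-type interior estimate: I will derive a maximum-principle inequality for a cut-off modification of the gradient density $e:=|h^{-1}\partial_{H_0}h|^2_{H_0}$, and close it by adding a large multiple of the \emph{a priori bounded} quantity $p:=\tr h+\tr h^{-1}$, whose evolution carries a good negative gradient term. As a preliminary normalization, note that $\sup|\log h|_{H_0}\le\overline{C}_0$ forces the eigenvalues of $h$ into $[e^{-\overline{C}_0},e^{\overline{C}_0}]$, so $H(t)$ and $H_0$ are uniformly equivalent and the norms $|\cdot|_{H_0}$, $|\cdot|_{H(t)}$ of tensors are comparable with constants depending only on $\overline{C}_0$. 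The crucial input is Proposition \ref{P1}: the quantity $\Phi_{\varepsilon}=\sqrt{-1}\Lambda_{\omega}(F_H+[\theta,\theta^{*_H}])-\lambda\cdot\mathrm{Id}_E+\varepsilon\log h$ obeys a uniform bound $\sup_{\overline{M}\times[0,+\infty)}|\Phi_{\varepsilon}|_{H}\le B$, and since $\kappa:=h^{-1}\partial_t h=-2\Phi_{\varepsilon}$ along the flow, both $\kappa$ and $\Lambda_{\omega}F_H$ are uniformly bounded. I record this because it is exactly what prevents the nonlinear terms below from being cubic.

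Next I establish two differential inequalities. Writing $w:=h^{-1}\partial_{H_0}h$ and using $\Lambda_{\omega}F_H=\Lambda_{\omega}F_{H_0}+\Lambda_{\omega}\overline{\partial}_E w$, a direct computation gives $\partial_t w=\partial_{H_0}\kappa+[w,\kappa]$; commuting $\partial_{H_0}$ past $\Lambda_{\omega}\overline{\partial}_E$ (which produces $F_{H_0}$-curvature terms and first-order torsion terms from $d\omega$, since $g$ is only Gauduchon) and applying the Bochner identity $(\partial_t-\widetilde{\Delta})e=2\,\mathrm{Re}\langle(\partial_t-\widetilde{\Delta})w,w\rangle_{H_0}-2|\nabla w|^2$, one finds that every term is at worst linear in $w$ or of the form $(\text{first order})\times w$, because $\kappa$ and $\Lambda_{\omega}F_H$ are bounded. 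Absorbing the first-order terms into $-2|\nabla w|^2$ by Cauchy--Schwarz yields
\[
\Big(\frac{\partial}{\partial t}-\widetilde{\Delta}\Big)e\le -|\nabla w|^2+C_1(1+e),
\]
where $C_1$ depends on $\overline{C}_0$, $B$ and $\sup_{\tilde{\Omega}}(|F_{H_0}|+|\theta|+|\nabla\theta|+|d\omega|)$, i.e.\ on the geometry of $\tilde{\Omega}$. Repeating the computation behind Proposition \ref{P2} with the fixed reference $H_0$ in place of a second solution (the non-cancelling $F_{H_0}$- and $\theta$-terms are merely bounded), the convexity of $\tr$ supplies the good sign
\[
\Big(\frac{\partial}{\partial t}-\widetilde{\Delta}\Big)(\tr h+\tr h^{-1})\le -c_2\,e+C_3,
\]
with $c_2>0$ and $C_3$ of the same dependence.

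For the localization, fix a cut-off $\varphi$ with $\varphi\equiv1$ on $\Omega$, $\mathrm{supp}\,\varphi$ in the interior of $\tilde{\Omega}$, $0\le\varphi\le1$ and $|\nabla\varphi|^2+|\widetilde{\Delta}\varphi|\le C\,d^{-2}$; since $\tilde{\Omega}$ stays at distance $\ge d/2$ from $\partial M$ this is a genuine interior cut-off. Set $u:=\varphi^2 e+A\,p$. Using $|\nabla e|\le 2\sqrt{e}\,|\nabla w|$ to absorb the cross term $\langle\nabla(\varphi^2),\nabla e\rangle$ into $-\varphi^2|\nabla w|^2$, the two inequalities combine to $(\partial_t-\widetilde{\Delta})u\le(C_4-A c_2)e+C_5$; choosing $A$ so large that $C_4-A c_2\le-1$ gives $(\partial_t-\widetilde{\Delta})u\le -e+C_6$. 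Now apply the maximum principle to $u$ on $\tilde{\Omega}\times[0,T_1]$ for arbitrary $T_1$. On the parabolic boundary, $p\le 2\,\mathrm{rank}(E)\,e^{\overline{C}_0}$ by the $C^0$ bound, $\varphi=0$ on $\partial\tilde{\Omega}$, and $e=0$ at $t=0$ (since the flow starts at $H(0)=H_0$, so $h(0)=\mathrm{Id}_E$), whence $u$ is bounded there; at an interior maximum $(x_0,t_0)$ we have $(\partial_t-\widetilde{\Delta})u\ge0$, which forces $e(x_0,t_0)\le C_6$ and hence $u(x_0,t_0)\le C_6+2A\,\mathrm{rank}(E)\,e^{\overline{C}_0}$. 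Thus $\sup_{\tilde{\Omega}\times[0,T_1]}u$ is bounded independently of $T_1$, and on $\Omega$ (where $\varphi\equiv1$ and $A p\ge0$) this gives $e=\varphi^2 e\le u\le\overline{C}_1^{\,2}$, which is the assertion \eqref{CC1}.

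The main obstacle is the Bochner inequality for $e$. A priori the nonlinearities coming from $[w,\kappa]$ and from $\partial_{H_0}\Lambda_{\omega}\overline{\partial}_E w$ look cubic in $w$ and cannot be absorbed into the single good term $-|\nabla w|^2$. The point that makes the estimate work is that the dangerous factor is precisely $\kappa=-2\Phi_{\varepsilon}$ (equivalently $\Lambda_{\omega}F_H$), which Proposition \ref{P1} controls uniformly; this demotes those terms to linear in $e$, after which the Cauchy--Schwarz absorption of the genuinely first-order torsion terms — present because $g$ is only Gauduchon, so $d\omega\neq0$ — closes the inequality. Keeping careful track of the dependence of all constants on $\overline{C}_0$, $d^{-1}$ and the fixed data on $\tilde{\Omega}$ is the remaining bookkeeping.
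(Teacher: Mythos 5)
Your proposal is correct and follows essentially the same route as the paper: both derive a Bochner-type inequality for $|h^{-1}\partial_{H_0}h|^2$ whose bad coefficients are tamed by the uniform bound on $\Phi_\varepsilon$ from Proposition \ref{P1}, couple it with the evolution inequality for the trace of $h$ (whose good gradient term dominates a positive multiple of $|h^{-1}\partial_{H_0}h|^2$), and conclude via a cut-off test function of the form $\psi^2|\mathcal{T}|^2 + W\psi^2\mathrm{tr}\,h$ and the parabolic maximum principle. The only cosmetic differences are your use of $\mathrm{tr}\,h+\mathrm{tr}\,h^{-1}$ in place of $\mathrm{tr}\,h$ and a single cut-off in place of the paper's two nested ones.
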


\begin{proof}

We will follow the argument in \protect {\cite[Lemma 2.4]{LZZ}} to get local uniform $C^{1}$-estimate.
Let $\mathcal{T}=h^{-1}\partial_{H_0}h$. Direct computations give us that
\begin{equation} \label{noncompactc1eq1}
(\widetilde{\Delta}-\frac{\partial}{\partial t}) \textmd{tr}h\geq -2\textmd{tr}(\sqrt{-1}\Lambda_{\omega }(\bar{\partial}hh^{-1}\partial_{H_0}h))
+2\textmd{tr}(h\Phi(H_0,\theta))+2\varepsilon \textmd{tr}(h\log h),
\end{equation}
\begin{equation*}
\frac{\partial}{\partial t}\mathcal{T}=\partial_H(h^{-1}\frac{\partial}{\partial t}h),
\end{equation*}
and
\begin{align} \label{noncompactc1eq2}
(\widetilde{\Delta}-\frac{\partial}{\partial t})|\mathcal{T}|^2_{H}&\geq
|\nabla_{H}\mathcal{T}|^2_{H}-\check{C}_1(|\Lambda_{\omega } F_H|_H+|F_{H_0}|_{H}+|\theta|^2_{H}+|Rm(g)|_{g}+|\nabla_g J|_g^2+\varepsilon)|\mathcal{T}|^2_H\notag\\
&~~~-\check{C}_2|\nabla_{H_0}(\Lambda_{\omega } F_{H_0})|_{H}|\mathcal{T}|_{H}-4|\nabla_{H_0}\theta|^2_{H}-\varepsilon|\log h|^2_H,
\end{align}
where $J$ is the complex structure on $M$ and positive constants $\check{C}_1$, $\check{C}_2$ depend only on the dimension $n$ and the rank $r$.
By \eqref{noncompactc1eq2} and Proposition \ref{P1}, we have
\begin{equation} \label{noncompactc1eq3}
(\widetilde{\Delta}-\frac{\partial}{\partial t})|\mathcal{T}|^2_{H}\geq
|\nabla_{H}\mathcal{T}|^2_{H}-\check{C}_3|\mathcal{T}|^2_H-\check{C}_3
\end{equation}
on the domain $\tilde{\Omega }\times [0,+\infty )$, where  $\check{C}_3$ is a uniform constant  depending only on $\overline{C}_0$, $\max_{\tilde{\Omega }}|\theta|_{H_{0}}$  and the geometry of $\tilde{\Omega }$.

Setting  $\overline{\Omega }=\{x \in \overline{M} | \mathrm{dist} (x, \Omega )\leq \frac{1}{4}d\}$. Let $\psi_1$, $\psi_2$ be non-negative cut-off functions satisfying:
 \begin{gather*}
     \psi_1
     =\begin{cases}
       0, & x\in M\backslash \overline{\Omega }, \\
       1, & x\in \Omega,
     \end{cases}
  \end{gather*}
   \begin{gather*}
    \psi_2
     =\begin{cases}
       0, & x\in M\backslash \tilde{\Omega }, \\
       1, & x\in \overline{\Omega }.
     \end{cases}
  \end{gather*}
  and
  $$|\textmd{d} \psi_i|^2+|\widetilde{\Delta} \psi_i|\leq c, \ \ i=1,2,$$
where $c=32d^{-2}$.
Consider the following test function
$$f(\cdot,t)=\psi_1^2|\mathcal{T}|^2_H+W\psi_2^2\textmd{tr}h,$$
where the constant $W$ will be chosen large enough later. It follows from \eqref{noncompactc1eq1} and \eqref{noncompactc1eq3} that
$$(\widetilde{\Delta}-\frac{\partial}{\partial t})f\geq \psi^2_2(2We^{-\overline{C}_0}-\check{C}_3-18c-8e^{2\overline{C}_0})|\mathcal{T}|^2_H-\widetilde{C}_0,$$
where $\widetilde{C}_0$ is a positive constant depending only on $\overline{C}_0$. If we choose
$$W=\frac{1}{2}e^{-\overline{C}_0}(\check{C}_3+18c+8e^{2\overline{C}_0}+1),$$
then
\begin{equation} \label{noncompactc1eq4}
(\widetilde{\Delta}-\frac{\partial}{\partial t})f\geq \psi^2_2|\mathcal{T}|^2_H-\widetilde{C}_0
\end{equation}
on $M\times [0,+\infty )$. Let $f(q,t_0)=\max_{M\times [0,+\infty)}f$. On the basis of the definition of $\psi_i$ and the uniform $C^0$-bound of $h(t)$, we may assume that:
$$(q,t_0)\in \overline{\Omega}\times (0,+\infty).$$
Of course the inequality \eqref{noncompactc1eq4} yields
\begin{equation*}
|\mathcal{T}(t_0)|^2_{H(t_0)}(q)\leq \widetilde{C}_0,
\end{equation*}
and then (\ref{CC1}).
\end{proof}

In the next part of this section, we will consider the long-time existence of the heat flow (\ref{Flow}) on some non-compact Hermitian manifold $(X, g)$. In the following, we suppose that there exists a non-negative exhaustion function $\phi$ with $\sqrt{-1}\Lambda_{\omega }\partial \bar{\partial} \phi$ bounded, i.e. $(X, g)$ satisfies the Assumption 2.
 Fix a number $\varphi$ and let $X_{\varphi}$ denote the compact space $\{x\in X |\phi(x)\leq \varphi \}$, with boundary $\partial X_{\varphi}$. Let $H_{0}$ be an initial metric on $E$ over $X$.
We consider  the following Dirichlet boundary condition
\begin{equation}\label{D1}
H|_{\partial X_{\varphi}}=H_0|_{\partial X_{\varphi}}.
\end{equation}
By Proposition \ref{compactlongtime}, on every $X_{\varphi}$, the flow \eqref{Flow} with the  above Dirichlet boundary condition and with the initial data $H_{0}$ admits a unique long-time solution $H_{\varphi}(t)$ for $0\leq t<+\infty$.

\begin{prop} \label{noncompactc0}
Suppose $H_{\varphi}(t)$ is a long-time solution of the flow (\ref{Flow}) on $X_{\varphi}$ satisfying the Dirichlet boundary condition (\ref{D1}), then
\begin{equation}\label{c0key}|\log h|_{H_0}(x,t)\leq \frac{1}{\varepsilon}\max_{X_{\varphi}}|\Phi(H_0,\theta)|_{H_0},\ \ \forall
(x,t)\in X_{\varphi}\times[0,+\infty ).\end{equation}
where $h(t)=H_{0}^{-1}H_{\varphi}(t)$, $\overline{C}_0$ is a uniform constant depending only on $\varepsilon^{-1}$ and the initial data $\max_{X_{\varphi}}|\Phi(H_0,\theta)|_{H_0}$.
\end{prop}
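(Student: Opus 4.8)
The plan is to estimate $s:=\log h$ directly by the parabolic maximum principle on $X_{\varphi}\times[0,T]$, the coercive term $\varepsilon\log h$ in the flow being precisely what yields the factor $\varepsilon^{-1}$. Set $h(t)=H_0^{-1}H_{\varphi}(t)=e^{s}$ and $C_0:=\max_{X_{\varphi}}|\Phi(H_0,\theta)|_{H_0}$. The first point is that $s$ vanishes on the parabolic boundary: $s=0$ on $X_{\varphi}\times\{0\}$ because $H_{\varphi}(0)=H_0$, and $s=0$ on $\partial X_{\varphi}\times[0,T]$ by the Dirichlet condition (\ref{D1}). Hence it suffices to derive a differential inequality for $|s|^2_{H_0}$ with a favorable reaction term and to read it off at an interior space-time maximum.

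First I would pair the flow (\ref{Flow}) with $s$. As $s$ is $H_0$-self-adjoint, $|s|^2_{H_0}=\mathrm{tr}(s^2)$; moreover, from $h^{-1}\partial_t h=\int_0^1 e^{-us}(\partial_t s)e^{us}\,du$ together with $[s,e^{us}]=0$ and the cyclicity of the trace one gets $\mathrm{tr}(s\,h^{-1}\partial_t h)=\mathrm{tr}(s\,\partial_t s)=\tfrac12\partial_t|s|^2_{H_0}$. Substituting $h^{-1}\partial_t h=-2\Phi_{\varepsilon}=-2(\Phi(H,\theta)+\varepsilon s)$ from (\ref{Flow}) gives
\begin{equation*}
\tfrac12\,\partial_t|s|^2_{H_0}=-2\,\mathrm{tr}\big(s\,\Phi(H,\theta)\big)-2\varepsilon|s|^2_{H_0}.
\end{equation*}

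Next I would remove the second-order terms. Writing $\Phi(H,\theta)=\Phi(H_0,\theta)+\sqrt{-1}\Lambda_{\omega}\big(\overline{\partial}_{E}(h^{-1}\partial_{H_0}h)+[\theta,\theta^{*_H}-\theta^{*_{H_0}}]\big)$ as in (\ref{def}) and expanding $\widetilde{\Delta}|s|^2_{H_0}$ by the complex Bochner formula, the second-order contributions in $s$ coming from these two expressions are exactly those handled in the local computation (\ref{theta11})--(\ref{theta22}) behind Proposition \ref{idbundle01}: using $\overline{\partial}_E\theta=0$ and the matrix calculus relating $h^{-1}\partial_{H_0}h$ to $\partial_{H_0}s$, they recombine into the nonnegative term $\langle\Psi(s)(\overline{\partial}_{\theta}s),\overline{\partial}_{\theta}s\rangle_{H_0}$, since $\Psi>0$. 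Discarding this term and estimating $-\mathrm{tr}(s\,\Phi(H_0,\theta))\le|s|_{H_0}|\Phi(H_0,\theta)|_{H_0}$ by Cauchy--Schwarz, I obtain
\begin{equation*}
\Big(\partial_t-\widetilde{\Delta}\Big)|s|^2_{H_0}\le-4\varepsilon|s|^2_{H_0}+4\,C_0\,|s|_{H_0}.
\end{equation*}

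Finally I would invoke the maximum principle. Fix $T<+\infty$ and let $|s|^2_{H_0}$ attain its maximum over $X_{\varphi}\times[0,T]$ at a point $(x_0,t_0)$. If this maximum is zero there is nothing to prove; otherwise $s(x_0,t_0)\neq0$ and, by the boundary analysis above, $(x_0,t_0)$ lies in the interior, where $\partial_t|s|^2_{H_0}\ge0$ and $\widetilde{\Delta}|s|^2_{H_0}\le0$. The displayed inequality then forces $0\le-4\varepsilon|s|^2_{H_0}+4C_0|s|_{H_0}$ at $(x_0,t_0)$, i.e. $|s|_{H_0}(x_0,t_0)\le C_0/\varepsilon$; as this is the maximum, $|s|_{H_0}\le C_0/\varepsilon$ throughout $X_{\varphi}\times[0,T]$, and letting $T\to+\infty$ yields (\ref{c0key}). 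The one delicate step is the recombination of the second-order terms into $\langle\Psi(s)\overline{\partial}_{\theta}s,\overline{\partial}_{\theta}s\rangle_{H_0}$ — the pointwise, un-integrated avatar of the identity (\ref{eq33}); once this is in place the maximum-principle argument is routine.
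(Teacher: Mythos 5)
Your proposal is correct and follows essentially the same route as the paper: pair the flow with $s=\log h$ to get $\tfrac12\partial_t|s|^2_{H_0}=\langle h^{-1}\partial_th,s\rangle_{H_0}$, control the second-order and Higgs-commutator terms pointwise by a nonnegative $\Psi$-type quantity plus $-\tfrac14\widetilde{\Delta}|s|^2_{H_0}$ (the paper packages this as the two inequalities $\langle\sqrt{-1}\Lambda_{\omega}[\theta,\theta^{*_H}-\theta^{*_{H_0}}],\log h\rangle_{H_0}\geq 0$ and $\langle\sqrt{-1}\Lambda_{\omega}\bar{\partial}(h^{-1}\partial_{H_0}h),\log h\rangle_{H_0}\geq-\tfrac12\widetilde{\Delta}|\log h|^2_{H_0}$, which is exactly your pointwise, un-integrated avatar of (\ref{eq33})), and conclude with the parabolic maximum principle using the vanishing of $s$ on the parabolic boundary. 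The resulting differential inequality and the final estimate agree with the paper's up to immaterial constants.
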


\begin{proof}

By a direct calculation, we have
\begin{align}
\langle H^{-1}\frac{\partial H}{\partial t},\log h\rangle_{H_0}&=-2\langle \sqrt{-1}\Lambda_{\omega } (F_{H}+[\theta,\theta^{*_H}])-\lambda\cdot \textmd{Id}_E+\varepsilon \log h,\log h\rangle_{H_0}\notag\\
&=-2\langle \Phi(H_0,\theta)+\sqrt{-1}\Lambda_{\omega }(\bar{\partial}(h^{-1}\partial_{H_0}h)+[\theta,\theta^{*_H}-\theta^{*_{H_0}}])+\varepsilon \log h,\log h\rangle_{H_0}\notag\\
&\leq -2 \langle \Phi(H_0,\theta)+\sqrt{-1}\Lambda_{\omega }\bar{\partial}(h^{-1}\partial_{H_0}h)+\varepsilon \log h,\log h\rangle_{H_0},\notag
\end{align}
where we have used the inequality ((2.6) in \cite{NZ})
\begin{equation*}
\langle \sqrt{-1}\Lambda_{\omega}[\theta,\theta^{*_H}-\theta^{*_{H_0}}] , \log h\rangle_{H_0}\geq 0.
\end{equation*}
On the other hand, it is easy to check that
\begin{equation*}
\langle H^{-1}\frac{\partial H}{\partial t},\log h\rangle_{H_0}=\langle h^{-1}\frac{\partial h}{\partial t},\log h\rangle_{H_0}=\frac{1}{2}\frac{\partial}{\partial t}|\log h|^2_{H_0}
\end{equation*}
and
\begin{equation*}
\langle \sqrt{-1}\Lambda_{\omega }\bar{\partial}(h^{-1}\partial_{H_0}h),\log h\rangle_{H_0}\geq -\frac{1}{2}\widetilde{\Delta}(|\log h|^2_{H_0}).
\end{equation*}
Then
\begin{equation*}
\frac{1}{4}(\frac{\partial}{\partial t}-\widetilde{\Delta})(|\log h|^2_{H_0})\leq -\varepsilon |\log h|^2_{H_0}+|\Phi(H_0,\theta)|_{H_0}|\log h|_{H_0},
\end{equation*}
which together with the maximum principle implies (\ref{c0key}).

\end{proof}

\begin{lem} [\protect {\cite[Lemma 6.7]{SIM}}] \label{SIMLEM}
Suppose $u$ is a function on some $X_{\varphi}\times[0,T]$, satisfying
$$(\widetilde{\Delta}-\frac{\partial}{\partial t})u\geq 0,\ \ \ u|_{t=0}=0,$$
and suppose there is a bound $\sup_{X_{\varphi}}u\leq C_1$. Then we have
$$u(x,t)\leq \frac{C_1}{\varphi}(\phi(x)+C_2t),$$
where $C_2$ is the bound of $\widetilde{\Delta}\phi$ in Assumption 2.

\end{lem}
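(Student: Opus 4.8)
The plan is to prove this by a barrier argument combined with the parabolic maximum principle, using the exhaustion function $\phi$ to manufacture the comparison function. The natural candidate is
$$w(x,t)=\frac{C_1}{\varphi}\bigl(\phi(x)+C_2 t\bigr)$$
on the cylinder $X_\varphi\times[0,T]$. Since $\widetilde{\Delta}$ is a second-order elliptic operator with no zeroth-order term and $\phi$ is time-independent, a one-line computation gives
$$\Bigl(\widetilde{\Delta}-\frac{\partial}{\partial t}\Bigr)w=\frac{C_1}{\varphi}\bigl(\widetilde{\Delta}\phi-C_2\bigr)\leq 0,$$
because $\widetilde{\Delta}\phi\leq C_2$ by Assumption 2 and $C_1,\varphi>0$. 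Thus $w$ is a supersolution of the heat operator $\frac{\partial}{\partial t}-\widetilde{\Delta}$, calibrated to dominate $u$ on the parabolic boundary.

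Next I would set $v=u-w$. Subtracting the two differential inequalities, the hypothesis $(\widetilde{\Delta}-\frac{\partial}{\partial t})u\geq 0$ together with the bound just obtained yields
$$\Bigl(\widetilde{\Delta}-\frac{\partial}{\partial t}\Bigr)v=\Bigl(\widetilde{\Delta}-\frac{\partial}{\partial t}\Bigr)u-\Bigl(\widetilde{\Delta}-\frac{\partial}{\partial t}\Bigr)w\geq 0,$$
so $v$ is again a subsolution on $X_\varphi\times[0,T]$. By the weak parabolic maximum principle, $v$ attains its maximum on the parabolic boundary $\bigl(X_\varphi\times\{0\}\bigr)\cup\bigl(\partial X_\varphi\times[0,T]\bigr)$, and it therefore suffices to check that $v\leq 0$ on these two pieces.

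Finally I would verify the boundary data. First note $C_1\geq 0$, since $u|_{t=0}=0$ forces $\sup_{X_\varphi}u\geq 0$. On the initial slice $t=0$ we have $u=0$ and $\phi\geq 0$, hence $v(x,0)=-\frac{C_1}{\varphi}\phi(x)\leq 0$. On the lateral boundary $\partial X_\varphi$ the defining identity $\phi\equiv\varphi$ gives $w(x,t)=C_1+\frac{C_1C_2}{\varphi}t\geq C_1$, while $u\leq\sup_{X_\varphi}u\leq C_1$, so again $v=u-w\leq 0$. Thus $v\leq 0$ on the whole parabolic boundary, and the maximum principle propagates this to $v\leq 0$ throughout $X_\varphi\times[0,T]$, which is exactly the asserted estimate $u(x,t)\leq\frac{C_1}{\varphi}(\phi(x)+C_2 t)$. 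There is no serious obstacle here beyond getting the barrier right: the factor $\frac{C_1}{\varphi}$ is chosen so that $w\geq C_1$ on $\partial X_\varphi$ (to absorb the global bound on $u$) while $w\geq 0$ at $t=0$ (to match $u|_{t=0}=0$), after which the maximum principle does all the work.
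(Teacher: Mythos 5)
Your barrier argument is correct and is essentially the proof given in the cited source (Simpson's Lemma 6.7), which the paper itself quotes without reproving: the comparison function $\frac{C_1}{\varphi}(\phi+C_2t)$ is a supersolution dominating $u$ on the parabolic boundary, and the weak parabolic maximum principle applied to the difference finishes the argument. Your observation that $C_1\geq 0$ (needed for the supersolution inequality and the initial-slice comparison) is a worthwhile detail that is usually left implicit.
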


\medskip

In the following, we assume that there exists a constant $C$ such that $\sup_{X}|\Phi(H_0,\theta)|_{H_0}\leq C$. For any compact subset $\Omega \subset X$, there exists a constant $\varphi_{0}$ such that $\Omega \subset X_{\varphi_{0}}$.
Let $H_{\varphi_{1}}(t)$ and $H_{\varphi_2}(t)$ be the long-time solutions of the flow (\ref{Flow}) satisfying the Dirichlet boundary condition (\ref{D1}) for $\varphi_{0}<\varphi_{1}<\varphi_{2}$. Let $u=\sigma(H_{\varphi_{1}},H_{\varphi_2})$. Proposition \ref{noncompactc0} gives a uniform bound on $u$, and $u$ is a subsolution for the heat operator with $u(0)=0$. By Lemma \ref{SIMLEM}, we have
\begin{equation}\sigma(H_{\varphi},H_{\varphi_1})\leq C_{1}\frac{(\varphi_{0}+C_{2}T)}{\varphi}\end{equation}
on $X_{\varphi_{0}}\times [0, T]$. Then $H_{\varphi}$ is a Cauchy sequence on $X_{\varphi_{0}}\times [0, T]$ for $\varphi\rightarrow \infty$.
Proposition \ref{noncompactc0} and Proposition \ref{noncompactc1} give the uniform $C^0$ and local $C^1$ estimates of $H_{\varphi}(t)$. One can get the local uniform $C^{\infty}$-estimate of $H_{\varphi }(t)$ by the standard Schauder estimate of the parabolic equation. It should be point out that by applying the parabolic Schauder estimate, one can only get the uniform $C^{\infty}$-estimate of $h(t)$ on $X_{\varphi}\times[\tau,T]$, where $\tau>0$ and the uniform estimate depends on $\tau^{-1}$. To fix this, one can use the maximum principle to get  a local uniform bound on the curvature $|F_H|_H$, then apply the elliptic estimates to get local uniform $C^{\infty}$-estimates. We will omit this step here, since it is similar to \protect{\cite[Lemma 2.5]{LZZ}}.
By choosing a subsequence $\varphi \rightarrow \infty $, we have that $H_{\varphi}(t)$  converge in $C_{loc}^{\infty}$-topology to a long-time solution $H(t)$ of the heat flow (\ref{Flow}) on $X$. So, we obtain the following theorem.

\begin{thm} \label{noncompactthm}
Let $(E,\bar{\partial}_E,\theta)$ be a Higgs bundle with fixed Hermitian metric $H_0$
over a Hermitian manifold $(X,g)$ satisfying the Assumptions 2.  Suppose $\sup\limits_{X}|\Phi(H_0,\theta)|_{H_0}<+\infty$, then, on the whole $X$, the flow \eqref{Flow} has a long-time solution $H(t)$ satisfying:
\begin{equation}\label{c0key2}\sup_{(x,t)\in X\times[0,+\infty )}|\log h|_{H_0}(x,t)\leq \frac{1}{\varepsilon}\sup_{X}|\Phi(H_0,\theta)|_{H_0}.\end{equation}
\end{thm}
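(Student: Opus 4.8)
The plan is to build the global solution by exhaustion: solve the Dirichlet problem on each compact sublevel set of the exhaustion function and pass to the limit $\varphi\to\infty$, controlling everything by the uniform estimates already recorded. First I would fix the non-negative exhaustion function $\phi$ supplied by Assumption 2 and, for (regular) values $\varphi$, set $X_\varphi=\{x\in X\mid\phi(x)\le\varphi\}$, a compact manifold with smooth boundary $\partial X_\varphi$. On each $X_\varphi$ I would solve the Dirichlet problem \eqref{Flow2} with initial data $H_0$ and boundary condition \eqref{D1}; by Proposition \ref{compactlongtime} this has a unique long-time solution $H_\varphi(t)$ for $0\le t<+\infty$. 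The whole game is then to show that the family $\{H_\varphi\}$ admits a good limit as $\varphi\to\infty$.

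Next I would assemble the $\varphi$-independent estimates. Proposition \ref{noncompactc0} gives $|\log h_\varphi|_{H_0}\le\varepsilon^{-1}\max_{X_\varphi}|\Phi(H_0,\theta)|_{H_0}\le\varepsilon^{-1}\sup_X|\Phi(H_0,\theta)|_{H_0}$, which is finite by hypothesis and independent of $\varphi$; this is the uniform $C^0$ bound. Feeding it into Proposition \ref{noncompactc1} produces, on every compact $\Omega\subset X$, a uniform local $C^1$ bound on $h_\varphi^{-1}\pt_{H_0}h_\varphi$ whose constant depends only on the $C^0$ bound, on $\mathrm{dist}(\Omega,\partial X_\varphi)$, and on local geometry, hence is again independent of $\varphi$ once $\Omega$ is fixed.

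The crucial step is the local convergence of $\{H_\varphi\}$. Fixing a compact $\Omega\subset X_{\varphi_0}$ and $\varphi_0<\varphi_1<\varphi_2$, I would study $u=\sigma(H_{\varphi_1},H_{\varphi_2})$. By Proposition \ref{P2} it satisfies $(\widetilde{\Delta}-\frac{\pt}{\pt t})u\ge 0$ with $u|_{t=0}=0$, and $\sup_{X_{\varphi_1}}u$ is controlled by the $C^0$ bound, so Simpson's Lemma \ref{SIMLEM} yields $u(x,t)\le\frac{C_1}{\varphi_1}(\phi(x)+C_2 t)$. Restricting to $X_{\varphi_0}\times[0,T]$ gives $\sigma(H_{\varphi_1},H_{\varphi_2})\le C_1(\varphi_0+C_2T)/\varphi_1\to 0$ as $\varphi_1\to\infty$, so $\{H_\varphi\}$ is Cauchy in $C^0_{loc}$. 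I expect this to be the main obstacle: it is precisely here that the non-compactness is tamed, by playing the size $\varphi_1$ of the exhaustion level against the subharmonicity defect of $\phi$, and it uses essentially the bound on $\widetilde{\Delta}\phi$ built into Assumption 2.

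Finally I would upgrade the convergence. With uniform local $C^0$ and $C^1$ control, parabolic Schauder estimates give uniform local $C^\infty$ bounds on $X_\varphi\times[\tau,T]$; the mild $\tau^{-1}$ dependence near $t=0$ is removed by first extracting a local uniform curvature bound on $|F_{H_\varphi}|_{H_\varphi}$ from the maximum principle applied to \eqref{dec}, and then invoking elliptic estimates, exactly as in \cite{LZZ}. A diagonal subsequence $\varphi\to\infty$ then converges in $C^\infty_{loc}$ to a smooth long-time solution $H(t)$ of \eqref{Flow} on all of $X$, and passing to the limit in \eqref{c0key} yields the asserted bound \eqref{c0key2}.
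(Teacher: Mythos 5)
Your proposal is correct and follows essentially the same route as the paper: exhaustion by the sublevel sets $X_\varphi$, the uniform $C^0$ bound from Proposition \ref{noncompactc0} combined with the local $C^1$ bound from Proposition \ref{noncompactc1}, the $C^0_{loc}$ Cauchy property of $\{H_\varphi\}$ via Proposition \ref{P2} and Lemma \ref{SIMLEM}, and the upgrade to $C^\infty_{loc}$ convergence with the $\tau^{-1}$ issue handled by a curvature bound and elliptic estimates as in \cite{LZZ}. No gaps.
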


\section{Poisson equations on the non-compact manifold}

In this section, we are devoted to solve the equation $\widetilde{\Delta} f=\psi$ on a class of non-compact Gauduchon manifold.
Since the difference of the complex Laplacian and the Beltrami-Laplcaian  is given by a linear first order differential operator, the following proposition should be well known, it also can be proved in the same way as that in Theorem \ref{comthm}.

\begin{prop} \label{aprop1}
Let $(M,g)$ be a compact Hermitian manifold with non-empty boundary $\partial M$. Suppose that $\psi \in C^{\infty}(M)$, then for any function $\widetilde{f}$ on the restriction to $\partial M$, there is a unique function $f\in C^{\infty}(M)$ which satisfies the equation $\widetilde{\Delta} f=\psi+\varepsilon f$  and $f=\widetilde{f}$ on $\partial M$ for any $\varepsilon > 0$.
\end{prop}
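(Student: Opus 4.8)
The plan is to treat this as a standard linear elliptic Dirichlet problem and to solve it by the continuity method, using the maximum principle as the crucial input that compensates for the non-self-adjointness of $\widetilde{\Delta}$. Writing $\widetilde{\Delta}=\Delta+\langle V,\nabla\cdot\rangle_g$ as in the decomposition recalled above, the equation $\widetilde{\Delta}f-\varepsilon f=\psi$ is a second-order uniformly elliptic equation whose zeroth-order coefficient $-\varepsilon$ has the favorable (nonpositive) sign; it is precisely this sign, together with $\varepsilon>0$, that drives the whole argument. First I would reduce to homogeneous boundary data: fix any smooth extension $F$ of $\widetilde{f}$ to $M$ and set $f=F+u$, so that $u$ must solve $\widetilde{\Delta}u-\varepsilon u=\psi-(\widetilde{\Delta}F-\varepsilon F)=:\widetilde{\psi}$ with $u|_{\partial M}=0$, where $\widetilde{\psi}\in C^{\infty}(M)$.

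Uniqueness and the a priori estimate both come from the maximum principle. If $w$ satisfies $\widetilde{\Delta}w-\varepsilon w=0$ with $w|_{\partial M}=0$, then at an interior maximum of $w$ the elliptic operator $\widetilde{\Delta}$ is nonpositive while $\varepsilon w=\widetilde{\Delta}w$, forcing $\max_M w\le 0$; applying the same reasoning to $-w$ gives $w\equiv 0$, hence uniqueness. The same comparison yields the uniform bound
\begin{equation*}
\sup_M|f|\le\max\Big(\sup_{\partial M}|\widetilde{f}|,\ \tfrac{1}{\varepsilon}\sup_M|\psi|\Big),
\end{equation*}
which, crucially, does not degenerate as the first-order coefficient $V$ becomes large.

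For existence I would run the continuity method on the family $L_t:=\Delta+t\langle V,\nabla\cdot\rangle_g-\varepsilon$, $t\in[0,1]$, solving $L_t u=\widetilde{\psi}$, $u|_{\partial M}=0$. The set of admissible $t$ is nonempty, since at $t=0$ the operator $\Delta-\varepsilon$ is self-adjoint and coercive, so the problem is solved by Lax--Milgram together with elliptic regularity. It is open, because at any solvable $t_0$ the operator $L_{t_0}$ is an isomorphism from $\{u\in C^{2,\alpha}:u|_{\partial M}=0\}$ onto $C^{0,\alpha}$, by triviality of the kernel (the maximum principle above) combined with the Fredholm property of the elliptic Dirichlet problem. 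It is closed, because the $C^0$ bound $\sup_M|u|\le\varepsilon^{-1}\sup_M|\widetilde{\psi}|$ holds uniformly in $t$ (the operators $L_t$ share the favorable zeroth-order sign and the coefficient $tV$ is uniformly bounded), and this bound feeds the interior and boundary Schauder estimates to give a uniform $C^{2,\alpha}$ bound, so solvability passes to limits. Hence $t=1$ is admissible, producing $u$ and therefore $f$.

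Finally, since $\psi\in C^{\infty}(M)$, Schauder bootstrapping upgrades the $C^{2,\alpha}$ solution to $C^{\infty}(M)$. The only genuine obstacle is that $\widetilde{\Delta}$ is not self-adjoint, so the bilinear form associated with $L_1$ fails to be coercive for small $\varepsilon$ and Lax--Milgram cannot be invoked directly; the maximum principle is exactly what resolves this, supplying both the uniqueness needed for openness and the $\varepsilon$-uniform $C^0$ estimate needed for closedness.
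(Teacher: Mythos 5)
Your proof is correct, but it follows a different route from the one the paper indicates. The paper does not actually write out a proof of this proposition: it remarks that the statement ``should be well known'' because $\widetilde{\Delta}-\Delta$ is a first-order operator, and says it ``can be proved in the same way as that in Theorem 5.1,'' i.e.\ by the parabolic method --- run the flow $\partial f/\partial t=\widetilde{\Delta}f-\varepsilon f-\psi$ with the given Dirichlet data, use the maximum principle to control $|\widetilde{\Delta}f-\varepsilon f-\psi|$ and the $C^{0}$-distance between time slices, and let $t\to\infty$. You instead give the direct elliptic argument: reduction to zero boundary data, uniqueness and the $\varepsilon$-uniform $C^{0}$ bound from the maximum principle (exploiting the nonpositive sign of the zeroth-order coefficient $-\varepsilon$), and existence by the continuity method joining $\Delta-\varepsilon$ (handled by Lax--Milgram plus regularity) to $\widetilde{\Delta}-\varepsilon$, with openness from injectivity plus the Fredholm alternative and closedness from the uniform $C^{0}$-to-$C^{2,\alpha}$ Schauder chain; this is essentially the classical Gilbarg--Trudinger existence theorem for the Dirichlet problem with $c\le 0$, transplanted to a compact manifold with boundary. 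Both arguments hinge on exactly the same maximum principle for the non-self-adjoint operator $\widetilde{\Delta}$; yours has the advantage of being self-contained and purely elliptic, while the paper's choice keeps the proof uniform with its treatment of the nonlinear perturbed equation \eqref{peq01}, where the heat flow is genuinely needed. Your observation that the point of the maximum principle is to replace the coercivity that Lax--Milgram would require is exactly the right diagnosis of why the $\varepsilon>0$ hypothesis and the sign of $-\varepsilon$ matter.
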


Let $(X,g)$ be a non-compact Gauduchon manifold with finite volume and a non-negative exhaustion function $\phi$. By Proposition \ref{aprop1}, we know that the following Dirichlet problem is solvable on $X_{\varphi}$, i.e.
\begin{equation*}
\begin{cases}
\widetilde{\Delta}f_{\varphi}-\varepsilon f_{\varphi}-\psi=0, \ \  \forall x\in X_{\varphi},\\
f_{\varphi}(x)|_{\partial X_{\varphi}}=0.
\end{cases}
\end{equation*}
By simple calculations, we have
\begin{equation*}
\widetilde{\Delta}|f_{\varphi}|^2\geq 2|f_{\varphi}|(\varepsilon|f_{\varphi}|-|\psi|).
\end{equation*}
The maximum principle implies:
\begin{equation*}\label{poisson01}
\max_{X_{\varphi}}|f_{\varphi}|\leq \frac{1}{\varepsilon}\sup_{X_{\varphi}}|\psi|.
\end{equation*}
By (\ref{theta01}), we have
\begin{equation*} \label{theta010}
\begin{split}
\int_{X_{\varphi }}|\mathrm{d} f_{\varphi} |^{2}\frac{\omega^n}{n!}
&=-\int_{X_{\varphi }} f_{\varphi}\tilde{\Delta }f_{\varphi} \frac{\omega^{n}}{n!}\\
& \leq \frac{1}{\varepsilon}\sup_{X_{\varphi}}|\psi|^{2}\mathrm{Vol}(X_{\varphi }, g).\\
\end{split}
\end{equation*}
Then, by using  the standard elliptic estimates, we can prove that, by choosing a subsequence,  $f_{\varphi }$ converge in $C_{loc}^{\infty}$-topology to a solution on whole $X$, i.e. we  prove the following proposition.

\begin{prop} \label{aprop2}
Let $(X,g)$ be a non-compact Gauduchon manifold with finite volume and a non-negative exhaustion function $\phi$. Suppose that $\psi \in C^{\infty}(X)$ satisfies $\sup\limits_X|\psi|<+\infty$.  For any  $\varepsilon>0$, there is a  function $f\in C^{\infty}(X)$ which satisfies the equation
\begin{equation}\label{040301}
\widetilde{\Delta} f=\psi+\varepsilon f
\end{equation}
with
\begin{equation}\label{040302}
\sup_X|f|\leq \frac{1}{\varepsilon }\sup_X|\psi|
\end{equation}
and
\begin{equation} \label{040303}
\int_{X}|\mathrm{d}f|^2\frac{\omega^n}{n!}\leq \frac{1}{\varepsilon}(\sup_X|\psi|)^2\mathrm{Vol}(X,g).
\end{equation}
\end{prop}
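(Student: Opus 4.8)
The plan is to solve \eqref{040301} by exhaustion: solve a Dirichlet problem on each compact truncation $X_{\varphi}=\{\phi\leq\varphi\}$ with vanishing boundary data, establish a priori bounds that are uniform in $\varphi$, and then let $\varphi\to\infty$. First I would invoke Proposition \ref{aprop1} (with the constant perturbation $+\varepsilon f$ and boundary data $\widetilde f\equiv 0$): for each $\varphi$ there is a unique $f_{\varphi}\in C^{\infty}(X_{\varphi})$ solving
\begin{equation*}
\begin{cases}
\widetilde{\Delta}f_{\varphi}=\psi+\varepsilon f_{\varphi}, & x\in X_{\varphi},\\
f_{\varphi}|_{\partial X_{\varphi}}=0.
\end{cases}
\end{equation*}
Everything then reduces to proving two $\varphi$-independent estimates, one pointwise and one for the Dirichlet energy, which must survive passage to the limit.

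For the pointwise bound I would differentiate: using the equation and $|\mathrm{d}f_{\varphi}|^2\geq 0$,
\begin{equation*}
\widetilde{\Delta}|f_{\varphi}|^2=2f_{\varphi}\widetilde{\Delta}f_{\varphi}+2|\mathrm{d}f_{\varphi}|^2\geq 2|f_{\varphi}|(\varepsilon|f_{\varphi}|-|\psi|).
\end{equation*}
At an interior maximum of $|f_{\varphi}|$ the left side is nonpositive, forcing $\varepsilon|f_{\varphi}|\leq|\psi|$ there; since $f_{\varphi}$ vanishes on $\partial X_{\varphi}$, the maximum principle gives $\max_{X_{\varphi}}|f_{\varphi}|\leq\frac{1}{\varepsilon}\sup_X|\psi|$, uniformly in $\varphi$. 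This is the truncated form of \eqref{040302}.

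For the energy bound I would apply the Gauduchon integration-by-parts identity \eqref{theta01} with boundary constant $t=0$ (legitimate because $f_{\varphi}|_{\partial X_{\varphi}}=0$), obtaining $\int_{X_{\varphi}}|\mathrm{d}f_{\varphi}|^2\frac{\omega^n}{n!}=-\int_{X_{\varphi}}f_{\varphi}\widetilde{\Delta}f_{\varphi}\frac{\omega^n}{n!}$. Substituting the equation and discarding the nonnegative term $\varepsilon\int_{X_{\varphi}}|f_{\varphi}|^2\frac{\omega^n}{n!}$ yields
\begin{equation*}
\int_{X_{\varphi}}|\mathrm{d}f_{\varphi}|^2\frac{\omega^n}{n!}\leq\sup_X|\psi|\int_{X_{\varphi}}|f_{\varphi}|\frac{\omega^n}{n!}\leq\frac{1}{\varepsilon}\big(\sup_X|\psi|\big)^2\mathrm{Vol}(X,g),
\end{equation*}
where the last step uses the $C^0$ bound and the finite-volume Assumption 1. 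This is the uniform version of \eqref{040303}.

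Finally I would pass to the limit. Fix any $X_{\varphi_0}$; for $\varphi>\varphi_0$ the equation together with the uniform $C^0$ bound and interior elliptic ($L^p$ and Schauder) estimates yields uniform $C^{k,\alpha}$ bounds on compact subsets of the interior. A diagonal argument then extracts a subsequence converging in $C^{\infty}_{\mathrm{loc}}(X)$ to a smooth limit $f$ solving \eqref{040301} on all of $X$; the $C^0$ estimate passes to the pointwise limit, and the energy estimate passes by lower semicontinuity (Fatou) together with the finite volume, giving \eqref{040302} and \eqref{040303}. The main obstacle is keeping the two estimates strictly independent of $\varphi$: the energy bound is precisely where the Gauduchon condition $\partial\bar{\partial}\omega^{n-1}=0$ is indispensable, since it makes the correction terms in \eqref{theta01} vanish so that the non-K\"ahler integration by parts reduces to the clean identity above, while Assumption 1 is what converts the $C^0$ bound into a global $L^2$-gradient bound.
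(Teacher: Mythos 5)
Your proof is correct and follows essentially the same route as the paper: the paper likewise solves the zero-boundary Dirichlet problem on each $X_{\varphi}$ via Proposition \ref{aprop1}, derives the uniform $C^0$ bound from $\widetilde{\Delta}|f_{\varphi}|^2\geq 2|f_{\varphi}|(\varepsilon|f_{\varphi}|-|\psi|)$ and the maximum principle, obtains the energy bound from the identity (\ref{theta01}) with $t=0$, and passes to a $C^{\infty}_{loc}$ limit by standard elliptic estimates. No discrepancies to report.
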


\medskip

Now we are ready to solve the Poisson equation on the non-compact Gauduchon manifold.

\medskip

\begin{prop} \label{poisson}
Let $(X, g)$ be a non-compact Gauduchon manifold satisfying Assumptions 1,2,3 and $|\mathrm{d}\omega^{n-1}|_{g}\in L^2(X)$. Suppose that $\psi \in C^{\infty}(X)$ satisfies $\int_X \psi=0$ and $\sup\limits_X|\psi|<+\infty$. Then there is a function $f\in C^{\infty}(X)$ which satisfies the Possion equation \begin{equation}\label{poisson1}\widetilde{\Delta} f=\psi , \end{equation}
\begin{equation}\label{int01}
\int_{X}|\mathrm{d}f|^{2}\frac{\omega^{n}}{n!}<+\infty
\end{equation}
and $\sup_X|f|<+\infty$.
\end{prop}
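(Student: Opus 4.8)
The plan is to obtain $f$ as the limit, as $\varepsilon\to 0$, of the solutions produced by Proposition \ref{aprop2}. For each $\varepsilon\in(0,1]$, let $f_{\varepsilon}\in C^{\infty}(X)$ solve $\widetilde{\Delta}f_{\varepsilon}=\psi+\varepsilon f_{\varepsilon}$ with $\sup_X|f_{\varepsilon}|\leq\varepsilon^{-1}\sup_X|\psi|$ and $\int_X|\mathrm{d}f_{\varepsilon}|^2\frac{\omega^n}{n!}<+\infty$. The estimates coming directly from Proposition \ref{aprop2} degenerate like $\varepsilon^{-1}$, so the whole point is to replace them by bounds that are uniform in $\varepsilon$. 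The first step is to show that $f_{\varepsilon}$ has vanishing average: integrating the equation and using $\sqrt{-1}\Lambda_{\omega}\pt\bp f_{\varepsilon}=\tfrac12\widetilde{\Delta}f_{\varepsilon}$, I would write $\widetilde{\Delta}f_{\varepsilon}\frac{\omega^n}{n!}=\mathrm{d}\eta_{\varepsilon}$ modulo terms involving $\mathrm{d}\omega^{n-1}$, with $\eta_{\varepsilon}=-2\sqrt{-1}\,\pt f_{\varepsilon}\wedge\frac{\omega^{n-1}}{(n-1)!}$. Since $\mathrm{d}f_{\varepsilon}\in L^2$ and $|\mathrm{d}\omega^{n-1}|_g\in L^2$, the form $\eta_{\varepsilon}$ lies in $L^2$ and $\mathrm{d}\eta_{\varepsilon}$ is integrable (Cauchy--Schwarz), so Lemma \ref{SIMLEMMA} applies, its hypothesis $\int_X|\widetilde{\Delta}\phi|\frac{\omega^n}{n!}<+\infty$ holding by Assumptions 1 and 2, and gives $\int_X\widetilde{\Delta}f_{\varepsilon}\frac{\omega^n}{n!}=0$. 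Together with $\int_X\psi\frac{\omega^n}{n!}=0$ this yields $\int_X f_{\varepsilon}\frac{\omega^n}{n!}=0$, which is what prevents $f_{\varepsilon}$ from drifting by an additive constant as $\varepsilon\to 0$.

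Next I would establish the uniform bounds. From $\sup_X|\varepsilon f_{\varepsilon}|\leq\sup_X|\psi|$ the right-hand side $\psi+\varepsilon f_{\varepsilon}$ of the equation is bounded independently of $\varepsilon$, so $|\widetilde{\Delta}f_{\varepsilon}|\leq 2\sup_X|\psi|$ uniformly and, in particular, $\sqrt{-1}\Lambda_{\omega}\pt\bp(f_{\varepsilon}-\inf_X f_{\varepsilon})\geq-\sup_X|\psi|$. Using the integration-by-parts identity (\ref{theta01}) on the exhausting domains $X_{\vph}$ (again passing to $X$ via Lemma \ref{SIMLEMMA}) I get $\int_X|\mathrm{d}f_{\varepsilon}|^2\frac{\omega^n}{n!}=-\int_X f_{\varepsilon}\widetilde{\Delta}f_{\varepsilon}\frac{\omega^n}{n!}=-\int_X f_{\varepsilon}\psi\frac{\omega^n}{n!}-\varepsilon\int_X f_{\varepsilon}^2\frac{\omega^n}{n!}$, where the average-free property lets me replace $\psi$ by $\psi$ tested against $f_{\varepsilon}-\inf_X f_{\varepsilon}$. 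The crux, and the step I expect to be the main obstacle, is to upgrade this integral control into a uniform sup bound $\sup_X|f_{\varepsilon}|\leq C$ with $C$ independent of $\varepsilon$. This is exactly where Assumption 3 is indispensable: applied to the nonnegative bounded function $f_{\varepsilon}-\inf_X f_{\varepsilon}$ with $B=\sup_X|\psi|$, it converts the $L^1$-bound (controlled through the average-free property and the energy identity) into the desired $L^{\infty}$-bound, playing the role of the Sobolev/Poincaré inequality that is unavailable on a general non-compact finite-volume manifold.

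Finally, granted the uniform bounds $\sup_X|f_{\varepsilon}|\leq C$ and $|\widetilde{\Delta}f_{\varepsilon}|\leq 2\sup_X|\psi|$, interior elliptic (Schauder) estimates give uniform $C^{k}_{loc}$-bounds on $f_{\varepsilon}$, so along a subsequence $\varepsilon_j\to 0$ one has $f_{\varepsilon_j}\to f$ in $C^{\infty}_{loc}(X)$. Because $f$ is bounded and $\varepsilon_j\to 0$, the term $\varepsilon_j f_{\varepsilon_j}\to 0$ locally uniformly, so the limit satisfies $\widetilde{\Delta}f=\psi$. The uniform sup bound passes to the limit to give $\sup_X|f|\leq C<+\infty$, and the uniform energy bound together with lower semicontinuity of the Dirichlet energy gives $\int_X|\mathrm{d}f|^2\frac{\omega^n}{n!}<+\infty$, completing the proof.
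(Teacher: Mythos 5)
Your overall strategy (perturbed equation from Proposition \ref{aprop2}, zero-average via Lemma \ref{SIMLEMMA}, energy identity, Assumption 3, then elliptic estimates and a diagonal limit) matches the paper's, and the first and last steps are carried out correctly. But there is a genuine gap at precisely the point you flag as the crux: you assert that the uniform $L^1$-bound on $f_{\varepsilon}$ is ``controlled through the average-free property and the energy identity,'' and then feed it into Assumption 3. That control is not established, and it does not follow from what you have. The energy identity gives $\int_X|\mathrm{d}f_{\varepsilon}|^2\leq \sup_X|\psi|\int_X|f_{\varepsilon}|$, i.e.\ it bounds the energy \emph{by} the $L^1$-norm, not the other way around; and zero average plus finite energy does not imply an $L^1$-bound in the absence of a Poincar\'e inequality, which you yourself note is unavailable here. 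Your proposed application of Assumption 3 to $f_{\varepsilon}-\inf_X f_{\varepsilon}$ also does not close: with $m=-\inf_X f_{\varepsilon}$ and $M=\sup_X f_{\varepsilon}$, the zero average gives $\int_X(f_{\varepsilon}-\inf_X f_{\varepsilon})=m\,\mathrm{Vol}(X)$, so Assumption 3 yields only $M+m\leq C\,a(m\,\mathrm{Vol}(X))$, a relation between $M$ and $m$ rather than a bound on either.

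The paper fills exactly this hole with a normalization--compactness--contradiction argument. Assuming $\|f_{\varepsilon}\|_{C^0}$ (equivalently, by Assumption 3 and Cauchy--Schwarz on the finite-volume space, $\|f_{\varepsilon}\|_{L^2}$) is unbounded, one sets $u_{\varepsilon}=f_{\varepsilon}/\|f_{\varepsilon}\|_{L^2}$; the energy identity then gives $\int_X|\mathrm{d}u_{\varepsilon}|^2=-\varepsilon-\|f_{\varepsilon}\|_{L^2}^{-1}\int_X u_{\varepsilon}\psi\to 0$, so a weak $L^2_1$-limit $u_{\infty}$ is constant a.e.; Assumption 3 supplies the uniform sup bound $\sup_X|u_{\varepsilon}|\leq \widetilde{C}_3$, which together with finite volume prevents the $L^2$-mass from escaping to infinity and forces $\|u_{\infty}\|_{L^2}=1$, hence $u_{\infty}$ is a nonzero constant; but $\int_X u_{\varepsilon}=0$ passes to the limit and gives $\int_X u_{\infty}=0$, a contradiction. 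This compactness argument is in effect the substitute for the missing Poincar\'e inequality, and it is the step your proposal needs but does not contain.
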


\begin{proof}
By a direct calculation, we have
\begin{equation*}
\widetilde{\Delta}\log(e^f+e^{-f})\geq -|\widetilde{\Delta}f|.
\end{equation*}
On the other hand, it is easy to check that
\begin{equation*}
|f|\leq \log(e^f+e^{-f})\leq |f|+\log 2.
\end{equation*}
From Proposition \ref{aprop2}, for any  $\varepsilon>0$, we have a solution $f_{\varepsilon}$ of the equation (\ref{040301})  and $f_{\varepsilon}$ satisfies (\ref{040302}).
By Assumption 3, we have
\begin{equation*}
\sup_X|f_{\varepsilon}|\leq \sup_X\log(e^{f_{\varepsilon}}+e^{-f_{_{\varepsilon}}})\leq \widetilde{C}_1\int_X |f_{\varepsilon}|+\widetilde{C}_2,
\end{equation*}
where constants $\widetilde{C}_1$ and $\widetilde{C}_2$ depend only on $\sup_X|\psi|$ and $\textmd{Vol}(X)$.

In the following, we will use a contradiction argument to prove that
$\| f_{\varepsilon}\|_{C^0}$ is uniform bounded.
If $\| f_{\varepsilon}\|_{C^0}$ is unbounded, then there exists a subsequence $\varepsilon\rightarrow 0$, such that $\| f_{\varepsilon}\|_{L^2}\rightarrow +\infty$. Set $u_{\varepsilon}=f_{\varepsilon}/\parallel f_{\varepsilon}\parallel_{L^2}$. It follows that
$$\| u_{\varepsilon}\|_{L^2}=1 \ \ \textmd{and}\ \ \sup_X|u_{\varepsilon}|<\widetilde{C}_3 < +\infty ,$$
where $\widetilde{C}_3$ is a uniform constant depending only on $\sup_X|\psi|$ and $\textmd{Vol}(X)$.
Using the conditions $\partial \bar{\partial}\omega^{n-1}=0$, $|\mathrm{d}\omega^{n-1}|_{g}\in L^2(X)$, \eqref{040302}, \eqref{040303}, and Lemma \ref{SIMLEMMA}, one can check that
\begin{equation}\label{aeq5}
\int_X f_{\varepsilon}\widetilde{\Delta} f_{\varepsilon} \frac{\omega^n}{n!}=-\int_X|\mathrm{d} f_{\varepsilon}|^2\frac{\omega^n}{n!}.
\end{equation}
Substituting the perturbed equation into \eqref{aeq5}, we have
\begin{equation*}\label{int02}
\int_X|\mathrm{d} u_{\varepsilon}|^2 \frac{\omega^{n}}{n!}=-\varepsilon-\frac{1}{\| f_{\varepsilon}\|_{L^2}}\int_X u_{\varepsilon}\psi \frac{\omega^{n}}{n!}.
\end{equation*}
Then, by passing to a subsequence, we have that $u_{\varepsilon }$ converges weakly to $u_{\infty}$ in $L^2_1$ as $\varepsilon \rightarrow 0$, and $u_{\infty}$ is constant almost everywhere. Note that for any relatively compact $Z\subset X$, $L^2_1\rightarrow L^2(Z)$ is compact. So
$$\int_Z |u_{\varepsilon}|^2\rightarrow \int_Z|u_{\infty}|^2.$$
Recalling $\sup_X|u_{\varepsilon_{i}}|<\widetilde{C}_3< +\infty$ and $X$ has finite volume, so for a small $\epsilon>0$, we have
\begin{equation*}
\int_{X\backslash Z}|u_{\varepsilon}|^2<\epsilon,
\end{equation*}
when $Z$ is big enough. Thus $1\geq \int_Z|u_{\infty}|^2\geq 1-\epsilon$. So, we have
$$u_{\infty}=\textmd{const}.\neq 0 \ \ a.e..$$
Using the conditions $\partial \bar{\partial}\omega^{n-1}=0$, $|\mathrm{d}\omega^{n-1}|_{g}\in L^2(X)$, \eqref{040302}, \eqref{040303} and Lemma \ref{SIMLEMMA}, it is easy to check that
\begin{equation*}
\int_X \widetilde{\Delta} f_{\varepsilon} \frac{\omega^{n}}{n!}=0.
\end{equation*}
Then combining $\widetilde{\Delta} f_{\varepsilon}+\varepsilon f_{\varepsilon}+\psi=0$ and $\int_X\psi=0$, we have
\begin{equation*}
\int_X f_{\varepsilon} \frac{\omega^{n}}{n!}=0,
\end{equation*}
and
\begin{equation*}
\int_X u_{\varepsilon}\frac{\omega^{n}}{n!}=0.
\end{equation*}
Then, we can obtain
\begin{equation*}
\int_X u_{\infty}\frac{\omega^{n}}{n!}=0.
\end{equation*}
We get a contradiction, so we have proved that $\| f_{\varphi }\|_{C^0}$ is bounded uniformly when $\varepsilon$ goes to zero. By standard elliptic estimates, we obtain, by choosing a subsequence $f_{\varepsilon}$ must converge to a smooth function $f_{\infty}$ in $C_{loc}^{\infty}$-topology as $\varepsilon \rightarrow 0$, and $f_{\infty}$ satisfies the equation (\ref{poisson1}). (\ref{aeq5}) implies (\ref{int01}). This completes the proof of Proposition \ref{poisson}.
\end{proof}

\section{ Solvability of the perturbed equation}

We first solve the Dirichlet problem for the perturbed equation, i.e. we obtain the following theorem.

\begin{thm} \label{comthm}
Let $(E,\bar{\partial}_E,\theta)$ be a Higgs bundle with fixed Hermitian metric $H_0$
over the compact Gauduchon manifold $\overline{M}$ with non-empty
boundary $\partial M$. There is a unique
Hermitian metric $H$ on $E$ such that
\begin{equation}\label{peq01}
\sqrt{-1}\Lambda_{\omega } (F_{H}+[\theta,\theta^{*_H}])-\lambda \cdot \mathrm{Id}_E+\varepsilon \log (H_0^{-1}H)=0, \ \ \ \ H|_{\partial M}=H_{0},
\end{equation}
for any $\varepsilon \geq 0$.
When $\varepsilon >0$,  we have
\begin{equation}\label{DC0}
\max_{x\in \overline{M}}|s|_{H_0}(x)\leq \frac{1}{\varepsilon}\max_{\overline{M}}|\Phi(H_0,\theta)|_{H_0}.
\end{equation}
and
\begin{equation} \label{eq0403}
\|D_{H_{0}, \theta }^{1, 0}s\|_{L^2(\overline{M})}=\|\overline{\partial}_{\theta}s\|_{L^2(\overline{M})}\leq C(\varepsilon^{-1},\Phi(H_0,\theta),\mathrm{Vol}(M)),
\end{equation}
where $s=\log (H_{0}^{-1}H)$. Furthermore, if the initial metric $H_{0}$ satisfies the following condition
\begin{equation}\label{trace3}
\mathrm{tr}(\sqrt{-1}\Lambda_{\omega } (F_{H}+[\theta,\theta^{*_H}])-\lambda \cdot \mathrm{Id}_E)=0,
\end{equation}
then $\mathrm{tr}(s)=0$ and $H$ also satisfies the condition (\ref{trace3}).
\end{thm}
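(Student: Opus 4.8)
The plan is to produce $H$ as the $t\to\infty$ limit of the long-time solution $H(t)$ of the Dirichlet heat flow \eqref{Flow2} with initial and boundary data $H_0$, which exists by Proposition \ref{compactlongtime}, and to read off uniqueness from the maximum principle. Write $\Phi_{\varepsilon}$ for the left-hand side of \eqref{peq01}, so that $\Phi_{\varepsilon}=\Phi(H,\theta)+\varepsilon s$ with $s(t)=\log(H_0^{-1}H(t))$; the time-independent Dirichlet condition gives $s|_{\partial M}=0$ for all $t$ and $s|_{t=0}=0$. The first step is the $C^0$-estimate \eqref{DC0}: exactly as in the proof of Proposition \ref{noncompactc0} one gets
\begin{equation*}
\tfrac{1}{4}(\tfrac{\partial}{\partial t}-\widetilde{\Delta})|s|_{H_0}^2\leq -\varepsilon|s|_{H_0}^2+|\Phi(H_0,\theta)|_{H_0}\,|s|_{H_0},
\end{equation*}
and since $s$ vanishes on the parabolic boundary, the maximum principle yields $\sup_{\overline{M}}|s|_{H_0}\leq \varepsilon^{-1}\max_{\overline{M}}|\Phi(H_0,\theta)|_{H_0}$. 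Together with \eqref{dec} this bounds $\sup|\Lambda_{\omega}F_{H(t)}|$, so Lemma \ref{C1ofh} and parabolic Schauder estimates furnish uniform $C^{\infty}$-bounds for $H(t)$ on the compact $\overline{M}$.

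The crux, which I expect to be the main obstacle, is the convergence of $H(t)$: on a merely Gauduchon base the Donaldson functional is unavailable, so I would argue by pointwise decay rather than variationally. The key observation is that, since the Dirichlet data are constant in $t$, $\tfrac{\partial}{\partial t}H=-2H\Phi_{\varepsilon}$ vanishes on $\partial M$, forcing $\Phi_{\varepsilon}|_{\partial M}=0$ for every $t>0$. Revisiting the $\varepsilon$-term $2\varepsilon\lag \tfrac{\partial}{\partial t}(\log h),\Phi_{\varepsilon}\rag_H$ in the proof of Proposition \ref{P1} and using the elementary bound $\tfrac{x}{e^{x}-1}\geq \tfrac{M}{e^{M}-1}$ for $|x|\leq M$ (with $M$ the oscillation of $s$, controlled by \eqref{DC0}) improves \eqref{dec} to
\begin{equation*}
(\tfrac{\partial}{\partial t}-\widetilde{\Delta})|\Phi_{\varepsilon}|_H^2\leq -c\,\varepsilon\,|\Phi_{\varepsilon}|_H^2,
\end{equation*}
where $c>0$ depends only on the $C^0$-bound. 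Applying the maximum principle to $e^{c\varepsilon t}|\Phi_{\varepsilon}|_H^2$ on $\overline{M}\times[t_0,\infty)$, where the lateral boundary term vanishes, gives $\sup_{\overline{M}}|\Phi_{\varepsilon}(t)|_H\to0$ exponentially. Hence $|\tfrac{\partial}{\partial t}s|$ is exponentially small and integrable in $t$, so $H(t)$ is Cauchy in $C^0$ and, with the uniform higher-order bounds, converges in $C^{\infty}(\overline{M})$ to a limit $H$ solving $\Phi_{\varepsilon}(H)=0$, i.e. \eqref{peq01}. Uniqueness is then immediate from Corollary \ref{uniq}: two solutions give $\widetilde{\Delta}\sigma(H,K)\geq 0$ with $\sigma(H,K)=0$ on $\partial M$, whence $\sigma(H,K)\equiv 0$ and $H=K$.

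It remains to establish the auxiliary assertions for $\varepsilon>0$. For \eqref{eq0403}, apply the integral identity of Proposition \ref{idbundle01}(1) (valid since $H|_{\partial M}=H_0|_{\partial M}$) to $H_0$ and $H$. As $\Phi_{\varepsilon}(H)=0$ gives $\Phi(H,\theta)=-\varepsilon s$, the right-hand side of \eqref{eq04021} equals $-\varepsilon\int_M|s|^2\leq0$, so
\begin{equation*}
\int_M\lag \Psi(s)(\ol{\partial}_{\theta}s),\ol{\partial}_{\theta}s\rag_{H_0}\tfrac{\omega^n}{n!}\leq -\int_M\tr(\Phi(H_0,\theta)s)\tfrac{\omega^n}{n!}\leq \|\Phi(H_0,\theta)\|_{L^2}\|s\|_{L^2}.
\end{equation*}
Since $\Psi(s)$ is bounded below by a positive constant depending only on $\sup_{\overline{M}}|s|_{H_0}$ (hence on $\varepsilon^{-1}$ and $\Phi(H_0,\theta)$ via \eqref{DC0}), and $\|s\|_{L^2}$ is controlled by the $C^0$-bound and $\mathrm{Vol}(M)$, this bounds $\|\ol{\partial}_{\theta}s\|_{L^2}=\|D^{1,0}_{H_0,\theta}s\|_{L^2}$ (the two norms agreeing by self-adjointness of $s$), giving \eqref{eq0403}. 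Finally, if $H_0$ satisfies \eqref{trace3}, taking traces in $\Phi_{\varepsilon}(H)=0$ and using $\tr[\theta,\theta^{*_H}]=0$ together with $\tr F_H-\tr F_{H_0}=\ol{\partial}\partial\,\tr(s)$ yields $\widetilde{\Delta}\tr(s)=2\varepsilon\,\tr(s)$ with $\tr(s)|_{\partial M}=0$; the maximum principle forces $\tr(s)\equiv0$, whence $\tr\Phi(H,\theta)=0$, so $H$ also satisfies \eqref{trace3}. The case $\varepsilon=0$ follows from the long-time flow of \cite{Z} and the same scheme.
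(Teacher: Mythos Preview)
Your proposal is correct for $\varepsilon>0$, but your convergence step takes a genuinely different route from the paper. The paper does \emph{not} use an exponential decay estimate for $|\Phi_{\varepsilon}|_H^2$; instead it follows Donaldson's trick: solve the auxiliary Dirichlet problem $\widetilde{\Delta}v=-|\Phi(H_0,\theta)|_{H_0}$, $v|_{\partial M}=0$, observe that $w(x,t)=\int_0^t|\Phi_{\varepsilon}|_H(x,\rho)\,\mathrm{d}\rho - v(x)$ satisfies $(\widetilde{\Delta}-\partial_t)w\geq 0$ with nonpositive parabolic boundary data, and conclude that $\int_0^t|\Phi_{\varepsilon}|_H\,\mathrm{d}\rho\leq \sup_M v$ uniformly in $t$. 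This integrated bound feeds directly into $\sigma(H(t),H(t_1))$ and gives $C^0$-Cauchy without ever invoking the $\varepsilon$-term, so it covers all $\varepsilon\geq 0$ at once. Your argument, by contrast, sharpens \eqref{dec} to $(\partial_t-\widetilde{\Delta})|\Phi_{\varepsilon}|_H^2\leq -c\varepsilon|\Phi_{\varepsilon}|_H^2$ via the pointwise bound $\tfrac{x}{e^x-1}\geq c>0$ on the spectrum of $s$ (which is legitimate given the uniform $C^0$-bound on $s$), and hence needs $\varepsilon>0$; you then defer $\varepsilon=0$ to \cite{Z}. What you gain is an explicit exponential convergence rate and no need for the auxiliary elliptic problem; what the paper gains is a single uniform argument independent of $\varepsilon$. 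The remaining items---the $C^0$-estimate \eqref{DC0} via Proposition~\ref{noncompactc0}, the $L^2$-bound \eqref{eq0403} via Proposition~\ref{idbundle01}(1), uniqueness via Corollary~\ref{uniq}, and the trace assertion---match the paper's arguments (the paper establishes $\mathrm{tr}(s(t))=0$ along the flow using \eqref{trace}, whereas you do it on the limit equation; both are fine).
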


\begin{proof}
Proposition \ref{compactlongtime} guaranteed the existence of long-time solution $H(t)$ of the heat equation (\ref{Flow2}).
By Proposition \ref{P1}, we have
\begin{equation} \label{thmeq1}
(\widetilde{\triangle }-\frac{\partial}{\partial
t})|\sqrt{-1}\Lambda_{\omega } (F_{H(t)}+[\theta,\theta^{*_{H(t)}}])-\lambda \cdot \textmd{Id}_E+\varepsilon \log (H_0^{-1}H(t))|_{H(t)}\geq 0.
\end{equation}
If the initial metric $H_{0}$ satisfies  the condition (\ref{trace3}), by (\ref{trace}) and the maximum principle, we know that $H(t)$ must satisfy
\begin{equation*}
\tr \{\sqrt{-1}\Lambda_{\omega }(F_{H(t)}+[\theta,\theta^{\ast_{H(t)}}])-\lambda \cdot \textmd{Id}_E+\varepsilon \log (H_{0}^{-1}H(t))\}=0.
\end{equation*}
Then, we have
\begin{equation*}
\tr (\log H_{0}^{-1} H(t))=0
\end{equation*}
and $H(t)$ satisfies the condition (\ref{trace3}) for all $t\geq 0$.

By \protect {\cite[Chapter 5, Proposition 1.8]{T}}, one can solve the following Dirichlet problem on $M$:
\begin{equation} \label{thmeq2}
\widetilde{\triangle }v =-
|\sqrt{-1}\Lambda_{\omega } (F_{H_0}+[\theta,\theta^{*_{H_0}}])-\lambda \cdot \textmd{Id}_E|_{H_{0}}, \ \
v|_{\partial M}=0.
\end{equation}
Set $w(x, t)=\int_{0}^{t}|\sqrt{-1}\Lambda_{\omega } (F_{H}+[\theta,\theta^{*_H}])-\lambda \cdot \textmd{Id}_E+\varepsilon \log (H_0^{-1}H)|_{H}(x, \rho) \mathrm{d}\rho -v(x)$. From \eqref{thmeq1}, \eqref{thmeq2}, and the boundary
condition satisfied by $H$ implies that, for $t>0$,
$|\sqrt{-1}\Lambda_{\omega } (F_{H}+[\theta,\theta^{*_H}])-\lambda \cdot \textmd{Id}_E+\varepsilon \log (H_0^{-1}H)|_{H}(x, t)$ vanishes on the
boundary of $M$, it is easy to check that $w(x, t)$ satisfies
$$
(\widetilde{\triangle } -\frac{\partial
}{\partial t })w(x, t)\geq 0, \ \ w(x, 0)=-v(x), \ \ w(x, t)|_{\partial M }=0.
$$
By the maximum principle, we have
\begin{equation} \label{thmeq3}
\int_{0}^{t}|\sqrt{-1}\Lambda_{\omega } (F_{H}+[\theta,\theta^{*_H}])-\lambda \cdot \textmd{Id}_E+\varepsilon \log (H_0^{-1}H)|_{H}(x, \rho) \mathrm{d}\rho \leq
\sup_{y\in M} v(y),
\end{equation}
for any $x\in M$, and $0<t <+\infty $.

Let $t_{1}\leq t \leq t_{2}$, and let $\bar{h}(x, t)= H^{-1}(x,
t_{1})H(x, t)$. It is easy to check that
\begin{equation*}
\frac{\partial }{\partial t} \log \textmd{tr}(\bar{h})\leq
2|\sqrt{-1}\Lambda_{\omega } (F_{H}+[\theta,\theta^{*_H}])-\lambda \cdot \textmd{Id}_E+\varepsilon \log (H_0^{-1}H)|_{H}.
\end{equation*}
By integration, we have
\begin{equation*}
\begin{split}
&\textmd{tr} (H^{-1}(x, t_{1})H(x, t))\\
&~~\leq r\exp{(2\int_{t_{1}}^{t}|\sqrt{-1}\Lambda_{\omega } (F_{H}+[\theta,\theta^{*_H}])-\lambda \cdot \textmd{Id}_E+\varepsilon \log (H_0^{-1}H)|_{H}
\mathrm{d}\rho)}.
\end{split}
\end{equation*}
We have a similar estimate for $\textmd{tr} (H^{-1}(x, t)H(x, t_{1}))$.
Combining them we have
\begin{equation} \label{thmeq4}
\begin{split}
&\sigma (H(x , t), H(x , t_{1}) )\\
&~~\leq 2r
(\exp{(2\int_{t_{1}}^{t}|\sqrt{-1}\Lambda_{\omega }(F_{H}+[\theta,\theta^{*_H}])-\lambda \cdot \textmd{Id}_E+\varepsilon \log (H_0^{-1}H)|_{H}
\mathrm{d}\rho)} -1 ).
\end{split}
\end{equation}
 By \eqref{thmeq3} and \eqref{thmeq4},  we have that $H(t)$ converge in the $C^{0}$ topology
to some continuous metric $H_{\infty}$ as $t\longrightarrow
+\infty$. From  Lemma \ref{C1ofh}, we know that $H(t)$ are bounded uniformly
in $C^{1}_{loc}$ and also bounded uniformly in $L_{2, loc}^{p}$ (for any $1<p<+\infty $)
. On the other hand, we have known that $|H^{-1}\frac{\partial H
}{\partial t}|$ is bounded uniformly. Then, the standard elliptic
regularity implies that there exists a subsequence
$H(t)\longrightarrow H_{\infty}$ in $C^{\infty}_{loc}$-topology. From
formula \eqref{thmeq3}, we know that $H_{\infty}$ is the desired
Hermitian metric satisfying the boundary condition. By
Corollary \ref{uniq} and the maximum principle, it is easy to conclude
the uniqueness of solution.

If $\varepsilon >0$, (\ref{c0key}) in Proposition \ref{noncompactc0} implies (\ref{DC0}). By the definition, it is easy to check
\begin{equation*}
|\overline{\partial}_{\theta}s|_{H_{0}}^{2}\leq \tilde{C}\langle \Psi(s)(\overline{\partial}_{\theta}s),\overline{\partial}_{\theta}s\rangle_{H_0},
\end{equation*}
where $\tilde{C}$ is a positive constant depending only on the $L^{\infty}$-bound of $s$.
By the identity (\ref{eq04021}) in Proposition \ref{idbundle01} and the equation (\ref{peq01}), we have
\begin{equation}\label{eq04021000}
\begin{split}
\int_{M}|\overline{\partial}_{\theta}s|_{H_{0}}^{2}\frac{\omega^n}{n!}&\leq \tilde{C}
\int_{M}\langle \Psi(s)(\overline{\partial}_{\theta}s),\overline{\partial}_{\theta}s\rangle_{H_0}\frac{\omega^n}{n!}\\
&=\tilde{C}\int_M (-\mathrm{tr}(\Phi(H_0,\theta)s)-\varepsilon |s|_{H_{0}}^{2})\frac{\omega^n}{n!}\\
& \leq \tilde{C}\frac{1}{\varepsilon }\cdot \sup_{M}|\Phi(H_0,\theta)|_{H_0}^{2}\cdot \mathrm{Vol}(M, g).\\
\end{split}
\end{equation}
Then (\ref{eq04021000}) implies (\ref{eq0403}).
\end{proof}

Let $X$ be a non-compact Gauduchon manifold, $\{ X_{\varphi} \}$ an exhausting sequence of
compact sub-domains of $X$. Suppose $(E,\bar{\partial}_E,\theta)$ is a Higgs bundle over $X$ and $H_{0}$ is a Hermitian metric on $E$. By Theorem \ref{comthm}, we know that the following Dirichlet problem is solvable on $X_{\varphi}$, i.e. there exists a Hermitian metric $H_{\varphi}(x)$ such that
\begin{equation*}\begin{cases}
\sqrt{-1}\Lambda_{\omega } (F_{H_{\varphi}}+[\theta,\theta^{*_{H_{\varphi}}}])-\lambda \cdot \textmd{Id}_E+\varepsilon \log (H_0^{-1}H_{\varphi})=0, \forall x\in X_{\varphi},\\
H_{\varphi}(x)|_{\partial X_{\varphi}}=H_0(x).
\end{cases}
\end{equation*}
In order to prove that we can pass to limit and eventually obtain a solution on the whole manifold $X$, we need some a priori estimates. The key is the $C^0$-estimate.

We denote $h_{\varphi}=H_0^{-1}H_{\varphi}$. Theorem \ref{comthm} implies:
\begin{equation*}
\sup_{x\in X_{\varphi}}|\log h_{\varphi}|_{H_0}(x)\leq \frac{1}{\varepsilon}\max_{X_{\varphi}}|\Phi(H_0,\theta)|_{H_0},
\end{equation*}
For any compact subset $\Omega\subset X$, we can choose a $\varphi_{0}$ such that $\Omega\subset X_{\varphi_{0}}$.
By Proposition \ref{noncompactc1}, we have the following local uniform $C^{1}$-estimates, i.e. for any $\varphi >\varphi_{0}$, there exists
\begin{equation*}\label{CC10}
\sup_{x\in \Omega }|h_{\varphi}^{-1}\partial_{H_0}h_{\varphi}|_{H_0}\leq \hat{C}_1,
\end{equation*}
where $\hat{C}_{1}$ is a uniform constant independent on $\varphi$. The perturbed equation (\ref{eq}) and standard elliptic theory give us uniform local higher order estimates. Then, by passing to a subsequence, $H_{\varphi}$ converge in $C_{loc}^{\infty}$ topology  to a metric $H_{\infty}$ which is a solution of the perturbed equation \eqref{eq} on the whole manifold $X$. Therefore we complete the proof of the following theorem.

\begin{thm} \label{sol}
Let $(E,\bar{\partial}_E,\theta)$ be a Higgs bundle with fixed Hermitian metric $H_0$
over the non-compact Gauduchon manifold $(X,g)$ with finite volume. Suppose there exists a non-negative exhaustion function $\phi$ on $X$ and $\sup\limits_X|\Phi(H_0,\theta)|_{H_0}<+\infty$, then for any $\varepsilon>0$, there exists a metric $H$ such that
$$\sqrt{-1}\Lambda_{\omega } (F_{H}+[\theta,\theta^{*_H}])-\lambda \cdot \mathrm{Id}_E+\varepsilon \log (H_0^{-1}H)=0,$$
\begin{equation} \label{eq0404}
\sup_{x\in X}|\log H_0^{-1}H|_{H_0}(x)\leq \frac{1}{\varepsilon}\sup_{X}|\Phi(H_0,\theta)|_{H_0},
\end{equation}
and
\begin{equation} \label{eq0405}
\|\overline{\partial}_{\theta}(\log H^{-1}_0H)\|_{L^2}\leq C(\varepsilon^{-1},\Phi(H_0,\theta),\mathrm{Vol}(X)).
\end{equation}
Furthermore, if the initial metric $H_{0}$ satisfies the  condition (\ref{trace3})
then $\mathrm{tr} \log (H^{-1}_0H)=0$ and $H$ also satisfies the condition (\ref{trace3}).
\end{thm}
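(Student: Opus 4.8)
The plan is to solve the perturbed equation on an exhausting family of compact subdomains and then pass to a limit, following the method of exhaustion. First I would fix an exhausting sequence $\{X_\varphi\}$ of compact sub-domains of $X$ determined by the non-negative exhaustion function $\phi$, and on each $X_\varphi$ invoke Theorem \ref{comthm} to obtain a Hermitian metric $H_\varphi$ solving the Dirichlet problem for the perturbed equation \eqref{eq} with boundary data $H_\varphi|_{\partial X_\varphi}=H_0$. Writing $s_\varphi=\log(H_0^{-1}H_\varphi)$, Theorem \ref{comthm} supplies for each $\varphi$ both the $C^0$ bound $\sup_{X_\varphi}|s_\varphi|_{H_0}\leq \frac{1}{\varepsilon}\max_{X_\varphi}|\Phi(H_0,\theta)|_{H_0}$ and the $L^2$ estimate \eqref{eq0403} for $\overline{\partial}_{\theta}s_\varphi$. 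The crucial observation is that these bounds are \emph{uniform} in $\varphi$: since $\sup_X|\Phi(H_0,\theta)|_{H_0}<+\infty$ we have $\max_{X_\varphi}|\Phi(H_0,\theta)|_{H_0}\leq \sup_X|\Phi(H_0,\theta)|_{H_0}$, and because $X$ has finite volume the constant appearing in \eqref{eq0403} (explicitly bounded in \eqref{eq04021000} by $\tilde C\,\varepsilon^{-1}\sup_X|\Phi|^2\,\mathrm{Vol}$) depends only on $\varepsilon^{-1}$, on the uniform $L^\infty$-bound of $s_\varphi$, and on $\mathrm{Vol}(X_\varphi)\leq \mathrm{Vol}(X)<+\infty$.

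The second step is to upgrade the uniform $C^0$ bound to local higher-order estimates. A static solution of \eqref{eq} is a time-independent solution of the flow \eqref{Flow}, so the uniform $C^0$ bound places us in the hypothesis of Proposition \ref{noncompactc1}. Applying it on any fixed compact $\Omega\subset X$ yields a uniform local $C^1$ bound $\sup_{\Omega}|h_\varphi^{-1}\partial_{H_0}h_\varphi|_{H_0}\leq \hat C_1$ independent of $\varphi$, valid once $\varphi$ is large enough that $\Omega\subset X_\varphi$. Since \eqref{eq} is elliptic and quadratic in the first derivatives of $H$, I would then feed these uniform $C^0$ and $C^1$ bounds into the interior elliptic (Schauder) estimates to obtain uniform local $C^k$ bounds of every order. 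A diagonal argument over an increasing exhaustion by compact sets then extracts a subsequence of $\{H_\varphi\}$ converging in $C^\infty_{loc}$ to a smooth metric $H=H_\infty$, which solves the perturbed equation \eqref{eq} on all of $X$.

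It remains to verify the three asserted properties. The $C^0$ estimate \eqref{eq0404} is inherited directly from the uniform bound above under $C^\infty_{loc}$ convergence. For the $L^2$ estimate \eqref{eq0405}, I would fix a compact $\Omega$, use the smooth local convergence together with the uniform bound $\|\overline{\partial}_{\theta}s_\varphi\|_{L^2(X_\varphi)}\leq C$ to get $\|\overline{\partial}_{\theta}s_\infty\|_{L^2(\Omega)}\leq C$, and then let $\Omega\uparrow X$; finiteness of $\mathrm{Vol}(X)$ and monotone convergence give \eqref{eq0405} on the whole manifold. Finally, when $H_0$ satisfies \eqref{trace3}, Theorem \ref{comthm} gives $\mathrm{tr}(s_\varphi)=0$ and that each $H_\varphi$ satisfies \eqref{trace3}; both are pointwise identities stable under $C^\infty_{loc}$ convergence, so they pass to $H_\infty$.

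The hard part will be securing the uniformity of all the estimates in $\varphi$, since on the non-compact $X$ the limit must both exist and genuinely solve the equation. The two delicate points are: (i) verifying that the constant in the interior $L^2$ estimate \eqref{eq0403} is independent of $\varphi$, which rests on the uniform $L^\infty$-control of $s_\varphi$ entering $\tilde C$ in \eqref{eq04021000} and on $\mathrm{Vol}(X)<+\infty$; and (ii) confirming that the $L^2$ bound on $\overline{\partial}_{\theta}s_\varphi$ descends to the limit, for which lower semicontinuity of the $L^2$ norm under local smooth convergence suffices. Everything else reduces to the compact Dirichlet theory of Theorem \ref{comthm} and the interior $C^1$ estimate of Proposition \ref{noncompactc1}.
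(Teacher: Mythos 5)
Your proposal is correct and follows essentially the same route as the paper: exhaust $X$ by the compact sub-domains $X_\varphi$, solve the Dirichlet problem on each via Theorem \ref{comthm}, observe that the $C^0$ bound and the $L^2$ bound on $\overline{\partial}_{\theta}s_\varphi$ are uniform in $\varphi$ because $\sup_X|\Phi(H_0,\theta)|_{H_0}<+\infty$ and $\mathrm{Vol}(X)<+\infty$, apply Proposition \ref{noncompactc1} for local $C^1$ control, bootstrap with elliptic estimates, and pass to a $C^\infty_{loc}$ limit. Your explicit treatment of how \eqref{eq0404}, \eqref{eq0405} and the trace condition descend to the limit is slightly more detailed than the paper's, but the argument is the same.
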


\section{Proof of the theorems}

Let $(X,g)$ be a non-compact Gauduchon manifold satisfying the Assumptions 1,2,3, and $|\mathrm{d}\omega^{n-1}|_{g}\in L^2(X)$,  $(E,\bar{\partial}_E,\theta)$ be a Higgs bundle over $X$. Fixing a proper background Hermitian metric $K$ satisfying $\sup\limits_X|\Lambda_{\omega } F_{K,\theta}|_{K}<+\infty$ on $E$. By Proposition \ref{poisson}, we can solve the following Poisson equation on $(X, g)$:
\begin{equation*} \label{lastsection4}
\sqrt{-1}\Lambda_{\omega } \bar{\partial}\partial f=
-\frac{1}{r}\textmd{tr}(\sqrt{-1}\Lambda_{\omega } F_{K,\theta}-\lambda_{K,\omega}\cdot \textmd{Id}_E),
\end{equation*}
where
$$
\lambda_{K,\omega}=\frac{\sqrt{-1}\int_X\textmd{tr}(\Lambda_{\omega } F_{K,\theta})
\frac{\omega^{n}}{n!}}{\textmd{rank}(E)\textmd{Vol}(X)}.
$$

By  conformal change $\overline{K}=e^{f}K$, we can check  that $\overline{K}$ satisfies
\begin{equation} \label{lastsection3}
\textmd{tr}( \sqrt{-1}\Lambda_{\omega } (F_{\overline{K}}+[\theta,\theta^{*_{\overline{K}}}])-\lambda_{K,\omega} \cdot \textmd{Id}_E)=0.
\end{equation}
By the definition and properties of $f$, it is easy to check that if $(E,\bar{\partial}_E,\theta)$ is $K$-analytic stable then it must be $\overline{K}$-analytic stable. So, in the following we can assume that the initial metric $K$ satisfies the condition (\ref{lastsection3}).

From Theorem \ref{sol}, we can solve the following perturbed equation
\begin{equation} \label{lastsection1}
L_{\varepsilon}(h_{\varepsilon }):= \sqrt{-1}\Lambda_{\omega } (F_{H_{\varepsilon }}+[\theta,\theta^{*_{H_{\varepsilon }}}])
-\lambda_{K,\omega} \cdot \textmd{Id}_E+\varepsilon \log h_{\varepsilon }=0,
\end{equation}
where $h_{\varepsilon }=K^{-1}H_{\varepsilon }=e^{s_{\varepsilon }}$. Since the initial metric $K$ satisfies the condition (\ref{lastsection3}), then we have
\begin{equation*}\label{trace4}
\log \det (h_{\varepsilon })=\mathrm{tr}(s_{\varepsilon})=0
\end{equation*}
 and
\begin{equation*}
\textmd{tr}( \sqrt{-1}\Lambda_{\omega } (F_{H_{\varepsilon }}+[\theta,\theta^{*_{H_{\varepsilon }}}])
-\lambda_{K,\omega} \cdot \textmd{Id}_E)=0.
\end{equation*}

 \medskip

\begin{lem} \label{lastsectionlemma}
\begin{equation}\label{L1}\sup_{X}|\log h_{\varepsilon}|\leq  C_7\| \log h_{\varepsilon}\|_{L^2(X)}+C_8,\end{equation}
where $C_7$ and $C_8$ are positive constants independent on $\varepsilon$.
\end{lem}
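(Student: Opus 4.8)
The plan is to show that a smooth quantity comparable to $|\log h_{\varepsilon}|$ is a subsolution for $\widetilde{\Delta}$ whose right-hand side is bounded below \emph{uniformly in} $\varepsilon$, and then to feed this into Assumption 3 exactly as in the proof of Proposition \ref{poisson}. Writing $s_{\varepsilon}=\log h_{\varepsilon}$ and letting $\mu_{1},\dots,\mu_{r}$ be the pointwise eigenvalues of $s_{\varepsilon}$, I would work with the smooth function
\[
\tau_{\varepsilon}=\log\big(\mathrm{tr}(h_{\varepsilon})+\mathrm{tr}(h_{\varepsilon}^{-1})\big)=\log\Big(\sum_{i=1}^{r}(e^{\mu_{i}}+e^{-\mu_{i}})\Big),
\]
which is the bundle analogue of the function $\log(e^{f}+e^{-f})$ used in Proposition \ref{poisson}. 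Since $\max_{i}|\mu_{i}|\leq \tau_{\varepsilon}\leq \max_{i}|\mu_{i}|+\log(2r)$ and $\max_{i}|\mu_{i}|$ is comparable to $|s_{\varepsilon}|_{K}=(\sum_{i}\mu_{i}^{2})^{1/2}$ up to a factor depending only on $r$, any bound on $\tau_{\varepsilon}$ transfers to a bound on $\sup_{X}|\log h_{\varepsilon}|$ with constants depending only on $r$; using the smooth $\tau_{\varepsilon}$ also avoids the non-smoothness of $|s_{\varepsilon}|$ when invoking Assumption 3.

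The first step is to establish, by a direct computation in a local frame diagonalizing $s_{\varepsilon}$ (in the spirit of the proofs of Propositions \ref{P1} and \ref{P2}) together with the perturbed equation \eqref{lastsection1}, the differential inequality
\[
\widetilde{\Delta}\tau_{\varepsilon}\geq -C\big(|\Phi(K,\theta)|_{K}+\varepsilon|\log h_{\varepsilon}|_{K}\big),
\]
where $C$ depends only on $n$ and $r$; here the term $\varepsilon\log h_{\varepsilon}$ in \eqref{lastsection1} is exactly the contribution of $\Phi(H_{\varepsilon},\theta)=-\varepsilon s_{\varepsilon}$. The decisive point, which I expect to be the main obstacle, is the uniformity of the lower bound in $\varepsilon$: the quantity $|\Phi(K,\theta)|_{K}$ is bounded by the standing hypothesis $\sup_{X}|\Lambda_{\omega}F_{K,\theta}|_{K}<+\infty$, while the a priori estimate \eqref{eq0404} from Theorem \ref{sol} gives $\varepsilon|\log h_{\varepsilon}|_{K}\leq \sup_{X}|\Phi(K,\theta)|_{K}$. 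A naive estimate (as in Proposition \ref{noncompactc0}) would instead leave a factor $\varepsilon^{-1}$; it is precisely \eqref{eq0404} that absorbs the $\varepsilon\log h_{\varepsilon}$ term and yields $\widetilde{\Delta}\tau_{\varepsilon}\geq -B$, equivalently $\sqrt{-1}\Lambda_{\omega}\partial\bar{\partial}\tau_{\varepsilon}\geq -\tfrac{B}{2}$, with $B=2C\sup_{X}|\Phi(K,\theta)|_{K}$ independent of $\varepsilon$.

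With this in hand, for each fixed $\varepsilon>0$ the function $\tau_{\varepsilon}$ is bounded and non-negative by the $C^{0}$-estimate \eqref{eq0404}, so Assumption 3 applies and yields $\sup_{X}\tau_{\varepsilon}\leq C(B)\,a\!\big(\int_{X}\tau_{\varepsilon}\,\frac{\omega^{n}}{n!}\big)$. Using $a(0)=0$, $a$ increasing and $a(t)=t$ for $t>1$, one has $a(t)\leq 1+t$, and by Cauchy--Schwarz together with the finite volume from Assumption 1,
\[
\int_{X}\tau_{\varepsilon}\,\frac{\omega^{n}}{n!}\leq C_{r}\big(1+\mathrm{Vol}(X,g)^{1/2}\,\|\log h_{\varepsilon}\|_{L^{2}(X)}\big).
\]
Combining these bounds and translating back from $\tau_{\varepsilon}$ to $|\log h_{\varepsilon}|$ through the comparison of the first paragraph gives \eqref{L1} with $C_{7},C_{8}$ depending only on $n$, $r$, $\sup_{X}|\Phi(K,\theta)|_{K}$ and $\mathrm{Vol}(X,g)$, hence independent of $\varepsilon$, as required.
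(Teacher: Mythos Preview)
Your proposal is correct and follows essentially the same route as the paper. The paper works with the same function $\tau_{\varepsilon}=\log(\mathrm{tr}\,h_{\varepsilon}+\mathrm{tr}\,h_{\varepsilon}^{-1})$, invokes \cite[Lemma 3.1(d)]{SIM} to obtain $\widetilde{\Delta}\tau_{\varepsilon}\geq -2(|\Lambda_{\omega}F_{H_{\varepsilon},\theta}|_{H_{\varepsilon}}+|\Lambda_{\omega}F_{K,\theta}|_{K})$, then uses the perturbed equation together with \eqref{eq0404} to see that the right-hand side is bounded uniformly in $\varepsilon$, and finally applies Assumption 3 after recording the same two-sided comparison between $\tau_{\varepsilon}$ and $|\log h_{\varepsilon}|$; the only cosmetic difference is that you substitute \eqref{lastsection1} into the inequality first (so the lower bound reads $-C(|\Phi(K,\theta)|_{K}+\varepsilon|s_{\varepsilon}|_{K})$), whereas the paper keeps both curvatures and bounds $|\Lambda_{\omega}F_{H_{\varepsilon},\theta}|_{H_{\varepsilon}}=\varepsilon|s_{\varepsilon}|_{H_{\varepsilon}}$ afterwards.
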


\begin{proof}
By \protect{\cite[Lemma 3.1 (d)]{SIM}}, we have
\begin{equation*}
\widetilde{\Delta}\log (\textmd{tr} h_{\varepsilon} +\tr h^{-1}_{\varepsilon})\geq -2(|\Lambda_{\omega } F_{H_{\varepsilon},\theta}|_{H_{\varepsilon}}+|\Lambda_{\omega } F_{K,\theta}|_K).
\end{equation*}
From (\ref{eq0404}) and (\ref{lastsection1}), it is easy to check that $|\Lambda_{\omega } F_{H_{\varepsilon},\theta}|_{H_{\varepsilon }}$ is uniformly bounded. On the other hand, we have
\begin{equation*}
\log (\frac{1}{2r}(\mathrm{tr} h_{\varepsilon} + \mathrm{tr} h^{-1}_{\varepsilon}))\leq |\log h_{\varepsilon }|\leq r^{\frac{1}{2}}\log (\mathrm{tr} h_{\varepsilon} + \mathrm{tr} h^{-1}_{\varepsilon}),
\end{equation*}
 Then by Assumption 3, we have (\ref{L1}).
\end{proof}

{\bf Proof of Theorem \ref{theorem1}}

When $(E,\bar{\partial}_E,\theta)$ is $K$-stable, we will show that, by choosing a subsequence, $H_{\varepsilon}$ converge to a Hermitian-Einstein metric $H$ in $C_{loc }^{\infty}$ as $\varepsilon \rightarrow 0$. By the local $C^{1}$-estimates in Proposition \ref{noncompactc1}, the standard elliptic estimates and the identity (\ref{eq04021}) in Proposition \ref{idbundle01}, we only need to obtain a uniform $C^{0}$-estimate.
By Lemma \ref{lastsectionlemma}, the key is to get a uniform $L^{2}$-estimate for $\log h_{\varepsilon}$, i.e. there exists a constant $\hat{C}$ independent of $\varepsilon$, such that
\begin{equation}\label{L102}
\|\log h_{\varepsilon}\|_{L^2}= \int_{X}|\log h_{\varepsilon}|_{H_{\varepsilon}}\frac{\omega^{n}}{n!}\leq \hat{C}
\end{equation}
for all $0<\varepsilon \leq 1$. We prove (\ref{L102}) by contradiction. If not, there would exist a subsequence $\varepsilon_{i}\rightarrow 0$ such that
\begin{equation*}
\|\log h_{\varepsilon_i}\|_{L^2}\rightarrow +\infty.
\end{equation*}
Once we set
$$s_{\varepsilon_i}=\log h_{\varepsilon_i}, \ \ l_i=\|s_{\varepsilon_i}\|_{L^2}, \ \ u_{\varepsilon_i}=\frac{s_{\varepsilon_i}}{l_i},$$
we have
\begin{equation*}
\textmd{tr} (u_{\varepsilon_i})=0, \ \| u_{\varepsilon_i}\|_{L^2}=1.
\end{equation*}
Then combining Lemma \ref{lastsectionlemma}, we also have
\begin{equation} \label{uc0}
\sup\limits_X|u_{\varepsilon_i}|\leq \frac{1}{l_i}(C_7l_i+C_8)<C_{10}<+\infty.
\end{equation}

$\bullet$ $Step \ 1$ \ \ We show that $\| u_{\varepsilon_i}\|_{L^2_1}$ are uniformly bounded. Since $\| u_{\varepsilon_i}\|_{L^2}=1$, we only need to prove $\| \overline{\partial}_{\theta}u_{\varepsilon_i}\|_{L^2}$ are uniformly bounded.

From Theorem \ref{sol} and Proposition \ref{idbundle01}, for each $\varepsilon_i$, we have
\begin{equation} \label{seq1}
\int_X \textmd{tr}(\Phi(K,\theta)u_{\varepsilon_i})\frac{\omega^n}{n!}+l_i\int_{X}\langle \Psi(l_iu_{\varepsilon_i})(\overline{\partial}_{\theta}u_{\varepsilon_i}),\overline{\partial}_{\theta}u_{\varepsilon_i}\rangle_{K}\frac{\omega^n}{n!}=-\varepsilon_il_i.
\end{equation}
Consider the function
\begin{equation*}
l\Psi(lx,ly)=
\begin{cases}
\ \ \ \ l,\ \ &x=y;\\
\frac{e^{l(y-x)}-1}{y-x},\ \ &x\neq y.
\end{cases}
\end{equation*}
From  \eqref{uc0}, we may assume that $(x,y)\in [-C_{10},C_{10}]\times[-C_{10},C_{10}]$. It is easy to check that
\begin{equation} \label{seq2}
l\Psi(lx,ly)\rightarrow
\begin{cases}
(x-y)^{-1},\ \ \ &x>y;\\
\ \ +\infty,\ \ \ &x\leq y,
\end{cases}
\end{equation}
increases monotonically as $l\rightarrow +\infty$.  Let $\varsigma\in C^{\infty} (\mathbb{R}\times \mathbb{R}, \mathbb{R}^+)$ satisfying $\varsigma(x,y)<(x-y)^{-1}$ whenever $x>y$. From Eqs. \eqref{seq1}, \eqref{seq2}  and the arguments in \cite[Lemma 5.4]{SIM}, we have
\begin{equation} \label{seq3}
\int_X \textmd{tr}(\Phi(K,\theta)u_{\varepsilon_i})\frac{\omega^n}{n!}+\int_{X}\langle \varsigma(u_{\varepsilon_i})(\overline{\partial}_{\theta}u_{\varepsilon_i}),\overline{\partial}_{\theta}u_{\varepsilon_i}\rangle_{K}\frac{\omega^n}{n!}\leq 0, \ \ i\gg 0.
\end{equation}
In particular, we take $\zeta(x,y)=\frac{1}{3C_{10}}.$ It is obvious that when  $(x,y)\in [-C_{10},C_{10}]\times[-C_{10},C_{10}]$ and $x>y$, $\frac{1}{3C_2}<\frac{1}{x-y}$. This implies that
$$
\int_X\textmd{tr}(\Phi(K,\theta)u_{\varepsilon_i})\frac{\omega^n}{n!}
+\frac{1}{3C_{10}}\int_{X}|\overline{\partial}_{\theta}(u_{\varepsilon_i})|^2_{K}\frac{\omega^n}{n!}\leq 0,
$$
for $i\gg 0$. Then we have
$$
\int_{X}|\overline{\partial}_{\theta}(u_{\varepsilon_i})|^2_{K}\frac{\omega^n}{n!}\leq 3C^2_{10}\sup\limits_X|\Phi(K,\theta)|_{K}\textmd{Vol}(X).
$$
Thus, $u_{\varepsilon_i}$ are bounded in $L_1^2$. We can choose a subsequence $\{u_{\varepsilon_{i_j}}\}$ such that $u_{\varepsilon_{i_j}}\rightharpoonup u_{\infty}$ weakly in $L^2_1,$  still denoted by $\{u_{\varepsilon_i}\}^{\infty}_{i=1}$ for simplicity. Noting that $L_1^2\hookrightarrow L^2,$ we have
 $$1=\int_X | u_{\varepsilon_i}|^2_{K}\rightarrow \int_X | u_{\infty}|^2_{K}.$$
 This indicates that $\| u_{\infty}\|_{L^2}=1$ and $u_{\infty}$ is non-trivial.
Using  \eqref{seq3} and following a similar discussion as in \cite[Lemma 5.4]{SIM}, we have
\begin{equation} \label{seq4}
\int_X \textmd{tr}(\Phi(K,\theta)u_{\infty})\frac{\omega^n}{n!}+\int_{X}\langle \varsigma(u_{\infty})(\overline{\partial}_{\theta}u_{\infty}),\overline{\partial}_{\theta}u_{\infty}\rangle_{K}\frac{\omega^n}{n!}\leq 0.
\end{equation}

$\bullet$ $Step \ 2$ Using Uhlenbeck and Yau's trick (\cite{UY86}) and Simpson's argument (\cite{SIM}), we construct a Higgs subsheaf which contradicts the stability of $(E,\bar{\partial}_E,\theta)$.

By  \eqref{seq4} and the same argument in \cite[Lemma 5.5]{SIM}, we conclude that the eigenvalues of $u_{\infty}$ are constant almost everywhere. Let $\mu_1<\mu_2<\cdots<\mu_l$ be the distinct eigenvalues of $u_{\infty}$. The facts that $\textmd{tr}(u_{\infty})=0$ and $\|u_{\infty}\|_{L^2}=1$ force $2\leq l\leq r$. For each $\mu_{\alpha} (1\leq \alpha\leq l-1)$, we construct a function $P_{\alpha}: \mathbb{R}\rightarrow \mathbb{R}$ such that
$$
P_{\alpha}=
\begin{cases}
1,\ \ \ x\leq \mu_{\alpha};\\
0,\ \ \ x\geq \mu_{\alpha+1}.
\end{cases}
$$
Setting $\pi_{\alpha}=P_{\alpha}(u_{\infty})$, from \cite[p.887]{SIM}, we have: (i) $\pi_{\alpha}\in L^2_1$; (ii)$\pi_{\alpha}^2=\pi_{\alpha}=\pi_{\alpha}^{*_{K}}$; (iii) $(\textmd{Id}_E-\pi_{\alpha})\bar{\partial}\pi_{\alpha}=0$ and (iv) $(\textmd{Id}_E-\pi_{\alpha})[\theta, \pi_{\alpha}]=0$. By Uhlenbeck and Yau's regularity statement of $L^2_1$-subbundle \cite{UY86}, $\{\pi_{\alpha}\}_{\alpha=1}^{l-1}$ determine $l-1$ Higgs sub-sheaves of $E$. Set $E_{\alpha}=\pi_{\alpha}(E)$. From $\textmd{tr}(u_{\infty})=0$ and $u_{\infty}=\mu_l \cdot \textmd{Id}_E-\sum\limits_{\alpha=1}^{l-1}(\mu_{\alpha+1}-\mu_{\alpha})\pi_{\alpha}$, it holds
\begin{equation} \label{seq5}
\mu_{l}\textmd{rank}(E)=\sum_{\alpha=1}^{l-1}(\mu_{\alpha+1}-\mu_{\alpha})\textmd{rank}(E_{\alpha}).
\end{equation}
Construct
$$\nu=\mu_l \textmd{deg}(E,K)-\sum_{\alpha=1}^{l-1}(\mu_{\alpha+1}-\mu_{\alpha})\textmd{deg}(E_{\alpha},K).$$
Substituting Eq. \eqref{seq5} into $\nu$,
\begin{equation} \label{seq6}
\nu=\sum_{\alpha=1}^{l-1}(\mu_{\alpha+1}-\mu_{\alpha})\textmd{rank}(E_{\alpha})(\frac{\textmd{deg}(E,K)}{\textmd{rank}(E)}
-\frac{\textmd{deg}(E_{\alpha},K)}{\textmd{rank}(E_{\alpha})}).
\end{equation}
On the other hand, substituting Eq. \eqref{cw} into $\nu$ we have
\begin{align}
\nu&=\mu_l\int_X \sqrt{-1}\textmd{tr}(\Lambda_{\omega } F_{K,\theta})
-\sum_{\alpha=1}^{l-1}(\mu_{\alpha+1}-\mu_{\alpha})\{\int_X \sqrt{-1}\textmd{tr}(\pi_{\alpha}\Lambda_{\omega } F_{K,\theta})-|\overline{\partial}_{\theta}\pi_{\alpha}|^2\}\notag\\
&=\int_X \textmd{tr}\{(\mu_l\cdot \textmd{Id}_E-\sum_{\alpha=1}^{l-1}(\mu_{\alpha+1}-\mu_{\alpha})\pi_{\alpha})(\sqrt{-1}\Lambda_{\omega } F_{K,\theta})\}+\sum_{\alpha=1}^{l-1}(\mu_{\alpha+1}-\mu_{\alpha})\int_X|\overline{\partial}_{\theta}\pi_{\alpha}|^2\notag\\
&=\int_X \textmd{tr}(u_{\infty}\sqrt{-1}\Lambda_{\omega } F_{K,\theta})
+\langle \sum_{\alpha=1}^{l-1}(\mu_{\alpha+1}-\mu_{\alpha})(\textmd{d} P_{\alpha})^2(u_{\infty})(\overline{\partial}_{\theta}u_{\infty}),\overline{\partial}_{\theta}u_{\infty}\rangle_{K},\notag
\end{align}
where the function $\textmd{d} P_{\alpha}: \mathbb{R}\times\mathbb{R}\rightarrow \mathbb{R}$ is defined by
$$
\textmd{d} P_{\alpha}(x,y)=
\begin{cases}
\frac{P_{\alpha}(x)-P_{\alpha}(y)}{x-y}, \ \ x\neq y;\\
P'_{\alpha}(x), \ \ x=y.
\end{cases}
$$
One can easily check that,
\begin{equation*}
\sum_{\alpha=1}^{l-1}(\mu_{\alpha+1}-\mu_{\alpha})(\textmd{d} P_{\alpha})^2(\mu_{\beta},\mu_{\gamma})=|\mu_{\beta}-\mu_{\gamma}|^{-1},
\end{equation*}
if $\mu_{\beta}\neq \mu_{\gamma}$. Then using \eqref{seq4}, we have
\begin{equation} \label{seq7}
\begin{split}
\nu&=\int_X\textmd{tr}(u_{\infty}\sqrt{-1}\Lambda_{\omega } F_{K,\theta})
+\langle \sum_{\alpha=1}^{l-1}(\mu_{\alpha+1}-\mu_{\alpha})(\textmd{d} P_{\alpha})^2(u_{\infty})(\overline{\partial}_{\theta}u_{\infty}),\overline{\partial}_{\theta}u_{\infty}\rangle_{K}\\
&\leq 0.
\end{split}
\end{equation}
Combining \eqref{seq6} and \eqref{seq7}, we have
$$\sum_{\alpha=1}^{l-1}(\mu_{\alpha+1}-\mu_{\alpha})\textmd{rank}(E_{\alpha})(\frac{\textmd{deg}(E,K)}{\textmd{rank}(E)}
-\frac{\textmd{deg}(E_{\alpha},K)}{\textmd{rank}(E_{\alpha})})\leq 0,$$
which contradicts the stability of $E$.

\hfill $\Box$ \\

In the following, we will prove that the semi-stability implies the existence of approximate Hermitian-Einstein structure.

\medskip

{\bf Proof of Theorem \ref{theorem2}}

We only need to prove the following claim.

\medskip

\textbf{Claim}  If $(E,\bar{\partial}_E, \theta)$ is semi-stable, then it holds
$$
\lim\limits_{\varepsilon\rightarrow 0}\sup\limits_X |\sqrt{-1}\Lambda_{\omega } F_{H_{\varepsilon},\theta}-\lambda_{K,\omega} \cdot \textmd{Id}_E|_{H_{\varepsilon}}=\lim\limits_{\varepsilon\rightarrow 0}\varepsilon \sup \limits_X | \log h_{\varepsilon}|_{H_{\varepsilon}}=0.
$$

\begin{proof}

If the claim does not hold, then there exist $\delta>0$ and a subsequence $\varepsilon_i\rightarrow 0, i\rightarrow+\infty$, such that
\begin{equation} \label{claim1}
\sup\limits_X|\sqrt{-1}\Lambda_{\omega } F_{H_{\varepsilon_i},\theta}-\lambda_{K,\omega} \cdot \textmd{Id}_E|_{H_{\varepsilon_i}}=\varepsilon_i\sup_X|\log h_{\varepsilon_i}|_{H_{\varepsilon_i}}\geq \delta,
\end{equation}
for any $\varepsilon_{i}$, and
\begin{equation*}
\| \log h_{\varepsilon_i}\|_{L^2}\rightarrow +\infty.
\end{equation*}
Setting
$$s_{\varepsilon_i}=\log h_{\varepsilon_i}, \ \ l_i=\| s_{\varepsilon_i}\|_{L^2}, \ \ u_{\varepsilon_i}=\frac{s_{\varepsilon_i}}{l_i},$$ we have
$$\textmd{tr} (u_{\varepsilon_i})=0, \ \| u_{\varepsilon_i}\|_{L^2}=1.$$
By \eqref{claim1} and Lemma \ref{lastsectionlemma}, we have
\begin{equation} \label{claim2}
l_i\geq \frac{\delta}{\varepsilon_iC_7}-\frac{C_8}{C_7}
\end{equation}
and
\begin{equation*} \label{claim3}
\sup\limits_X|u_{\varepsilon_i}|\leq \frac{1}{l_i}(C_7l_i+C_8)<C_{10}<+\infty.
\end{equation*}
By (\ref{seq1}) and \eqref{claim2}, we have
\begin{equation} \label{claim4}
\frac{\delta}{C_7}+\int_X \textmd{tr}(\Phi(K,\theta)u_{\varepsilon_i})\frac{\omega^n}{n!}+l_i\int_{X}\langle \Psi(l_iu_{\varepsilon_i})(\overline{\partial}_{\theta}u_{\varepsilon_i}),\overline{\partial}_{\theta}u_{\varepsilon_i}\rangle_{K}\frac{\omega^n}{n!}\leq \varepsilon_i\frac{C_8}{C_7}.
\end{equation}
By \eqref{claim4}  and the arguments in \cite[Lemma 5.4]{SIM}, we have
\begin{equation} \label{claim6}
\frac{\delta}{2C_7}+\int_X \textmd{tr}(\Phi(K,\theta)u_{\varepsilon_i})\frac{\omega^n}{n!}+\int_{X}\langle \varsigma(u_{\varepsilon_i})(\overline{\partial}_{\theta}u_{\varepsilon_i}),\overline{\partial}_{\theta}u_{\varepsilon_i}\rangle_{K}\frac{\omega^n}{n!}\leq 0, \ \ i\gg 0.
\end{equation}

By the same argument as that in $Step \ 1$ in the proof of Theorem \ref{theorem1}, we can prove that  $\|\overline{\partial}_{\theta}u_{\varepsilon_i}\|_{L^2}$ are uniformly bounded.
By choosing a subsequence, we have  $u_{\varepsilon_{i}}\rightharpoonup u_{\infty}$ weakly in $L^2_1$,  and $\| u_{\infty}\|_{L^2}=1$. Using Eq. \eqref{claim6} and following a similar discussion as in \cite[Lemma 5.4, Lemma 5.5]{SIM}, we have
\begin{equation} \label{claim7}
\frac{\delta}{2C_7}+\int_X \textmd{tr}(\Phi(K,\theta)u_{\infty})\frac{\omega^n}{n!}+\int_{X}\langle \varsigma(u_{\infty})(\overline{\partial}_{\theta}u_{\infty}),\overline{\partial}_{\theta}u_{\infty}\rangle_{K}\frac{\omega^n}{n!}\leq 0.
\end{equation}
and $u_{\infty}=\mu_l\cdot \textmd{Id}_E-\sum\limits_{\alpha=1}^{l-1}(\mu_{\alpha+1}-\mu_{\alpha})\pi_{\alpha}$, where $\mu_1<\mu_2<\cdots<\mu_l$, $\{\pi_{\alpha}\}_{\alpha=1}^{l-1}$ determine $l-1$ Higgs sub-sheaves $\{E_{\alpha}\}_{\alpha=1}^{l-1}:=\{\pi_{\alpha}(E)\}_{\alpha=1}^{l-1}$ of $E$.

 By  \eqref{claim7} and the same arguments in \cite[p.793-794]{LZ}, we have
\begin{equation*} \label{uyr5}
\begin{split}
\nu &=\sum_{\alpha=1}^{l-1}(\mu_{\alpha+1}-\mu_{\alpha})\textmd{rank}(E_{\alpha})(\frac{\textmd{deg}(E,K)}{\textmd{rank}(E)}
-\frac{\textmd{deg}(E_{\alpha},K)}{\textmd{rank}(E_{\alpha})})\\
&=\int_X\textmd{tr}(u_{\infty}\sqrt{-1}\Lambda_{\omega } F_{K,\bar{\partial}_E,\theta})
+\langle \sum_{\alpha=1}^{l-1}(\mu_{\alpha+1}-\mu_{\alpha})(\textmd{d} P_{\alpha})^2(u_{\infty})(\overline{\partial}_{\theta}u_{\infty}),\overline{\partial}_{\theta}u_{\infty}\rangle_{K}\\
&\leq -\frac{\delta}{2C_7},
\end{split}
\end{equation*}
which contradicts the semi-stability of $(E,\bar{\partial}_E, \theta)$. This completes the proof of the claim.
\end{proof}

\vskip 1 true cm


\bigskip
\bigskip

\noindent {\footnotesize {\it Chuanjing Zhang, Pan Zhang and Xi Zhang} \\
{School of Mathematical Sciences, University of Science and Technology of China}\\
{Anhui 230026, P.R. China}\\
{Email:chjzhang@mail.ustc.edu.cn; panzhang@mail.ustc.edu.cn; mathzx@ustc.edu.cn}

\vskip 0.5 true cm

\end{document}